\newcommand{\Y}{\boldsymbol{Y}}
\newcommand{\y}{\boldsymbol{y}}
\newcommand{\X}{\boldsymbol{X}}
\newcommand{\Lb}{\boldsymbol{L}}
\newcommand{\ZZ}{\boldsymbol{Z}}
\newcommand{\W}{\boldsymbol{W}}
\newcommand{\NN}{\boldsymbol{N}}
\newcommand{\E}{\mathbb{E}}
\newcommand{\Pb}{\mathbb{P}}
\newcommand{\N}{\mathbb{N}}
\newcommand{\Z}{\mathbb{Z}}
\newcommand{\R}{\mathbb{R}}
\newcommand{\C}{\mathbb{C}}
\renewcommand{\SS}{\mathbb{S}}
\newcommand{\beps}{\boldsymbol{\varepsilon}}
\newcommand{\dd}{\mathrm{d}}
\newcommand{\ee}{\mathrm{e}}
\newcommand{\ii}{\mathrm{i}}
\newcommand{\DD}{\mathrm{D}}
\newcommand{\imag}{\operatorname{Im}}
\newcommand{\im}{\operatorname{im}}
\newcommand{\I}{{\bf 1}}
\newcommand{\bu}{\boldsymbol{u}}
\newcommand{\bx}{\boldsymbol{x}}
\newcommand{\bb}{\boldsymbol{b}}
\newcommand{\bc}{\boldsymbol{c}}
\newcommand{\bzero}{\boldsymbol{0}}
\newcommand{\Cov}{\operatorname{\mathbb{C}ov}}
\newcommand{\Var}{\operatorname{\mathbb{V}ar}}
\newcommand{\bth}{\boldsymbol{\vartheta}}
\newcommand{\bnu}{\boldsymbol{\nu}}
\newcommand{\BSO}{\operatorname{B}}
\newcommand{\argmin}{\operatorname{argmin}}
\newcommand{\ET}{Ergodic Theorem}
\newcommand{\CLT}{Central Limit Theorem}
\newcommand{\SigGauss}{\Sigma^{\mathcal{G}}}
\newcommand{\SigL}{\Sigma^{\Lb}}
\renewcommand{\vec}{\operatorname{vec}}
\newcommand{\vech}{\operatorname{vech}}
\newcommand{\tr}{\operatorname{tr}}
\newcommand{\convd}{\xrightarrow{d}}
\newcommand{\bgamma}{\boldsymbol{\gamma}}
\newcommand{\bgammaL}{\bgamma^{\Lb}}
\newcommand{\nuL}{\nu^{\Lb}}
\newcommand{\formulaplural}{formulae}
\renewcommand{\leq}{\leqslant}
\renewcommand{\geq}{\geqslant}
\numberwithin{equation}{section}
\theoremstyle{plain}
\newtheorem{theorem}{Theorem}
\newtheorem{lemma}[theorem]{Lemma}
\newtheorem{proposition}[theorem]{Proposition}
\newtheorem{corollary}[theorem]{Corollary}
\numberwithin{theorem}{section}
\theoremstyle{definition}
\newtheorem{definition}{Definition}
\numberwithin{definition}{section}
\newcounter{assumptionlevy}
\newtheorem{assumptionlevy}[assumptionlevy]{Assumption}
\newcounter{assumptioneigen}
\newtheorem{assumptioneigen}[assumptioneigen]{Assumption}
\numberwithin{assumptioneigen}{section}
\newcounter{assumption}
\numberwithin{assumption}{section}
\newcounter{assumptionparaD}[section]
\newtheorem{assumptionparaD}[assumptionparaD]{Assumption}
\newcounter{assumptionparaC}[section]
\newtheorem{assumptionparaC}[assumptionparaC]{Assumption}
\crefname{equation}{Eq.}{Eqs.}
\Crefname{equation}{Equation}{Equations}
\crefname{assumption}{Assumption}{Assumptions}
\crefname{assumptionparaD}{Assumption}{Assumptions}
\crefname{assumptionparaC}{Assumption}{Assumptions}
\crefname{assumptioneigen}{Assumption}{Assumptions}
\crefname{assumptionlevy}{Assumption}{Assumptions}
\crefname{pluralequation}{Eqs.}{Eqs.}
\begin{document}

\begin{frontmatter}

\title{Quasi maximum likelihood estimation for strongly mixing state space models and multivariate L\'evy\hyp{}driven CARMA processes}
\runtitle{QML estimation for strongly mixing state space models and MCARMA processes}

\begin{aug}
  \author{Eckhard Schlemm\corref{}\ead[label=e1]{es555@cam.ac.uk}}
  \and
  \author{Robert Stelzer\ead[label=e2]{robert.stelzer@uni-ulm.de}}

  \address{Wolfson College, University of Cambridge,\\ 
           \printead{e1}}

  \ead[label=u2,url]{http://www.uni-ulm.de/mawi/finmath}
  \address{Institute of Mathematical Finance, Ulm University,\\
          \printead{e2,u2}}


  \runauthor{E. Schlemm and R. Stelzer}
\end{aug}

\begin{abstract}
We consider quasi maximum likelihood (QML) estimation for general non\hyp{}Gaussian discrete\hyp{}time linear state space models and equidistantly observed multivariate L\'evy\hyp{}driven continuous\hyp{}time autoregressive moving average (MCARMA) processes. In the discrete\hyp{}time setting, we prove strong consistency and asymptotic normality of the QML estimator under standard moment assumptions and a strong-mixing condition on the output process of the state space model. In the second part of the paper, we investigate probabilistic and analytical properties of equidistantly sampled continuous\hyp{}time state space models and apply our results from the discrete\hyp{}time setting to derive the asymptotic properties of the QML estimator of discretely recorded MCARMA processes. Under natural identifiability conditions, the estimators are again consistent and asymptotically normally distributed for any sampling frequency. We also demonstrate the practical applicability of our method through a simulation study and a data example from econometrics.
\end{abstract}


\begin{keyword}[class=AMS]
\kwd[Primary ]{62F10}
\kwd{62F12}
\kwd{62M09}
\kwd[; secondary ]{60G51}
\kwd{60G10}
\end{keyword}

\begin{keyword}
\kwd{asymptotic normality}
\kwd{linear state space model}
\kwd{multivariate CARMA process}
\kwd{quasi maximum likelihood estimation}
\kwd{strong consistency}
\kwd{strong mixing}
\end{keyword}

\end{frontmatter}

\section{Introduction}
Linear state space models have been used in time series analysis and stochastic modelling for many decades because of their wide applicability and analytical tractability \citep[see, e.\,g.,][for a detailed account]{brockwell1991tst,hamilton1994tsa}. In discrete time they are defined by the equations
\begin{equation}
\label{eq-ssmintro}
 \X_n =F\X_{n-1}+\ZZ_{n-1},\quad\Y_n = H\X_n + \W_n,\quad n\in\Z,
\end{equation}
where $\X=\left(\X_n\right)_{n\in\Z}$ is a latent state process, $F$, $H$ are coefficient matrices and, $\ZZ=\left(\ZZ_n\right)_{n\in\Z}$, $\W=\left(\W_n\right)_{n\in\Z}$ are sequences of random variables, see \cref{def-ssmgen} for a precise formulation of this model. In this paper we investigate the problem of estimating the coefficient matrices $F,H$ as well as the second  moments of $\ZZ$ and $\W$ from a sample of observed values of the output process $\Y=\left(\Y_n\right)_{n\in\Z}$, using a quasi maximum likelihood (QML) or generalized least squares approach. Given the importance of this problem in practice, it is surprising that a proper mathematical analysis of the QML estimation for the model \labelcref{eq-ssmintro} has only been performed in cases where the model is in the so-called innovations form
\begin{equation}
\label{eq-ssminnoform}
\X_n =F\X_{n-1}+K\beps_{n-1},\quad\Y_n = H\X_n + \beps_n,\quad n\in\Z,
\end{equation}
where the innovations $\beps$ 
have constant conditional variance and satisfy some higher order moment conditions \citep[Chapter 4]{hannan1987stl}. This includes state space models in which the noise sequences $\ZZ,\W$ are Gaussian, because then the innovations, which are uncorrelated by definition, form an i.\,i.\,d.\ sequence. Restriction to these special cases excludes, however, the state space representations of aggregated linear processes, as well as of equidistantly observed continuous\hyp{}time linear state space models. 

In the first part of the present paper we shall prove consistency (Theorem \ref{theorem-consistency}) and asymptotic normality (Theorem \ref{theorem-hatbthLCLT}) of the QML estimator for the general linear state space model \labelcref{eq-ssmintro} under the assumptions that the noise sequences $\ZZ,\W$ are ergodic, and that the output process $\Y$ satisfies a strong\hyp{}mixing condition in the sense of \citet{rosenblatt1956central}. This assumption is not very restrictive, and is, in particular, satisfied if the noise sequence $\ZZ$ is i.\,i.\,d.\ with an absolutely continuous component, and $\W$ is strongly mixing. Our results are a multivariate generalization of \citet{francq1998}, who considered the QML estimation for univariate strongly mixing ARMA processes. The very recent paper \citet{mainassara2011estimating}, which deals with the structural estimation of weak vector ARMA processes, instead makes a mixing assumption about the innovations sequence $\beps$ of the process under consideration, which is very difficult to verify for state space models; their results can therefore not be used for the estimation of general discretely\hyp{}observed linear continuous\hyp{}time state space models.

As alluded to above, one advantage of relaxing the assumption of i.\,i.\,d.\ innovations in a discrete\hyp{}time state space model is the inclusion of sampled continuous\hyp{}time state space models. These were introduced in the form of continuous\hyp{}time ARMA (CARMA) models in \citet{doob1944egp} as stochastic processes satisfying the formal analogue of the familiar autoregressive moving average equations of discrete\hyp{}time ARMA processes, namely
\begin{equation}
\label{eq-carmadoob}
a(\DD)Y(t) = b(\DD)\DD W(t),\quad\DD=\dd/\dd t,
\end{equation}
where $a$ and $b$ are suitable polynomials, and $W$ denotes a Brownian motion. In the recent past, a considerable body of research has been devoted to these processes.
One particularly important extension of the model \labelcref{eq-carmadoob} was introduced in \citet{brockwell2001levy}, where the driving Brownian motion was replaced by a L\'evy process with finite logarithmic moments. This allowed for a wide range of possibly heavy-tailed marginal distribution of the process $Y$ as well as the occurrence of jumps in the sample paths, both characteristic features of many observed time series, e.\,g.\ in finance \citep{cont2001empirical}. Recently, \citet{marquardt2007multivariate} further generalized \cref{eq-carmadoob} to the multivariate setting, which gave researchers the possibility to model several dependent time series jointly by one linear continuous\hyp{}time process. This extension is important, because many time series, exhibit strong dependencies and can therefore not be modelled adequately on an individual basis. In that paper, the multivariate non\hyp{}Gaussian equivalent of \cref{eq-carmadoob}, namely $P(\DD)\Y(t) = Q(\DD)\DD \Lb(t)$, for matrix\hyp{}valued polynomials $P$ and $Q$ and a L\'evy process $\Lb$, was interpreted by spectral techniques as a continuous\hyp{}time state space model of the form
\begin{equation}
\label{eq-ssmintroCT}
\dd \boldsymbol{G}(t)=\mathcal{A}\boldsymbol{G}(t)\dd t+\mathcal{B} \dd \Lb(t),\quad \Y(t)= \mathcal{C}\boldsymbol{G}(t);
\end{equation}
see \cref{eq-MCARMAcoeffABC-QML} for an expression of the matrices $\mathcal{A}$, $\mathcal{B}$ and $\mathcal{C}$. The structural similarity between \cref{eq-ssmintro} and \cref{eq-ssmintroCT} is apparent, and it is essential for many of our arguments. Taking a different route, multivariate CARMA processes can be defined as the continuous\hyp{}time analogue of discrete\hyp{}time vector ARMA models, described in detail in \citet{hannan1987stl}.
As continuous\hyp{}time processes, CARMA processes are suited particularly well to model irregularly spaced and high-frequency data, which makes them a flexible and efficient tool for building stochastic models of time series arising in the natural sciences, engineering and finance \citep[e.\,g.][]{benth2009dynamic,todorov2006simulation}.
In the univariate Gaussian setting, several different approaches to the estimation problem of CARMA processes have been investigated \citep[see, e.\,g.,][and references therein]{larsson2006oip}.
Maximum likelihood estimation based on a continuous record was considered in \citet{feigin1976mle,pham1977estimation,brown1975alt}. Due to the fact that processes are typically not observed continuously and the limitations of digital computer processing, inference based on discrete observations has become more important in recent years; these approaches include variants of the Yule--Walker algorithm for time\hyp{}continuous autoregressive processes \citep{hyndman1993ywe}, maximum likelihood methods \citep{brockwell2010estimation},
and randomized sampling \citep{rivoira2002rtc} to overcome the aliasing problem. Alternative methods include discretization of the differential operator \citep{soderstrom1997sau}, and spectral estimation \citep{gillberg2009fdi,lii1995srs}. For the special case of Ornstein--Uhlenbeck processes, least squares and moment estimators have also been investigated without the assumptions of Gaussianity \citep{hu2009leastsquares,spiliopoulos2008mme}.

In the second part of this paper we consider the estimation of general multivariate CARMA (MCARMA) processes with finite second moments based on equally spaced discrete observations exploiting the results about the QML estimation of general linear discrete\hyp{}time state space models. Under natural identifiability assumptions we obtain in the main Theorem \ref{theorem-CLTmcarma} strongly consistent and asymptotically normal estimators for the coefficient matrices of a second\hyp{}order MCARMA process and the covariance matrix of the driving L\'evy process, which determine the second\hyp{}order structure of the process. It is a natural restriction of the QML method that distributional properties of the driving L\'evy process which are not determined by its covariance matrix cannot be estimated. However, once the autoregressive and moving average coefficients of a CARMA process are (approximately) known, and if high-frequency observations are available, a parametric model for the driving L\'evy process can be estimated by the methods described in \citet{brockwell2011parametric}. Thus it should be noted that the paper \citet{brockwell2011parametric} considers the same model, but whereas the present paper considers the estimation of the autoregressive and moving average parameters from equidistant observations letting the number of observations go to infinity, \citet{brockwell2011parametric} assume that the autoregressive and moving average parameters are known and show how to estimate the driving L\'evy process and its parameters when both the observation frequency and the time horizon go to infinity. A further related paper is 
\cite{schlemmmixing2010} whose result on the equivalence of MCARMA processes and state space models provides the foundations for the estimation procedure considered here. That paper also aimed at using the results of \cite{mainassara2011estimating} directly to estimate the autoregressive and moving average parameters of an MCARMA process and therefore provided conditions for the noise of the induced discrete time state space model to be strongly mixing. However, when we investigated this route further it turned out that the approach we take in the present paper is more general and far more convenient, since any stationary discretely sampled MCARMA process with finite second moments is strongly mixing, whereas  assumptions ensuring a non-trivial absolutely continuous component    of the noise are needed to be able to use the results of \cite{mainassara2011estimating}. Hence, the approach taken in the present paper appears rather natural for MCARMA processes. Finally, we note that the estimation of the spectral density of univariate CARMA processes and the estimation in the case of an infinite variance has recently been considered in \cite{FasenFuchs2012a,FasenFuchs2012b}, and that \cite{Fasen2012} looks at the behaviour of the sample autocovariance function of discretely observed MCARMA processes in a high frequency limit.

\paragraph*{\bf Outline of the paper}
The organization of the paper is as follows. In \cref{section-QMLDTSSM} we develop a QML estimation theory for general non\hyp{}Gaussian discrete\hyp{}time linear stochastic state space models with finite second moments.  In \cref{section-notationQMLDTSSM} we precisely define the class of linear stochastic state space models as well as the QML estimator. The main results, that  under a set of technical conditions this estimator is strongly consistent and asymptotically normally distributed as the number of observations tends to infinity, are given as \cref{theorem-consistency,theorem-hatbthLCLT} in Section \ref{section-resultsDTSSM}. The following two Sections \labelcref{section-QMLDTSSMconsistency,section-QMLDTSSMnormality} present the proofs.

In \cref{section-QMLMCARMA} we use the results from \cref{section-QMLDTSSM} to establish asymptotic properties of a QML estimator for multivariate CARMA processes which are observed on a fixed equidistant time grid. As a first step, we review in \cref{section-MCARMA-QML} their definition as well as their relation to the class of continuous\hyp{}time state space models. This is followed by an investigation of the probabilistic properties of a sampled MCARMA process in \cref{section-sampling} and an analysis of the important issue of identifiability in \cref{section-identifiability}. Finally, we are able to state and prove our main result, \cref{theorem-CLTmcarma}, about the strong consistency and asymptotic normality of the QML estimator for equidistantly sampled multivariate CARMA processes in \cref{section-asymptoticmcarma}.

In the final \cref{section-practicalapplicability}, we present canonical parametrizations, and we demonstrate the applicability of the QML estimation for continuous\hyp{}time state space models with a simulation study.
\paragraph*{\bf Notation}
We use the following notation: The space of $m\times n$ matrices with entries in the ring $\mathbb{K}$ is denoted by $M_{m,n}(\mathbb{K})$ or $M_{m}(\mathbb{K})$ if $m=n$. The set of symmetric matrices is denoted by $\SS_m(\mathbb{K})$, and the symbols $\SS_m^+(\R)$ ($\SS_m^{++}(\R)$) stand for the subsets of positive semidefinite (positive definite) matrices, respectively. $A^T$ denotes the transpose of the matrix A, $\im A$ its image, $\ker A$ its kernel, $\sigma(A)$ its spectrum, and $\I_m\in M_m(\mathbb{K})$ is the identity matrix. The vector space $\R^m$ is identified with $M_{m,1}(\R)$ so that $\bu=(u^1,\ldots,u^m)^T\in\R^m$ is a column vector. $\left\|\cdot\right\|$ represents the Euclidean norm, $\langle\cdot,\cdot\rangle$ the Euclidean inner product, and $\bzero_m\in\R^m$ the zero vector. $\mathbb{K}[X]$ ($\mathbb{K}\{X\}$) denotes the ring of polynomial (rational) expressions in X over $\mathbb{K}$, $I_B(\cdot)$ the indicator function of the set $B$, and $\delta_{n,m}$ the Kronecker symbol. The symbols $\E$, $\Var$, and $\Cov$ stand for the expectation, variance and covariance operators, respectively. Finally, we write $\partial_m$ for the partial derivative operator with respect to the $m$th coordinate and $\nabla=\left(\begin{array}{ccc}\partial_1 & \cdots & \partial_r\end{array}\right)$ for the gradient operator. When there is no ambiguity, we use  $\partial_m f(\bth_0)$ and $\nabla_{\bth}f(\bth_0)$ as shorthands for  $\partial_m f(\bth)|_{\bth=\bth_0}$ and $\nabla_{\bth}f(\bth)|_{\bth=\bth_0}$, respectively. A generic constant, the value of which may change from line to line, is denoted by $C$.

\section{Quasi maximum likelihood estimation for  state space models}
\label{section-QMLDTSSM}
In this section we investigate QML estimation for general linear state space models in discrete time, and prove consistency and asymptotic normality. On the one hand, due to the wide applicability of state space systems in stochastic modelling and control, these results are interesting and useful in their own right. In the present paper they will be applied in \cref{section-QMLMCARMA} to prove asymptotic properties of the QML estimator for discretely observed multivariate continuous\hyp{}time ARMA processes.

Our theory extends existing results from the literature, in particular concerning the QML estimation of Gaussian state space models, of state space models with independent innovations \citep{hannan1975eam},
and of weak univariate ARMA processes which satisfy a strong mixing condition \citep{francq1998}. The techniques used in this section are similar to \citet{mainassara2011estimating}.
\subsection{Preliminaries and definition of the QML estimator}
\label{section-notationQMLDTSSM}

The general linear stochastic state space model is defined as follows.
\begin{definition}
\label{def-ssmgen}
An $\R^d$\hyp{}valued discrete\hyp{}time linear stochastic state space mo\-del $(F,H,\ZZ,\W)$ of dimension $N$ is characterized by a strictly stationary $\R^{N+d}$\hyp{}valued sequence $\left(\begin{array}{cc}\ZZ^T & \W^T\end{array}\right)^T$ with mean zero and finite covariance matrix
\begin{equation}
\label{eq-def-ssmgencov}
\E\left[\left(\begin{array}{c}\ZZ_n\\\W_n\end{array}\right)\left(\begin{array}{cc}\ZZ_m^T & \W_m^T\end{array}\right)\right]=\delta_{m,n}\left(\begin{array}{cc}Q & R \\R^T & S\end{array}\right),\quad n,m\in\Z,
\end{equation}
for some matrices $Q\in \SS^+_N(\R)$, $S\in \SS^+_d(\R)$, and $R\in M_{N,d}(\R)$; a state transition matrix $F\in M_N(\R)$; and an observation matrix $H\in M_{d,N}(\R)$. It consists of a state equation
\begin{subequations}
\label[pluralequation]{eq-ssmDT}
\begin{equation}
\label{eq-eq-stateeq-QMLDT}
 \X_n =F\X_{n-1}+\ZZ_{n-1},\quad n\in\Z,
 \end{equation}
and an observation equation
\begin{equation}
\label{eq-obseqDT}
\Y_n = H\X_n+\W_n,\quad n\in\Z.
\end{equation}
\end{subequations}
The $\R^N$\hyp{}valued autoregressive process $\X=(\X_n)_{n\in\Z}$ is called the {\it state vector process}, and $\Y=(\Y_n)_{n\in\Z}$ is called the {\it output process}.
\end{definition}
The assumption that the processes $\ZZ$ and $\W$ are centred is not essential for our results, but simplifies the notation considerably. Basic properties of the output process $\Y$ are described in \citet[\S 12.1]{brockwell1991tst}; in particular, if the eigenvalues of $F$ are less than unity in absolute value, then $\Y$ has the moving average representation
\begin{equation}
\label{eq-MArepY}
\Y_n = \W_n + H \sum_{\nu=1}^\infty{F^{\nu-1}\ZZ_{n-\nu}},\quad n\in\Z.
\end{equation}

Before we turn our attention to the estimation problem for this class of state space models, we review the necessary aspects of the theory of Kalman filtering, see \citet{kalman1960new} for the original control-theoretic account and \citet[\S 12.2]{brockwell1991tst} for a treatment in the context of time series analysis. The linear innovations of the output process $\Y$ are of particular importance for the QML estimation of state space models.
\begin{definition}
\label{definition-innovations}
Let $\Y=(\Y_n)_{n\in\Z}$ be an $\R^d$\hyp{}valued stationary stochastic process with finite second moments. The {\it linear innovations} $\beps=(\beps_n)_{n\in\Z}$ of $\Y$ are then defined by
\begin{equation}
\label{DefInno-QML}
\beps_n=\Y_n-P_{n-1}\Y_n,\quad P_n=\text{orthogonal projection onto } \overline{\operatorname{span}}\left\{\Y_\nu:-\infty<\nu\leq n\right\},
\end{equation}
where the closure is taken in the Hilbert space of square\hyp{}integrable random variables with inner product $(X,Y)\mapsto \E \langle X,Y\rangle$.
\end{definition}
This definition immediately implies that the innovations $\beps$ of a stationary stochastic process $\Y$ are stationary and uncorrelated. The following proposition is a combination of \citet[Proposition 12.2.3]{brockwell1991tst} and \citet[Proposition 13.2]{hamilton1994tsa}.
\begin{proposition}
\label{prop-Kalmanfilter}
Assume that $\Y$ is the output process of the state space model \labelcref{eq-ssmDT}, that at least one of the matrices $Q$ and $S$ is positive definite, and that the absolute values of the eigenvalues of $F$ are less than unity. Then the following hold.
\begin{enumerate}[i)]
 \item\label{prop-KalmanfilterDARE} The discrete\hyp{}time algebraic Riccati equation
\begin{equation}
\label{eq-DefOmega}
\Omega = F\Omega F^T+Q-\left[F\Omega H^T+R\right]\left[H\Omega H^T+S\right]^{-1}\left[F\Omega H^T+R\right]^T
\end{equation}
has a unique positive semidefinite solution $\Omega\in\SS^+_N(\R)$.
 \item\label{prop-KalmanfilterGain} The absolute values of the eigenvalues of the matrix $F-KH\in M_N(\R)$ are less than one, where
\begin{equation}
\label{eq-DefK-QML}
K =\left[F\Omega H^T+R\right]\left[H\Omega H^T+S\right]^{-1}\in M_{N,d}(\R)
\end{equation}
is the steady-state Kalman gain matrix.
 \item\label{prop-KalmanfilterMAbeps}  The linear innovations $\beps$ of $\Y$ are the unique stationary solution to
\begin{subequations}
\label[pluralequation]{eq-innoSSMcombined}
\begin{equation}
\label{eq-innoSSM}
\hat\X_n = \left(F-KH\right)\hat\X_{n-1}+K\Y_{n-1},\quad \beps_n = \Y_n - H\hat\X_n,\quad n\in\Z.
\end{equation}
Using the backshift operator $\BSO$, which is defined by $\BSO\Y_n=\Y_{n-1}$, this can be written equivalently as
\begin{align}
\label{eq-innoSSMpolynomial}
\beps_n =& \left\{\I_d - H\left[\I_N - (F-KH)\BSO\right]^{-1}K\BSO\right\}\Y_n = \Y_n - H\sum_{\nu=1}^\infty{(F-KH)^{\nu-1}K\Y_{n-\nu}}.
\end{align}
\end{subequations}
The covariance matrix $V=\E \beps_n \beps_n^T\in\SS^+_d(\R)$ of the innovations $\beps$ is given by
\begin{equation}
\label{eq-DefVcovmatrix}
V = \E \beps_n \beps_n^T = H\Omega H^T + S.
\end{equation}
\item \label{prop-KalmanfilterMAY} The process $\Y$ has the innovations representation
\begin{subequations}
\begin{equation}
\label{eq-innorepSSM}
\hat\X_n = F\X_{n-1} + K\beps_{n-1},\quad \Y_n = H\X_n + \beps_n,\quad n\in\Z,
\end{equation}
which, similar to \cref{eq-innoSSMcombined}, allows for the moving average representation
\begin{align}
\label{eq-MAssmY}
\Y_n =& \left\{\I_d - H\left[\I_N - F\BSO\right]^{-1}K\BSO\right\}\Y_n = \beps_n + H\sum_{\nu=1}^\infty{F^{\nu-1}K\beps_{n-\nu}},\quad n\in\Z.
\end{align}
\end{subequations}
\end{enumerate}
\end{proposition}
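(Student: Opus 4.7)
The plan is to follow the classical construction of the steady-state Kalman filter and verify the four claims in order, invoking standard Hilbert-space projection arguments. First I would define $\hat\X_n := P_{n-1}\X_n$, the orthogonal projection of $\X_n$ onto $\overline{\operatorname{span}}\{\Y_\nu:\nu\leq n-1\}$. Strict stationarity of $(\ZZ^T,\W^T)^T$ together with the moving-average representation \labelcref{eq-MArepY} makes $\Y$ stationary, so the prediction-error covariance $\Omega := \E[(\X_n-\hat\X_n)(\X_n-\hat\X_n)^T]$ does not depend on $n$. Writing $\X_{n+1} = F\X_n + \ZZ_n$, projecting, and updating the projection by adding the new innovation $\beps_n$, I would derive the time-varying Kalman recursion in the usual way and read off the stationary DARE \labelcref{eq-DefOmega} as a fixed-point equation for $\Omega$. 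Uniqueness among positive semidefinite solutions would follow by showing that any candidate $\tilde\Omega\in\SS_N^+(\R)$ gives rise to the same projection operators $P_{n-1}$ on the Hilbert space generated by $\Y$; equivalently one invokes the monotonicity/concavity of the Riccati operator under the assumed positive definiteness of $Q$ or $S$.

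For part \ref{prop-KalmanfilterGain}, I would manipulate \labelcref{eq-DefOmega} using the definition \labelcref{eq-DefK-QML} of $K$ into the Lyapunov form
\begin{equation*}
\Omega = (F-KH)\Omega(F-KH)^T + Q - KR^T - RK^T + KSK^T,
\end{equation*}
and then verify that the residual term on the right is positive semidefinite by completing the square in the block matrix built from $Q,R,S$. A standard Lyapunov argument would then give that $\sigma(F-KH)$ lies in the closed unit disk, and strict stability would be extracted by ruling out unit-modulus eigenvectors: such an eigenvector would have to lie in the kernel of the residual, which combined with positive definiteness of $Q$ or of $S$ (the two cases handled separately using the explicit form of $K$) yields a contradiction. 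I expect this to be the main technical obstacle, since the usual detectability/stabilizability hypotheses must be replaced here by the assumption that at least one of $Q,S$ is positive definite, and the two cases require slightly different arguments.

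Once strict stability of $F-KH$ is established, part \ref{prop-KalmanfilterMAbeps} follows quickly: the first recursion in \labelcref{eq-innoSSM} admits a unique strictly stationary $L^2$-solution given by the convergent series in \labelcref{eq-innoSSMpolynomial}, and defining $\beps_n:=\Y_n-H\hat\X_n$ I would check that $\beps_n = \Y_n-P_{n-1}\Y_n$, which identifies it with the linear innovations of \cref{definition-innovations}. The covariance identity $V = H\Omega H^T + S$ drops out of the decomposition $\beps_n = H(\X_n-\hat\X_n)+\W_n$ and the orthogonality of $\X_n-\hat\X_n$ to $\W_n$, which holds because $\W_n$ is uncorrelated with everything in the past $\sigma$-algebra while $\X_n-\hat\X_n$ belongs to the closure of that past. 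Finally, part \ref{prop-KalmanfilterMAY} is a direct rewriting: substituting $\Y_{n-1}=H\hat\X_{n-1}+\beps_{n-1}$ into \labelcref{eq-innoSSM} cancels the $KH\hat\X_{n-1}$ contribution and yields \labelcref{eq-innorepSSM}, and iterating the resulting recursion gives the moving-average expansion \labelcref{eq-MAssmY}, with convergence guaranteed by $|\sigma(F)|<1$.
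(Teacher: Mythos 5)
First, a point of comparison: the paper does not prove this proposition at all --- it is imported verbatim as ``a combination of \citet[Proposition 12.2.3]{brockwell1991tst} and \citet[Proposition 13.2]{hamilton1994tsa}'', so there is no internal proof to match and your text is necessarily a reconstruction of the textbook arguments. Your overall architecture is the standard one: define $\hat\X_n$ as the projection of $\X_n$ onto the past of $\Y$, obtain the DARE from stationarity of the prediction-error covariance, rewrite it in the Lyapunov form $\Omega=(F-KH)\Omega(F-KH)^T+G$ with $G=Q-KR^T-RK^T+KSK^T$ (which is indeed the congruence of the joint covariance of $(\ZZ_n,\W_n)$ by the block row $(\I_N,\ -K)$, hence positive semidefinite), and then unwind the recursions for parts \labelcref{prop-KalmanfilterMAbeps} and \labelcref{prop-KalmanfilterMAY}. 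Those last two parts are essentially complete once stability of $F-KH$ is in hand; the only slip there is the justification of $V=H\Omega H^T+S$: the prediction error $\X_n-\hat\X_n$ does \emph{not} ``belong to the closure of the past'' (it is orthogonal to it); the correct reason is that $\X_n$ is a function of $\{\ZZ_m\}_{m<n}$ and $\hat\X_n$ a limit of linear combinations of $\{\Y_m\}_{m<n}$, both uncorrelated with $\W_n$ by \cref{eq-def-ssmgencov}.

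There are, however, two genuine gaps. (a) Uniqueness of the positive semidefinite solution of \cref{eq-DefOmega} is asserted, not proved: the claim that ``any candidate $\tilde\Omega$ gives rise to the same projection operators'' is circular, since an arbitrary psd solution of the algebraic equation need not arise from any projection, and the appeal to ``monotonicity/concavity of the Riccati operator'' is exactly the nontrivial content of the cited textbook results (which prove convergence of the finite-horizon Riccati iterates $\Omega_n\to\Omega$ and deduce uniqueness from that). (b) The strict stability argument in part \labelcref{prop-KalmanfilterGain} does not close as described. A unit-modulus eigenvector $\bv$ of $(F-KH)^T$ must indeed satisfy $G\bv=\bzero_N$, but $G$ need not be positive definite even when $Q$ or $S$ is: for $S$ positive definite the minimum of $G$ over all gains is the Schur complement $Q-RS^{-1}R^T$, which can vanish identically (take $\ZZ_n=M\W_n$ for a deterministic matrix $M$, so that $Q=MSM^T$ and $R=MS$). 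The kernel condition only yields $Q\bv=RK^*\bv$ and $R^T\bv=SK^*\bv$, from which no contradiction follows by positive definiteness of $Q$ or $S$ alone; one must additionally exploit the assumed stability of $F$ (equivalently, argue via the convergence $\Omega_n\to\Omega$ and the minimum-phase property of the resulting spectral factor), which is precisely how the proofs in \citet{brockwell1991tst} and \citet{hamilton1994tsa} proceed. As written, your sketch would let a unit-circle eigenvalue of $F-KH$ survive.
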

For some parameter space $\Theta\subset\R^r$, $r\in\N$, the mappings
\begin{subequations}
\label[pluralequation]{eq-paramSSM}
\begin{equation}
\label{eq-paramSSMFH}
F_{(\cdot)}:\Theta\to M_{N}(\R),\qquad H_{(\cdot)}:\Theta\to M_{d,N},
\end{equation}
together with a collection of strictly stationary stochastic processes $\ZZ_{\bth}$, $\W_{\bth}$, $\bth\in\Theta$, with finite second moments determine a parametric family $\left(F_{\bth},H_{\bth},\ZZ_{\bth},\W_{\bth}\right)_{\bth\in\Theta}$ of linear state space models according to \cref{def-ssmgen}. For the variance and covariance matrices of the noise sequences $\ZZ,\W$ we use the notation (cf. \cref{eq-def-ssmgencov}) $Q_{\bth} = \E\ZZ_{\bth,n}\ZZ_{\bth,n}^T$, $S_{\bth} = \E\W_{\bth,n}\W_{\bth,n}^T$, and $R_{\bth} = \E\ZZ_{\bth,n}\W_{\bth,n}^T$, which defines the functions
\begin{equation}
\label{eq-paramSSMQSR}
Q_{(\cdot)}:\Theta\to \SS^+_{N}(\R),\qquad S_{(\cdot)}:\Theta\to \SS^+_d,\qquad R_{(\cdot)}:\Theta\to M_{N,d}(\R).
\end{equation}
\end{subequations}
It is well known \citep[Eq. (11.5.4)]{brockwell1991tst} that for this model, minus twice the logarithm of the Gaussian likelihood of $\bth$ based on a sample $\y^L=(\Y_1,\ldots,\Y_L)$ of observations can be written as
\begin{equation}
 \label{eq-likelihood}
\mathscr{L}(\bth,\y^L) = \sum_{n=1}^L l_{\bth,n}=\sum_{n=1}^L\left[d\log{2\pi}+\log{\det V_{\bth}}+\beps_{\bth,n}^T V_{\bth}^{-1}\beps_{\bth,n}\right],
\end{equation}
where $\beps_{\bth,n}$ and $V_{\bth}$ are given by analogues of \cref{eq-innoSSM,eq-DefVcovmatrix}, namely
\begin{align}
\label{eq-defpseudoinno}
\beps_{\bth,n} =& \left\{\I_d - H_{\bth}\left[\I_N - (F_{\bth}-K_{\bth}H_{\bth})\BSO\right]^{-1}K_{\bth}\BSO\right\}\Y_n,\quad n\in\Z,\qquad V_{\bth} = H_{\bth}\Omega_{\bth} H_{\bth}^T + S_{\bth},
\end{align}
and $K_{\bth}, \Omega_{\bth}$ are defined in the same way as $K$, $\Omega$ in \cref{eq-DefK-QML,eq-DefOmega}. In the following we always assume that $\y^L=(\Y_{\bth_0,1},\ldots,\Y_{\bth_0,L})$ is a sample from the output process of the state space model $\left(F_{\bth_0},H_{\bth_0},\ZZ_{\bth_0},\W_{\bth_0}\right)$ corresponding to the parameter value $\bth_0$. We therefore call $\bth_0$ the {\it true parameter value}. It is important to note that $\beps_{\bth_0}$ are the true innovations of $\Y_{\bth_0}$, and that therefore $\E\beps_{\bth_0,n}\beps_{\bth_0,n}^T=V_{\bth_0}$, but that this relation fails to hold for other values of $\bth$. This is due to the fact that $\beps_{\bth}$ is not the true innovations sequence of the state space model corresponding to the parameter value $\bth$. We therefore call the sequence $\beps_{\bth}$ {\it pseudo\hyp{}innovations}. 

The goal of this section is to investigate how the value $\bth_0$ can be estimated from $\y^L$ by maximizing \cref{eq-likelihood}. The first difficulty one is confronted with is that the pseudo\hyp{}innovations $\beps_{\bth}$ are defined in terms of the full history of the process $\Y=\Y_{\bth_0}$, which is not observed. It is therefore necessary to use an approximation to these innovations which can be computed from the finite sample $\y^L$. One such approximation is obtained if, instead of using the steady-state Kalman filter described in \cref{prop-Kalmanfilter}, one initializes the filter at $n=1$ with some prescribed values. More precisely, we define the approximate pseudo\hyp{}innovations $\hat\beps_{\bth}$ via the recursion
\begin{equation}
\label{eq-innohatSSM}
\hat\X_{\bth,n} = \left(F_{\bth}-K_{\bth}H_{\bth}\right)\hat\X_{\bth,n-1}+K_{\bth}\Y_{n-1},\quad \hat\beps_{\bth,n} = \Y_n - H_{\bth}\hat\X_{\bth,n},\quad n\in\N,
\end{equation}
and the prescription $\hat\X_{\bth,1} = \hat\X_{\bth,\text{initial}}$. The initial values $\hat\X_{\bth,\text{initial}}$ are usually either sampled from the stationary distribution of $\X_{\bth}$, if that is possible, or set to some deterministic value. Alternatively, one can additionally define a positive semidefinite matrix $\Omega_{\bth,\text{initial}}$ and compute Kalman gain matrices $K_{\bth,n}$ recursively via \citet[Eq. (12.2.6)]{brockwell1991tst}. While this procedure might be advantageous for small sample sizes, the computational burden is significantly smaller when the steady-state Kalman gain is used. The asymptotic properties which we are dealing with in this paper are expected to be the same for both choices because the Kalman gain matrices $K_{\bth,n}$ converge to their steady state values as $n$ tends to infinity \citep[Proposition 13.2]{hamilton1994tsa}.

The QML estimator $\hat\bth^L$ for the parameter $\bth$ based on the sample $\y^L$ is defined as
\begin{equation}
\label{eq-DefhatbthL}
\hat\bth^L = \argmin_{\bth\in\Theta}\widehat{\mathscr{L}}(\bth,\y^L),
\end{equation}
where $\widehat{\mathscr{L}}(\bth,\y^L)$ is obtained from $\mathscr{L}(\bth,\y^L)$ by substituting $\hat\beps_{\bth,n}$ from \cref{eq-innohatSSM} for $\beps_{\bth,n}$, i.\,e.\
\begin{align}
 \label{eq-hatlikelihood}
\widehat{\mathscr{L}}(\bth,\y^L) =& \sum_{n=1}^L \hat l_{\bth,n} =\sum_{n=1}^L\left[d\log{2\pi}+\log{\det V_{\bth}}+\hat\beps_{\bth,n}^T V_{\bth}^{-1}\hat\beps_{\bth,n}\right].
\end{align}

\subsection{Technical assumptions and main results}
\label{section-resultsDTSSM}

Our main results about the QML estimation for discrete\hyp{}time state space models are \cref{theorem-consistency}, stating that the estimator $\hat\bth^L$ given by \cref{eq-DefhatbthL} is strongly consistent, which means that $\hat\bth^L$ converges to $\bth_0$ almost surely, and \cref{theorem-hatbthLCLT}, which asserts the asymptotic normality of $\hat\bth^L$ with the usual $L^{1/2}$ scaling. In order to prove these results, we need to impose the following conditions.
\begin{assumptionparaD}
\label{assum-compact}
The parameter space $\Theta$ is a compact subset of $\R^r$. 
\end{assumptionparaD}

\begin{assumptionparaD}
\label{assum-smoothparam}
The mappings $F_{(\cdot)}$, $H_{(\cdot)}$, $Q_{(\cdot)}$, $S_{(\cdot)}$, and $R_{(\cdot)}$ in \cref{eq-paramSSM} are continuous.
\end{assumptionparaD}
The next condition guarantees that the models under consideration describe stationary processes.
\begin{assumptionparaD}
\label{assum-stability}
For every $\bth\in\Theta$, the following hold:
\begin{enumerate}[i)]
 \item \label{assum-stabilityF} the eigenvalues of $F_{\bth}$ have absolute values less than unity,
 \item \label{assum-stabilityQS}at least one of the two matrices $Q_{\bth}$ and $S_{\bth}$ is positive definite,
 \item \label{assum-stabilityV} the matrix $V_{\bth}$ is non\hyp{}singular.
\end{enumerate}
\end{assumptionparaD}
The next lemma shows that the assertions of \cref{assum-stability} hold in fact uniformly in $\bth$.
\begin{lemma}
\label{lemma-assum123consequences}
Suppose that \cref{assum-compact,assum-smoothparam,assum-stability} are satisfied. Then the following hold.
\begin{enumerate}[i)]
 \item\label{lemma-assum123consequences-F} There exists a positive number $\rho<1$ such that, for all $\bth\in\Theta$, it holds that
\begin{subequations}
\begin{equation}
\max\left\{|\lambda|:\lambda\in\sigma\left(F_{\bth}\right)\right\}\leq\rho.
\end{equation}
 \item\label{lemma-assum123consequences-FminusKH}There exists a positive number $\rho<1$ such that, for all $\bth\in\Theta$, it holds that
\begin{equation}
\max\left\{|\lambda|:\lambda\in\sigma\left(F_{\bth}-K_{\bth}H_{\bth}\right)\right\}\leq\rho,
\end{equation}
\end{subequations}
where $K_{\bth}$ is defined by \cref{eq-DefOmega,eq-DefK-QML}.
 \item\label{lemma-assum123consequences-V} There exists a positive number $C$ such that $\left\|V_{\bth}^{-1}\right\|\leq C$ for all $\bth$.
\end{enumerate}
\end{lemma}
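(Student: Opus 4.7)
The strategy for all three parts is the same: establish that the quantity in question depends continuously on $\bth$, then invoke compactness of $\Theta$ together with the pointwise bounds from \cref{assum-stability} and \cref{prop-Kalmanfilter}. The bulk of the work lies in showing continuity of the Riccati solution $\bth\mapsto\Omega_\bth$, from which continuity of $K_\bth$, of $F_\bth-K_\bth H_\bth$ and of $V_\bth$ all follow immediately.

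For \emph{part (i)}, the spectral radius $A\mapsto\max\{|\lambda|:\lambda\in\sigma(A)\}$ is continuous on $M_N(\R)$ (the eigenvalues of a matrix depend continuously on its entries, or equivalently one invokes Gelfand's formula $\rho(A)=\lim_k\|A^k\|^{1/k}$). By \cref{assum-smoothparam} the composition $\bth\mapsto\rho(F_\bth)$ is therefore continuous on $\Theta$; by \cref{assum-stability}\,i) it takes values in $[0,1)$; and since $\Theta$ is compact by \cref{assum-compact}, it attains its maximum at some $\rho<1$.

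\emph{Part (ii)} is the main obstacle. To apply the same argument to $F_\bth-K_\bth H_\bth$ I must first prove that $\bth\mapsto K_\bth$ is continuous, and for this by \cref{eq-DefK-QML} it suffices to show that $\bth\mapsto\Omega_\bth$ is continuous on $\Theta$. Continuity of the stabilising solution of a discrete algebraic Riccati equation under the standing detectability/stabilisability conditions enforced by \cref{assum-stability}\,ii) and part\,i) above is a classical result; one way to obtain it is to observe that the Riccati iteration $\Omega^{(k+1)}_\bth = F_\bth\Omega^{(k)}_\bth F_\bth^T+Q_\bth-[F_\bth\Omega^{(k)}_\bth H_\bth^T+R_\bth][H_\bth\Omega^{(k)}_\bth H_\bth^T+S_\bth]^{-1}[F_\bth\Omega^{(k)}_\bth H_\bth^T+R_\bth]^T$ initialized at $\Omega^{(0)}_\bth=0$ converges to $\Omega_\bth$, with rate governed by the spectral radius of $F_\bth-K_\bth H_\bth$ which is $<1$ pointwise; a compactness/continuity argument localises this convergence to be uniform in $\bth$, so the limit is continuous. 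Alternatively, continuity follows from the implicit function theorem applied to the Riccati map, the Fréchet derivative being invertible precisely because $F_\bth-K_\bth H_\bth$ is a Schur matrix. With $\bth\mapsto\Omega_\bth$ continuous, so is $\bth\mapsto F_\bth-K_\bth H_\bth$; by \cref{prop-Kalmanfilter}\,ii) its spectral radius is pointwise $<1$; and the compactness argument used in part (i) then delivers a uniform bound $\rho<1$.

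For \emph{part (iii)}, continuity of $\Omega_\bth$, $H_\bth$ and $S_\bth$ entails continuity of $V_\bth=H_\bth\Omega_\bth H_\bth^T+S_\bth$, and by \cref{assum-stability}\,iii) $V_\bth$ is invertible for every $\bth\in\Theta$; hence $\bth\mapsto V_\bth^{-1}$ is continuous on the compact set $\Theta$ and therefore uniformly bounded. The hard part will be recording the continuity of $\Omega_\bth$ carefully; once that is in hand, the remaining steps are routine compactness arguments.
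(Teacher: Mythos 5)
Your proposal is correct and follows essentially the same route as the paper: continuity of the eigenvalues (hence spectral radius) in the matrix entries, continuity of $\bth\mapsto\Omega_{\bth}$ and hence of $K_{\bth}$ and $V_{\bth}$, and compactness of $\Theta$ to upgrade the pointwise bounds of \cref{assum-stability} and \cref{prop-Kalmanfilter} to uniform ones. The only difference is that the paper simply cites \citet{sun1998sensitivity} for the continuity of the discrete-time algebraic Riccati solution, where you sketch a proof of that fact via the Riccati iteration or the implicit function theorem.
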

\begin{proof}
Assertion \labelcref{lemma-assum123consequences-F} is a direct consequence of \cref{assum-stability}, \labelcref{assum-stabilityF}, the assumed smoothness of $\bth\mapsto F_{\bth}$ (\cref{assum-smoothparam}), the compactness of $\Theta$ (\cref{assum-compact}), and the fact \citep[Fact 10.11.2]{bernstein2005matrix} that the eigenvalues of a matrix are continuous functions of its entries. Claim \labelcref{lemma-assum123consequences-FminusKH} follows with the same argument from \cref{prop-Kalmanfilter}, \labelcref{prop-KalmanfilterGain} and the fact that the solution of a discrete\hyp{}time algebraic Riccati equation is a continuous function of the coefficient matrices
\citep{sun1998sensitivity}. Moreover, by \cref{eq-DefVcovmatrix}, the function $\bth\mapsto V_{\bth}$ is continuous, which shows that \cref{assum-stability}, \labelcref{assum-stabilityV} holds uniformly in $\bth$ as well, and so \labelcref{lemma-assum123consequences-V} is proved.
\end{proof}
For the following assumption about the noise sequences $\ZZ$ and $\W$ we use the usual notion of ergodicity \citep[see, e.\,g.,][Chapter 6]{durrett2010probability}. 
\begin{assumptionparaD}
\label{assum-2moments}
The process $\left(\begin{array}{cc} \W_{\bth_0}^T & \ZZ_{\bth_0}^T \end{array}\right)^T$ is ergodic.
\end{assumptionparaD}
The assumption that the processes $\ZZ_{\bth_0}$ and $\W_{\bth_0}$ are ergodic implies via the moving average representation \labelcref{eq-MArepY} and \citet[Theorem 4.3]{krengel1985ergodic} that the output process $\Y=\Y_{\bth_0}$ is ergodic. As a consequence, the pseudo\hyp{}innovations $\beps_{\bth}$ defined in \cref{eq-defpseudoinno} are ergodic for every $\bth\in\Theta$.

Our first identifiability assumption precludes redundancies in the parametrization of the state space models under consideration and is therefore necessary for the true parameter value $\bth_0$ to be estimated consistently. It will be used in \cref{lemma-uniqueminimumQ} to show that the quasi likelihood function given by \cref{eq-hatlikelihood} asymptotically has a unique global minimum at $\bth_0$.
\begin{assumptionparaD}
\label{assum-identifiability1}
For all $\bth_0\neq\bth\in\Theta$, there exists a $z\in\C$ such that
\begin{equation}
\label{eq-identifiability1filter}
H_{\bth}\left[\I_N-\left(F_{\bth}-K_{\bth}H_{\bth}\right)z\right]^{-1}K_{\bth}\neq H_{\bth_0}\left[\I_N-\left(F_{\bth_0}-K_{\bth_0}H_{\bth_0}\right)z\right]^{-1}K_{\bth_0},\quad\text{or}\quad V_{\bth}\neq V_{\bth_0}.
\end{equation}

\end{assumptionparaD}
\Cref{assum-identifiability1} can be rephrased in terms of the spectral densities $f_{\Y_{\bth}}$ of the output processes $\Y_{\bth}$ of the state space models $\left(F_{\bth},H_{\bth},\ZZ_{\bth},\W_{\bth}\right)$. This characterization will be very useful when we apply the estimation theory developed in this section to state space models that arise from sampling a continuous\hyp{}time ARMA process.
\begin{lemma}
\label{lemma-identifiabilityspectralDT}
If, for all $\bth_0\neq\bth\in\Theta$, there exists an $\omega\in[-\pi,\pi]$ such that $f_{\Y_{\bth}}(\omega)\neq f_{\Y_{\bth_0}}(\omega)$, then \cref{assum-identifiability1} holds.
\end{lemma}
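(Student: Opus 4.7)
I would argue by contraposition: assume \cref{assum-identifiability1} fails, so that there is some $\bth\neq\bth_0$ in $\Theta$ for which $V_\bth=V_{\bth_0}$ and for every $z\in\C$
\begin{equation*}
H_\bth\left[\I_N-(F_\bth-K_\bth H_\bth)z\right]^{-1}K_\bth
\;=\;
H_{\bth_0}\left[\I_N-(F_{\bth_0}-K_{\bth_0}H_{\bth_0})z\right]^{-1}K_{\bth_0}.
\end{equation*}
The goal is to deduce that the spectral densities $f_{\Y_\bth}$ and $f_{\Y_{\bth_0}}$ agree on $[-\pi,\pi]$, contradicting the hypothesis of the lemma.

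The key step is to express $f_{\Y_\bth}$ in terms of the quantities in the displayed identifiability condition. By \cref{prop-Kalmanfilter}\,\labelcref{prop-KalmanfilterMAY}, the output process admits the innovation MA representation \labelcref{eq-MAssmY} with transfer function
\begin{equation*}
\Phi_\bth(z)=\I_d+H_\bth[\I_N-F_\bth z]^{-1}K_\bth z,
\end{equation*}
and its innovations $\beps_\bth$ form a white-noise sequence with covariance $V_\bth$. Since $F_\bth$ has spectral radius strictly less than one by \cref{lemma-assum123consequences}\labelcref{lemma-assum123consequences-F}, the standard spectral-filtering formula gives
\begin{equation*}
f_{\Y_\bth}(\omega)=\frac{1}{2\pi}\Phi_\bth(e^{-\ii\omega})\,V_\bth\,\Phi_\bth(e^{-\ii\omega})^*,\qquad\omega\in[-\pi,\pi].
\end{equation*}
On the other hand, comparing \labelcref{eq-innoSSMpolynomial} and \labelcref{eq-MAssmY} one sees that the filter $\Psi_\bth(z)=\I_d-H_\bth[\I_N-(F_\bth-K_\bth H_\bth)z]^{-1}K_\bth z$ is precisely the multiplicative inverse of $\Phi_\bth(z)$ as rational matrix functions (because the cascade $\Y\mapsto\beps\mapsto\Y$ of the two associated state recursions is the identity on the space of stationary processes, and both filters are analytic on a neighbourhood of the closed unit disk by \cref{lemma-assum123consequences}). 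Hence equality of the two filters appearing in \labelcref{eq-identifiability1filter} for every $z\in\C$ is equivalent to $\Phi_\bth\equiv\Phi_{\bth_0}$ as rational matrix functions; evaluating at $z=e^{-\ii\omega}$ and combining with $V_\bth=V_{\bth_0}$ yields $f_{\Y_\bth}\equiv f_{\Y_{\bth_0}}$, which is the desired contradiction.

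The only non-routine point is the identification of $\Psi_\bth$ as the inverse of $\Phi_\bth$; I expect to handle this either by a direct block-matrix manipulation using the Woodbury identity applied to $(\I_N-F_\bth z)+K_\bth z\cdot H_\bth$, or—more conceptually—by noting that the two recursions in \labelcref{eq-innoSSM,eq-innorepSSM} are inverse input/output systems for the realization $(F_\bth,H_\bth,K_\bth)$, a standard fact from linear-systems theory. Everything else is elementary spectral factorisation, and the contrapositive formulation makes clear why the condition on the spectral density is a \emph{sufficient} but in general stronger-looking way to enforce \cref{assum-identifiability1}.
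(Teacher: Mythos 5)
Your proof is correct and follows essentially the same route as the paper: argue by contraposition and observe that the spectral density $f_{\Y_\bth}$ is determined by the filter $z\mapsto H_\bth[\I_N-(F_\bth-K_\bth H_\bth)z]^{-1}K_\bth$ together with $V_\bth$, so failure of \cref{assum-identifiability1} forces $f_{\Y_\bth}\equiv f_{\Y_{\bth_0}}$. The only difference is presentational: the paper invokes the spectral-density formula of \citet[Eq.\ (10.4.43)]{hamilton1994tsa} directly, whereas you rederive it by passing through the moving-average filter of \cref{eq-MAssmY} and identifying the whitening filter of \cref{eq-innoSSMpolynomial} as its inverse via the Woodbury identity.
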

\begin{proof}
We recall from \citet[Eq. (10.4.43)]{hamilton1994tsa} that the spectral density $f_{\Y_{\bth}}$ of the output process $\Y_{\bth}$ of the state space model $\left(F_{\bth},H_{\bth},\ZZ_{\bth},\W_{\bth}\right)$ is given by $f_{\Y_{\bth}}(\omega) =(2\pi)^{-1}\mathscr{H}_{\bth}\left(\ee^{\ii\omega}\right)V_{\bth}\mathscr{H}_{\bth}\left(\ee^{-\ii\omega}\right)^T$, $\omega\in[-\pi,\pi]$, where $\mathscr{H}_{\bth}(z)\coloneqq H_{\bth}\left[\I_N-\left(F_{\bth}-K_{\bth}H_{\bth}\right)z\right]^{-1}K_{\bth}+z$. If \cref{assum-identifiability1} does not hold, we have that both $\mathscr{H}_{\bth}(z)=\mathscr{H}_{\bth_0}(z)$ for all $z\in\C$, and $V_{\bth}=V_{\bth_0}$, and, consequently, that $f_{\Y_{\bth}}(\omega)=f_{\Y_{\bth_0}}(\omega)$, for all $\omega\in[-\pi,\pi]$, contradicting the assumption of the lemma.
\end{proof}

Under the assumptions described so far we obtain the following consistency result.

\begin{theorem}[Consistency of $\hat\bth^L$]
\label{theorem-consistency}
Assume that $\left(F_{\bth},H_{\bth},\ZZ_{\bth},\W_{\bth}\right)_{\bth\in\Theta}$ is a parametric family of state space models according to \cref{def-ssmgen}, and let $\y^L=(\Y_{\bth_0,1},\ldots,\Y_{\bth_0,L})$ be a sample of length $L$ from the output process of the model corresponding to $\bth_0$. If \cref{assum-compact,assum-smoothparam,assum-stability,assum-2moments,assum-identifiability1} hold, then the QML estimator $\hat\bth^L=\argmin_{\bth\in\Theta}\widehat{\mathscr{L}}(\bth,\y^L)$ is strongly consistent, i.\,e.\ $\hat\bth^L\to\bth_0$ almost surely, as $L\to\infty$.
\end{theorem}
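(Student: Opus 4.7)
}
The plan is the classical Wald-type three-step argument for $M$-estimators: (i) replace the approximate criterion $L^{-1}\widehat{\mathscr{L}}(\bth,\y^L)$ by the stationary version $L^{-1}\mathscr{L}(\bth,\y^L)$ built from the pseudo-innovations $\beps_{\bth,n}$ in \cref{eq-defpseudoinno}, with an error that is uniform in $\bth$; (ii) apply the ergodic theorem to show that $L^{-1}\mathscr{L}(\bth,\y^L)$ converges almost surely, uniformly on $\Theta$, to $Q(\bth)\coloneqq \E l_{\bth,1}=d\log(2\pi)+\log\det V_{\bth}+\E[\beps_{\bth,1}^T V_{\bth}^{-1}\beps_{\bth,1}]$; and (iii) show that $Q$ has a strict minimum at $\bth_0$, so that any limit point of $\hat\bth^L$ must equal $\bth_0$.

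For step (i), the key input is \cref{lemma-assum123consequences}\labelcref{lemma-assum123consequences-FminusKH}, which gives a $\rho<1$ independent of $\bth$ such that $\sigma(F_{\bth}-K_{\bth}H_{\bth})\subset\{|\lambda|\leq\rho\}$. Subtracting \cref{eq-innohatSSM} from \cref{eq-innoSSM} one obtains $\hat\beps_{\bth,n}-\beps_{\bth,n}=-H_{\bth}(F_{\bth}-K_{\bth}H_{\bth})^{n-1}(\hat\X_{\bth,\text{initial}}-\hat\X_{\bth,1})$, which, by \cref{assum-smoothparam,assum-compact}, is bounded by $C\rho^{n-1}$ times a random variable with finite second moment, uniformly in $\bth$. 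Combining this exponential bound with \cref{lemma-assum123consequences}\labelcref{lemma-assum123consequences-V} and the fact that $\beps_{\bth,n}$ has uniformly bounded second moments, a telescoping estimate for the quadratic form $\hat\beps_{\bth,n}^T V_{\bth}^{-1}\hat\beps_{\bth,n}-\beps_{\bth,n}^T V_{\bth}^{-1}\beps_{\bth,n}$ together with Cauchy--Schwarz yields
\begin{equation*}
\sup_{\bth\in\Theta}\Bigl|L^{-1}\widehat{\mathscr{L}}(\bth,\y^L)-L^{-1}\mathscr{L}(\bth,\y^L)\Bigr|\xrightarrow{\text{a.s.}}0.
\end{equation*}

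For step (ii), \cref{assum-2moments} together with \cref{eq-MArepY} implies that $\Y=\Y_{\bth_0}$ is ergodic, and since the filter \cref{eq-defpseudoinno} represents $\beps_{\bth,n}$ as a measurable functional of $(\Y_{\nu})_{\nu\leq n}$, the process $(l_{\bth,n})_{n\in\Z}$ is stationary and ergodic with $\E|l_{\bth,1}|<\infty$ by the uniform moment bound from step (i). Birkhoff's ergodic theorem gives pointwise almost-sure convergence $L^{-1}\mathscr{L}(\bth,\y^L)\to Q(\bth)$. Uniformity in $\bth$ follows from a standard equicontinuity argument: by \cref{assum-smoothparam,assum-compact} and the continuity of the solution of the Riccati equation, $\bth\mapsto\beps_{\bth,n}$ is (locally) Lipschitz in $\bth$ with Lipschitz constant having a uniformly integrable envelope, so that the finite-dimensional ergodic convergence extends to uniform convergence via a covering of the compact set $\Theta$.

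Step (iii) is the actual identification step. One writes $Q(\bth)-Q(\bth_0)=\{\log\det V_{\bth}-\log\det V_{\bth_0}+\tr(V_{\bth}^{-1}V_{\bth_0})-d\}+\E[(\beps_{\bth,1}-\beps_{\bth_0,1})^T V_{\bth}^{-1}(\beps_{\bth,1}-\beps_{\bth_0,1})]$, where one uses that $\beps_{\bth_0,1}$ is orthogonal to the past of $\Y$ and that $\beps_{\bth,1}-\beps_{\bth_0,1}$ is measurable with respect to that past. The first bracket is non-negative by the standard matrix entropy inequality, vanishing iff $V_{\bth}=V_{\bth_0}$; the second term is non-negative and vanishes iff the two innovations filters in \cref{eq-identifiability1filter} coincide. \Cref{assum-identifiability1} rules out both equalities simultaneously for $\bth\neq\bth_0$, so $Q(\bth)>Q(\bth_0)$ on $\Theta\setminus\{\bth_0\}$. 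Combining this with the uniform convergence and compactness of $\Theta$, any almost-sure limit of $\hat\bth^L$ must equal $\bth_0$, proving strong consistency.

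The main obstacle I anticipate is making the uniform-in-$\bth$ control in steps (i) and (ii) fully rigorous: the bound on $\hat\beps_{\bth,n}-\beps_{\bth,n}$ and the Lipschitz bound on $\bth\mapsto \beps_{\bth,n}$ both depend on continuity of the Riccati solution $\Omega_{\bth}$ and of the Kalman gain $K_{\bth}$ in $\bth$, and these must be combined with the uniform spectral radius bound $\rho<1$ from \cref{lemma-assum123consequences} to produce integrable dominating functions. Once such uniform bounds are available, the identifiability argument is largely algebraic.
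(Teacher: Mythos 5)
Your plan reproduces the paper's proof essentially step for step: the exponential, $\bth$-uniform approximation of $\hat\beps_{\bth,n}$ by $\beps_{\bth,n}$ (the paper's \cref{lemma-propinnovations,lemma-likelihoodequiv}), the pointwise ergodic theorem upgraded to uniform convergence on the compact $\Theta$ (\cref{lemma-uniformconvergence}), the decomposition of $\mathscr{Q}(\bth)-\mathscr{Q}(\bth_0)$ via orthogonality of $\beps_{\bth_0,1}$ to the past plus the inequality $\tr M-\log\det M\geq d$ (\cref{lemma-uniqueminimumQ}), and the standard argmin argument. The only point stated rather than argued is that vanishing of $\E\left[(\beps_{\bth,1}-\beps_{\bth_0,1})^TV_{\bth}^{-1}(\beps_{\bth,1}-\beps_{\bth_0,1})\right]$ forces the two filters to coincide, which is exactly the content of the paper's \cref{lemma-identifiability1} (using non-singularity of $V_{\bth_0}$ and the prediction-error interpretation of $\beps_{\bth_0}$); this is a detail to fill in, not a different route.
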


We now describe the conditions which we need to impose in addition to \cref{assum-compact,assum-smoothparam,assum-stability,assum-2moments,assum-identifiability1} for the asymptotic normality of the QML estimator to hold. The first one excludes the case that the true parameter value $\bth_0$ lies on the boundary of the domain $\Theta$.
\begin{assumptionparaD}
\label{assum-interior}
The true parameter value $\bth_0$ is an element of the interior of $\Theta$. 
\end{assumptionparaD}
Next we need to impose a higher degree of smoothness than stated in \cref{assum-smoothparam} and a stronger moment condition than \cref{assum-2moments}.

\begin{assumptionparaD}
\label{assum-smoothparam3}
The mappings $F_{(\cdot)}$, $H_{(\cdot)}$, $Q_{(\cdot)}$, $S_{(\cdot)}$, and $R_{(\cdot)}$ in \cref{eq-paramSSM} are three times continuously differentiable.
\end{assumptionparaD}
By the results of the sensitivity analysis of the discrete\hyp{}time algebraic Riccati equation in \citet{sun1998sensitivity}, the same degree of smoothness, namely $C^3$, also carries over to the mapping $\bth\mapsto V_{\bth}$.
\begin{assumptionparaD}
\label{assum-4moments}
The process $\left(\begin{array}{cc} \W_{\bth_0}^T & \ZZ_{\bth_0}^T \end{array}\right)^T$ has finite $(4+\delta)$th moments for some $\delta>0$.
\end{assumptionparaD}
\Cref{assum-4moments} implies that the process $\Y$ has finite $(4+\delta)$th moments. In the definition of the general linear stochastic state space model and in \cref{assum-2moments}, it was only assumed that the sequences $\ZZ$ and $\W$ are stationary and ergodic. This structure alone does not entail a sufficient amount of asymptotic independence for results like \cref{theorem-hatbthLCLT} to be established. We assume that the process $\Y$ is strongly mixing in the sense of \citet{rosenblatt1956central}, and we impose a summability condition on the strong mixing coefficients, which is known to be sufficient for a \CLT\ for $\Y$ to hold \citep{ibragimov1962some,bradley2007introduction}.
\begin{assumptionparaD}
\label{assum-mixing}
Denote by $\alpha_{\Y}$ the strong mixing coefficients of the process $\Y=\Y_{\bth_0}$. There exists a constant $\delta>0$ such that $\sum_{m=0}^\infty\left[\alpha_{\Y}(m)\right]^{\frac{\delta}{2+\delta}}<\infty$.
\end{assumptionparaD}
In the case of exponential strong mixing, \cref{assum-mixing} is always satisfied, and it is no restriction to assume that the $\delta$ appearing in \cref{assum-4moments,assum-mixing} are the same. It has been shown in \citet{mokkadem1988mixing,schlemmmixing2010} that, because of the autoregressive structure of the state equation \labelcref{eq-eq-stateeq-QMLDT}, exponential strong mixing of the output process $\Y_{\bth_0}$ can be assured by imposing the condition that the process $\ZZ_{\bth_0}$ is an i.\,i.\,d.\ sequence whose marginal distributions possess a non\hyp{}trivial absolutely continuous component in the sense of Lebesgue's decomposition theorem.

Finally, we require another identifiability assumption, that will be used to ensure that the Fisher information matrix of the QML estimator is non\hyp{}singular. This is necessary because the asymptotic covariance matrix in the asymptotic normality result for $\hat\bth^L$ is directly related to the inverse of that matrix. \Cref{assum-identifiabilityFisher} is formulated in terms of the first derivative of the parametrization of the model, which makes it relatively easy to check in practice; the Fisher information matrix, in contrast, is related to the second derivative of the logarithmic Gaussian likelihood. For $j\in\N$ and $\bth\in\Theta$, the vector $\psi_{\bth,j}\in \R^{(j+2)d^2}$ is defined as
\begin{equation}
\label{eq-DefPsij}
\psi_{\bth,j} = \left(\begin{array}{c}
	  \left[\I_{j+1}\otimes K_{\bth}^T\otimes H_{\bth}\right]\left[\begin{array}{cccc}\left(\vec \I_N\right)^T& \left(\vec F_{\bth}\right)^T & \cdots & \left(\vec F_{\bth}^{j}\right)^T\end{array}\right]^T\\
	  \vec V_{\bth}
	  \end{array}\right),
\end{equation}
where $\otimes$ denotes the Kronecker product of two matrices, and $\vec$ is the linear operator that transforms a matrix into a vector by stacking its columns on top of each other.
\begin{assumptionparaD}
\label{assum-identifiabilityFisher}
There exists an integer $j_0\in\N$ such that the $[(j_0+2) d^2]\times r$ matrix $\nabla_{\bth}\psi_{\bth_0,j_0}$ has rank $r$.
\end{assumptionparaD}

Our main result about the asymptotic distribution of the QML estimator for discrete\hyp{}time state space models is the following theorem. \Cref{eq-DefIJDT} shows in particular that this asymptotic distribution is independent of the choice of the initial values $\hat\X_{\bth,\text{initial}}$.

\begin{theorem}[Asymptotic normality of $\hat\bth^L$]
\label{theorem-hatbthLCLT}
Assume that $\left(F_{\bth},H_{\bth},\ZZ_{\bth},\W_{\bth}\right)_{\bth\in\Theta}$ is a parametric family of state space models according to \cref{def-ssmgen}, and let $\y^L=(\Y_{\bth_0,1},\ldots,\Y_{\bth_0,L})$ be a sample of length $L$ from the output process of the model corresponding to $\bth_0$. If \cref{assum-compact,assum-smoothparam,assum-smoothparam3,assum-stability,assum-2moments,assum-4moments,assum-interior,assum-mixing,assum-identifiability1,assum-identifiabilityFisher} hold, then the maximum likelihood estimator $\hat\bth^L=\argmin_{\bth\in\Theta}\widehat{\mathscr{L}}(\bth,\y^L)$ is asymptotically normally distributed with covariance matrix $\Xi=J^{-1}IJ^{-1}$, i.\,e.\
\begin{equation}
\label{eq-hatbthLCLT}
\sqrt{L}\left(\hat\bth^L-\bth_0\right)\xrightarrow[L\to\infty]{d}\mathscr{N}(\bzero,\Xi),
\end{equation}
where
\begin{equation}
\label{eq-DefIJDT}
I = \lim_{L\to\infty}L^{-1}\Var\left(\nabla_{\bth}\mathscr{L}\left(\bth_0,\y^L\right)\right),\quad J = \lim_{L\to\infty}L^{-1}\nabla^2_{\bth}\mathscr{L}\left(\bth_0,\y^L\right).
\end{equation}

\end{theorem}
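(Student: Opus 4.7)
The plan is to follow the classical route for QML asymptotic normality. Since $\bth_0$ lies in the interior of $\Theta$ (Assumption D6) and $\hat\bth^L$ is strongly consistent by Theorem 2.8, with probability tending to one we may write the first-order condition $\nabla_\bth \widehat{\mathscr{L}}(\hat\bth^L,\y^L)=0$ and Taylor-expand it componentwise about $\bth_0$:
\begin{equation*}
0 = L^{-1/2}\nabla_\bth \widehat{\mathscr{L}}(\bth_0,\y^L) + \Bigl[L^{-1}\nabla_\bth^2\widehat{\mathscr{L}}(\tilde\bth,\y^L)\Bigr]\sqrt{L}\bigl(\hat\bth^L-\bth_0\bigr),
\end{equation*}
where $\tilde\bth$ lies between $\hat\bth^L$ and $\bth_0$ (a different mean value on each row). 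The theorem then reduces to three ingredients: (a) replacing $\widehat{\mathscr{L}}$ by its stationary version $\mathscr{L}$ up to negligible error; (b) a central limit theorem for the stationary score at $\bth_0$ with covariance $I$; and (c) a uniform-in-$\bth$ ergodic theorem for the Hessian, identifying its limit as $J$, together with the invertibility of $J$.

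For step (a), subtracting \cref{eq-innohatSSM} from \cref{eq-innoSSM} yields $\hat\beps_{\bth,n}-\beps_{\bth,n}=-H_\bth(F_\bth-K_\bth H_\bth)^{n-1}(\hat\X_{\bth,1}-\X_{\bth,1})$, which by \cref{lemma-assum123consequences}\labelcref{lemma-assum123consequences-FminusKH} decays at a geometric rate $\rho^n$ uniformly in $\bth$; the same uniform geometric bound extends to the first three $\bth$-derivatives by \cref{assum-smoothparam3} and repeated use of the product rule on the resolvent. Combined with \cref{assum-4moments}, this produces $\sup_{\bth\in\Theta}L^{-1/2}\bigl\|\nabla_\bth^k\widehat{\mathscr{L}}(\bth,\y^L)-\nabla_\bth^k\mathscr{L}(\bth,\y^L)\bigr\|\to 0$ almost surely for $k=0,1,2$, so every subsequent statement can be proved for $\mathscr{L}$. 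For step (b), I would write the score as $\nabla_\bth\mathscr{L}(\bth_0,\y^L)=\sum_{n=1}^L\nabla_\bth l_{\bth_0,n}$, a centred stationary ergodic sequence of square-integrable random variables that is measurable with respect to the $\sigma$-field generated by $\{\Y_k:k\le n\}$ because of the causal series representation of $\beps_{\bth_0,n}$ in \cref{eq-defpseudoinno}. Consequently it inherits the $\alpha$-mixing coefficients of $\Y$, and a $(2+\delta)$-moment bound follows from \cref{assum-4moments} together with the exponential decay of the coefficients of the MA expansions. An Ibragimov--Bradley CLT for $\alpha$-mixing sequences under \cref{assum-mixing} then yields $L^{-1/2}\nabla_\bth\mathscr{L}(\bth_0,\y^L)\convd\mathscr{N}(\bzero,I)$.

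For step (c), a uniform ergodic theorem applies to $\nabla_\bth^2 l_{\bth,n}$ because the map $\bth\mapsto \nabla_\bth^2 l_{\bth,n}$ is continuous and dominated by an integrable random variable (uniform geometric decay again), giving $L^{-1}\nabla_\bth^2\mathscr{L}(\bth,\y^L)\to\E\nabla_\bth^2 l_{\bth,1}$ uniformly on $\Theta$; the consistency of $\hat\bth^L$ then lets us evaluate at $\tilde\bth$ to obtain the limit $J=\E\nabla_\bth^2 l_{\bth_0,1}$. The main obstacle is to prove that $J$ is invertible, which I anticipate to be the most delicate part of the argument because the asymptotic covariance $J^{-1}IJ^{-1}$ requires it. A direct computation expresses $u^T J u$ as a sum of two non-negative terms coming from the quadratic form $\beps^T V^{-1}\beps$ and the term $\log\det V$, so $u^T J u=0$ forces $u^T\nabla_\bth\beps_{\bth_0,n}=0$ almost surely and $u^T\nabla_\bth\vec V_{\bth_0}=0$. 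Substituting the innovations representation \cref{eq-MAssmY} into the pseudo-innovations series so as to express $\nabla_\bth\beps_{\bth_0,n}$ as a linear functional of the i.i.d.-like true innovations $\{\beps_{\bth_0,n-\nu}\}_{\nu\ge 0}$, and using that $V_{\bth_0}$ is nonsingular, I would show that the coefficients $u^T\nabla_\bth\bigl(H_\bth F_\bth^{\nu-1}K_\bth\bigr)\big|_{\bth_0}$ must vanish for every $\nu\ge 1$. Rewriting these conditions via the Kronecker identity $\vec(AXB)=(B^T\otimes A)\vec X$ yields exactly $\nabla_\bth\psi_{\bth_0,j_0}^T u=0$, so \cref{assum-identifiabilityFisher} forces $u=\bzero$. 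Plugging the three ingredients into the Taylor expansion delivers \cref{eq-hatbthLCLT}, and the representation \cref{eq-DefIJDT} of $\Xi$ shows in particular independence of the initial values $\hat\X_{\bth,\text{initial}}$.
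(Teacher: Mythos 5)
Your overall architecture (Taylor expansion of the first\hyp{}order condition, a CLT for the stationary score, a uniform law of large numbers for the Hessian, and invertibility of $J$ via \cref{assum-identifiabilityFisher}) matches the paper's, and your steps (a) and (c) are essentially the arguments of \cref{lemma-propinnovationspartial,lemma-likelihoodequivpartial,lemma-convJ}. The gap is in step (b), and it is exactly the point the paper singles out as delicate. You assert that the score sequence $\nabla_{\bth}l_{\bth_0,n}$ ``inherits the $\alpha$\hyp{}mixing coefficients of $\Y$'' because each term is measurable with respect to $\sigma(\Y_k:k\leq n)$. That inclusion controls only the \emph{past} $\sigma$\hyp{}algebra of the score process; the \emph{future} $\sigma$\hyp{}algebra $\sigma(\nabla_{\bth}l_{\bth_0,k}:k\geq n+m)$ is not contained in $\sigma(\Y_k:k\geq n+m')$ for any finite gap $m'$, because $\beps_{\bth_0,n+m}$ and $\partial_i\beps_{\bth_0,n+m}$ are infinite moving averages reaching arbitrarily far into the past of $\Y$. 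Hence no inequality of the form $\alpha_{\ell}(m)\leq\alpha_{\Y}(m-c)$ is available, and the Ibragimov--Bradley CLT cannot be applied directly to the score. This is precisely the objection the paper raises, just before \cref{lemma-convIbth}, against the shortcut taken in \citet{mainassara2011estimating}.

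The paper closes this gap with a truncation argument that your proposal is missing: in \cref{lemma-asymnormaldeltaLmixingY} the score is split as $\sum_n[\mathcal{Y}_{m,n}-\E\mathcal{Y}_{m,n}]+\sum_n[\mathcal{Z}_{m,n}-\E\mathcal{Z}_{m,n}]$, where $\mathcal{Y}_{m,n}$ depends only on the finitely many values $\Y_{n-m},\ldots,\Y_n$, so that $\alpha_{\mathcal{Y}_m}(k)\leq\alpha_{\Y}(\max\{0,k-m\})$ and the mixing CLT applies for each fixed $m$; the remainder is then shown to satisfy $\tr\Var\left(L^{-1/2}\sum_n\mathcal{Z}_{m,n}\right)\leq C\rho^m$ uniformly in $L$, which requires the Davydov\hyp{}type covariance bound of \cref{lemma-davydov} together with the exponential decay of the moving\hyp{}average coefficients, and one concludes with the standard approximation theorem for weak limits. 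The same machinery is what establishes the existence of the limit $I$ in \cref{lemma-convIbth}, which your proposal takes for granted. Everything else in your outline is sound, including your slightly different but valid way of identifying the limit of the Hessian at the intermediate point via a uniform ergodic theorem rather than the paper's bound on the third derivatives.
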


Note that $I$ and $J$, which give the asymptotic covariance matrix $\Xi$ of the estimators, are deterministic and only depend on the true parameter value $\bth_0$. The matrix $J$ actually is the Fisher information and an alternative expression for $J$ can be found in \cref{lemma-convJ}. Despite being deterministic, the asymptotic variance $\Xi$ is  not immediate to obtain and needs to be estimated, as usually in connection with QML estimators. This is a non-trivial task and a detailed analysis of this is beyond the scope of the present paper, but worthy of consideration in more  detail in future work. However, it should be noted that when $\widehat\Xi^L$ is a consistent estimator for $\Xi$, then \cref{theorem-hatbthLCLT} implies that $\sqrt{L}(\widehat\Xi^L)^{-1/2}\left(\hat\bth^L-\bth_0\right)\xrightarrow[L\to\infty]{d}\mathscr{N}(\bzero,\I_r)$. Observe that no stable convergence in law (in the sense originally introduced by \cite{Renyi1963}) is needed to obtain the latter result for our QML estimator, as this stronger convergence concept is needed only when the limiting variance in a ``mixed normal limit theorem'' is random. 

In practice,  estimating the asymptotic covariance matrix $\Xi$ is important in order to construct confidence regions for the estimated parameters or in performing statistical tests. The problem of estimating it has also been considered in the framework of estimating weak VARMA processes in \citet{mainassara2011estimating} where the following procedure has been suggested, which is also applicable in our set\hyp{}up. First, $J(\bth_0)$ is estimated consistently by $\hat J^L=L^{-1}\nabla^2\widehat{\mathscr{L}}_{\bth}\left(\hat\bth^L,\y^L\right)$. For the computation of $\hat J^L$ we rely on the fact that the Kalman filter cannot only be used to evaluate the Gaussian log-likelihood of a state space model but also its gradient and Hessian. The most straightforward
way of achieving this is by direct differentiation of the Kalman filter equations, which results in increasing the number of passes through the filter to $r+1$ and $r(r+3)/2$ for the gradient and the Hessian, respectively.
The construction of a consistent estimator of $I=I(\bth_0)$ is based on the observation that $I=\sum_{\Delta\in\Z}{\Cov(\ell_{\bth_0,n},\ell_{\bth_0,n+\Delta})}$, where $\ell_{\bth_0,n} = \nabla_{\bth}\left[\log\det V_{\bth_0}+\beps_{\bth_0,n}^T V_{\bth_0}^{-1}\beps_{\bth_0,n}\right]$. Assuming that  $(\ell_{\bth_0,n})_{n\in\N^+}$ admits an infinite\hyp{}order AR representation $\Phi(\BSO)\ell_{\bth_0,n}=\boldsymbol{U}_n$, where $\Phi(z)=\I_r+\sum_{i=1}^\infty{\Phi_iz^i}$ and $(\boldsymbol{U}_n)_{n\in\N^+}$ is a weak white noise with covariance matrix $\Sigma_{\boldsymbol{U}}$, it follows from the interpretation of $I/(2\pi)$ as the value of the spectral density of $(\ell_{\bth_0,n})_{n\in\N^+}$ at frequency zero that $I$ can also be written as $I=\Phi^{-1}(1)\Sigma_{\boldsymbol{U}}\Phi(1)^{-1}$. The idea is to fit a long autoregression to $(\ell_{\hat\bth^L,n})_{n=1,\ldots L}$, the empirical counterparts of $(\ell_{\bth_0,n})_{n\in\N^+}$ which are defined by replacing $\bth_0$ with the estimate $\hat\bth^L$ in the definition of $\ell_{\bth_0,n}$. This is done by choosing an integer $s>0$, and performing a least-squares regression of $\ell_{\hat\bth^L,n}$ on $\ell_{\hat\bth^L,n-1},\ldots,\ell_{\hat\bth^L,n-s}$, $s+1\leq n\leq L$. Denoting by $\hat\Phi_s^L(z)=\I_r+\sum_{i=1}^s{\hat\Phi_{i,s}^L z^i}$ the obtained empirical autoregressive polynomial and by $\hat\Sigma_s^L$ the empirical covariance matrix of the residuals of the regression, it was claimed in \citet[Theorem 4]{mainassara2011estimating} that under the additional assumption $\E\left[\left\|\beps_n\right\|^{8+\delta}\right]<\infty$ the spectral estimator $\hat I_s^L = \left(\hat\Phi_s^L(1)\right)^{-1}\hat\Sigma_s^L\left(\hat\Phi_s^L(1)\right)^{T,-1}$ converges to $I$ in probability as $L,s\to\infty$ if $s^3/L\to 0$. The covariance matrix of $\hat\bth^L$ is then estimated consistently as
\begin{equation}
\label{asympCov}
\widehat\Xi_s^L = \frac{1}{L}\left(\hat J^L\right)^{-1}\hat I_s^L\left(\hat J^L\right)^{-1}.
\end{equation}

In the simulation study performed in \cref{section-Simulation}, we estimate the covariance  matrix $\Xi$ of the estimators in the way just describe. From a comparison with the standard deviations of the estimators obtained from the simulations it can be seen that the approach  performs convincingly.

A possible alternative approach to estimate the asymptotic covariance matrix $\Xi$ may also be the use of bootstrap techniques. However, it seems that to this end the existing bootstrapping techniques need to be extended considerably (cf. \citet{BrockwellKreissNiebuhr2012}).

\subsection[Proof of strong consistency]{Proof of Theorem \ref{theorem-consistency} -- Strong consistency}
\label{section-QMLDTSSMconsistency}

In this section we prove the strong consistency of the QML estimator $\hat\bth^L$. 

The standard idea why the QML (or sometimes also \emph{Gaussian} maximum likelihood) estimators work in a linear time series/state space model setting is that the QML approach basically is very close to estimating the parameters using the spectral density which is in turn in a one-to-one relation with the second moment structure (see e.g. \citet[Chapter 10]{brockwell1991tst}). The reason is, of course, that a Gaussian process is completely characterized by the mean and autocovariance function.  So as soon as one knows that the parameters to be estimated are identifiable from the autocovariance function (and the mean) and the process is known to be ergodic, the QML estimators should be strongly consistent. Despite this simple standard idea, the upcoming actual proof of the strong consistency is lengthy as well as technical and consists of the following steps:

\begin{enumerate}
 \item When we use the the Kalman filter with fixed parameters $\bth$ on the finite sample $\y^L$, the obtained pseudo-innovations $\hat\beps_{\bth}$ approximate the true pseudo-innovations $\beps_{\bth}$ (obtainable from the steady state Kalman filter in theory) well; see \cref{lemma-propinnovations}.
\item The quasi likelihood (QL) function $\widehat{\mathscr{L}}$ obtained from the finite sample $\y^L$ (via $\hat\beps_{\bth}$) converges for the sample size $L\to\infty$ uniformly in the parameter space to the true QL function $\mathscr{L}$ (obtained from the  pseudo-innovations $\beps_{\bth}$); see \cref{lemma-likelihoodequiv}.
\item As the number $L$ of observation grows, the QL function  $\widehat{\mathscr{L}}$ divided by $L$ converges to the expected QL function $\mathscr{Q}$ uniformly in the parameter space; see \cref{lemma-uniformconvergence}.
\item The expected QL function $\mathscr{Q}$ has a unique minimum at the true parameter $\bth_0$; see \cref{lemma-identifiability1,lemma-uniqueminimumQ}.
\item The QL function  $\widehat{\mathscr{L}}$ divided by the number of observations evaluated at its minimum in the parameter space (i.e., at the QML estimator) converges almost surely to the expected QL function $\mathscr{Q}$ evaluated at the true parameter $\bth_0$ (\emph{its} minimum).
\item Finally, one can show that also the argumentof the minimum of the QL function $\widehat{\mathscr{L}}$ (i.e. the QML estimators) converges for $L\to\infty$ to $\bth_0$, which proves the strong consistency.
\end{enumerate}

As a first step we show that the stationary pseudo\hyp{}innovations processes defined by the steady-state Kalman filter are uniformly approximated by their counterparts based on the finite sample $\y^L$.
\begin{lemma}
\label{lemma-propinnovations}
Under \cref{assum-compact,assum-smoothparam,assum-stability}, the pseudo\hyp{}innovations sequences $\beps_{\bth}$ and $\hat\beps_{\bth}$ defined by the Kalman filter equations \labelcref{eq-innoSSM,eq-innohatSSM} have the following properties.
\begin{enumerate}[i)]
 \item\label{lemma-propinnovationsexpconv}If the initial values $\hat\X_{\bth,\text{initial}}$ are such that $\sup_{\bth\in\Theta}\left\|\hat\X_{\bth,\text{initial}}\right\|$ is almost surely finite, then, with probability one, there exist a positive number $C$ and a positive number $\rho<1$, such that $\sup_{\bth\in\Theta}\left\|\beps_{\bth,n}-\hat\beps_{\bth,n}\right\|\leq C\rho^n$, $n\in\N$.
In particular, $\hat\beps_{\bth_0,n}$ converges to the true innovations $\beps_n=\beps_{\bth_0,n}$ at an exponential rate.
\item\label{lemma-propinnovations-expbounded} The sequences $\beps_{\bth}$ are linear functions of $\Y$, i.\,e.\ there exist matrix sequences $\left(c_{\bth,\nu}\right)_{\nu\geq 1}$, such that $\beps_{\bth,n} = \Y_n + \sum_{\nu=1}^\infty{c_{\bth,\nu}}\Y_{n-\nu}$. The matrices $c_{\bth,\nu}$ are uniformly exponentially bounded, i.\,e.\ there exist a positive constant $C$ and a positive constant $\rho<1$, such that $\sup_{\bth\in\Theta}\left\|c_{\bth,\nu}\right\|\leq C \rho^\nu$, $\nu\in\N$.
\end{enumerate}
\end{lemma}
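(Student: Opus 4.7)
My starting point is that both the steady-state filter in \cref{eq-innoSSM}, $\hat\X_{\bth,n}=(F_\bth-K_\bth H_\bth)\hat\X_{\bth,n-1}+K_\bth\Y_{n-1}$, and the finite-sample filter \labelcref{eq-innohatSSM} have identical transition matrix and identical forcing term; only their initializations differ. So the difference $\Delta_{\bth,n}$ of the two filtered state estimates at time $n$ satisfies the homogeneous recursion $\Delta_{\bth,n}=(F_\bth-K_\bth H_\bth)\Delta_{\bth,n-1}$, giving $\Delta_{\bth,n}=(F_\bth-K_\bth H_\bth)^{n-1}\Delta_{\bth,1}$, and the innovations gap is $\beps_{\bth,n}-\hat\beps_{\bth,n}=-H_\bth\Delta_{\bth,n}$. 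The proof thus reduces to a uniform exponential bound on the powers of $F_\bth-K_\bth H_\bth$ plus an almost-sure uniform bound on $\Delta_{\bth,1}$.

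\textbf{Uniform exponential bound.} The delicate step is to upgrade the uniform spectral-radius bound $\rho(F_\bth-K_\bth H_\bth)\leqslant\rho'<1$ from \cref{lemma-assum123consequences}\,\labelcref{lemma-assum123consequences-FminusKH} to a norm bound $\sup_{\bth\in\Theta}\|(F_\bth-K_\bth H_\bth)^n\|\leqslant C\rho^n$ with a single $C$ and $\rho\in(\rho',1)$. I would pick such a $\rho$; by Gelfand's formula, for every $\bth$ there is $m_\bth$ with $\|(F_\bth-K_\bth H_\bth)^{m_\bth}\|<\rho^{m_\bth}$. Using that $\bth\mapsto F_\bth-K_\bth H_\bth$ is continuous (since by \cref{assum-smoothparam} the maps $F_{(\cdot)}, H_{(\cdot)}$ are continuous, and $\bth\mapsto K_\bth$ inherits continuity from that of the DARE solution cited in the proof of \cref{lemma-assum123consequences}), the same inequality holds on an open neighbourhood of each $\bth$. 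Compactness of $\Theta$ (\cref{assum-compact}) yields a finite subcover and a common $m$ such that $\sup_\bth\|(F_\bth-K_\bth H_\bth)^m\|\leqslant\rho^m$. Writing $n=km+r$ with $0\leqslant r<m$ and using submultiplicativity together with the finite quantity $\sup_{\bth,\,r<m}\|(F_\bth-K_\bth H_\bth)^r\|$ (finite by continuity and compactness) gives the claimed uniform bound.

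\textbf{Initial term.} The finite-sample side $\sup_\bth\|\hat\X_{\bth,\text{initial}}\|$ is almost surely finite by hypothesis. For the steady-state side I use the series representation $\X_{\bth,1}=\sum_{\nu\geqslant 1}(F_\bth-K_\bth H_\bth)^{\nu-1}K_\bth\Y_{1-\nu}$, which converges in $L^2$ thanks to the exponential bound above, the continuity (hence boundedness on $\Theta$) of $K_\bth$, and the stationarity with finite second moments of $\Y$. Hoelder and monotone convergence then give $\E\sup_\bth\|\X_{\bth,1}\|<\infty$, so $\sup_\bth\|\Delta_{\bth,1}\|<\infty$ almost surely. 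Combining with $\sup_\bth\|H_\bth\|<\infty$ yields the exponential decay claimed in \labelcref{lemma-propinnovationsexpconv}, and the specialization to $\bth=\bth_0$ gives the statement about convergence to the true innovations.

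\textbf{Part (ii).} Iterating \cref{eq-innoSSM} produces the moving average expansion $\X_{\bth,n}=\sum_{\nu\geqslant 1}(F_\bth-K_\bth H_\bth)^{\nu-1}K_\bth\Y_{n-\nu}$, and hence $\beps_{\bth,n}=\Y_n-H_\bth\sum_{\nu\geqslant 1}(F_\bth-K_\bth H_\bth)^{\nu-1}K_\bth\Y_{n-\nu}$, matching \cref{eq-innoSSMpolynomial}. Reading off $c_{\bth,\nu}=-H_\bth(F_\bth-K_\bth H_\bth)^{\nu-1}K_\bth$ and invoking the same uniform exponential bound together with $\sup_\bth\|H_\bth\|,\sup_\bth\|K_\bth\|<\infty$ yields $\sup_\bth\|c_{\bth,\nu}\|\leqslant C\rho^\nu$. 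The main obstacle throughout is the uniform exponential bound on $(F_\bth-K_\bth H_\bth)^n$: pointwise spectral radius $<1$ is not enough, and the Gelfand-plus-compactness argument above, together with the continuity of the Riccati solution underlying $K_\bth$, is what makes uniformity work.
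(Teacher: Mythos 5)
Your proof is correct and follows essentially the same route as the paper's: iterate the two filter recursions, observe that their difference satisfies the homogeneous recursion driven by $F_{\bth}-K_{\bth}H_{\bth}$, and control everything through a uniform exponential bound on the powers of that matrix together with the boundedness of $H_{\bth}$ and $K_{\bth}$ on the compact set $\Theta$. You in fact supply two details the paper elides — the Gelfand-plus-compactness upgrade from the uniform spectral-radius bound of \cref{lemma-assum123consequences} to a uniform norm bound $\sup_{\bth\in\Theta}\|(F_{\bth}-K_{\bth}H_{\bth})^n\|\leq C\rho^n$, and the almost-sure finiteness of $\sup_{\bth\in\Theta}\|\X_{\bth,1}\|$ via the $L^2$ series representation — so nothing further is needed.
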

\begin{proof}
We first prove part \labelcref{lemma-propinnovationsexpconv} about the uniform exponential approximation of $\beps$ by $\hat\beps$. Iterating the Kalman equations \labelcref{eq-innoSSM,eq-innohatSSM}, we find that, for $n\in\N$,
\begin{align*}
\beps_{\bth,n} =& \Y_n-H_{\bth}\left(F_{\bth}-K_{\bth}H_{\bth}\right)^{n-1}\hat\X_{\bth,1}-\sum_{\nu=1}^{n-1}{H_{\bth} \left(F_{\bth}-K_{\bth}H_{\bth}\right)^{\nu-1}K_{\bth}\Y_{n-\nu}},\quad\text{and}\\
\hat\beps_{\bth,n} =& \Y_n-H_{\bth}\left(F_{\bth}-K_{\bth}H_{\bth}\right)^{n-1}\hat\X_{\bth,\text{initial}}-\sum_{\nu=1}^{n-1}{H_{\bth} \left(F_{\bth}-K_{\bth}H_{\bth}\right)^{\nu-1}K_{\bth}\Y_{n-\nu}}.
\end{align*}
Thus, using the fact that, by \cref{lemma-assum123consequences}, the spectral radii of $F_{\bth}-K_{\bth}H_{\bth}$ are bounded by $\rho<1$, it follows that
\begin{align*}
\sup_{\bth\in\Theta}\left\|\beps_{\bth,n}-\hat\beps_{\bth,n}\right\| =& \sup_{\bth\in\Theta}\left\|H_{\bth}\left(F_{\bth}-K_{\bth}H_{\bth}\right)^{n-1}(\X_{\bth,0}-\X_{\bth,\text{initial}})\right\| \leq \left\|H\right\|_{L^\infty(\Theta)}\rho^{n-1}\sup_{\bth\in\Theta}\left\|\X_{\bth,0}-\X_{\bth,\text{initial}}\right\|,
\end{align*}
where $\left\|H\right\|_{L^\infty(\Theta)}\coloneqq \sup_{\bth\in\Theta}\left\|H_{\bth}\right\|$ denotes the supremum norm of $H_{(\cdot)}$, which is finite by the Extreme Value Theorem. Since the last factor is almost surely finite by assumption, the claim follows. For part \labelcref{lemma-propinnovations-expbounded}, we observe that \cref{eq-innoSSM} and \cref{lemma-assum123consequences}, \labelcref{lemma-assum123consequences-FminusKH} imply that $\beps_{\bth}$ has the infinite\hyp{}order moving average representation $\beps_{\bth,n} = \Y_n-{H_{\bth}\sum_{\nu=1}^{\infty}\left(F_{\bth}-K_{\bth}H_{\bth}\right)^{\nu-1}K_{\bth}\Y_{n-\nu}}$, whose coefficients $c_{\bth,\nu}\coloneqq - H_{\bth} \left(F_{\bth}-K_{\bth}H_{\bth}\right)^{\nu-1}K_{\bth}$ are uniformly exponentially bounded. Explicitly, $\left\|c_{\bth.\nu}\right\|\leq \left\|H\right\|_{L^\infty(\Theta)}\left\|K\right\|_{L^\infty(\Theta)}\rho^{n-1}$. This completes the proof.
\end{proof}

\begin{lemma}
\label{lemma-likelihoodequiv}
Let $\mathscr{L}$ and $\widehat{\mathscr{L}}$ be given by \cref{eq-likelihood,eq-hatlikelihood}. If \cref{assum-compact,assum-smoothparam,assum-stability} are satisfied, then the sequence $L^{-1}\sup_{\bth\in\Theta}{\left|\widehat{\mathscr{L}}(\bth,\y^L)-\mathscr{L}(\bth,\y^L)\right|}$ converges to zero almost surely, as $L\to\infty$.
\end{lemma}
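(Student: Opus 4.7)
The strategy is to control the difference
\begin{equation*}
\widehat{\mathscr{L}}(\bth,\y^L) - \mathscr{L}(\bth,\y^L) = \sum_{n=1}^L \left[\hat\beps_{\bth,n}^T V_{\bth}^{-1} \hat\beps_{\bth,n} - \beps_{\bth,n}^T V_{\bth}^{-1} \beps_{\bth,n}\right]
\end{equation*}
uniformly in $\bth$ by an almost-surely summable series in $n$, whence dividing by $L$ drives the supremum to zero. To this end, I would use the algebraic identity
\begin{equation*}
\hat\beps_{\bth,n}^T V_{\bth}^{-1} \hat\beps_{\bth,n} - \beps_{\bth,n}^T V_{\bth}^{-1} \beps_{\bth,n} = (\hat\beps_{\bth,n}-\beps_{\bth,n})^T V_{\bth}^{-1} \hat\beps_{\bth,n} + \beps_{\bth,n}^T V_{\bth}^{-1} (\hat\beps_{\bth,n}-\beps_{\bth,n}),
\end{equation*}
so that by Cauchy--Schwarz and the deterministic uniform bound $\sup_{\bth}\|V_{\bth}^{-1}\| \leq C$ from \cref{lemma-assum123consequences}, \labelcref{lemma-assum123consequences-V}, each summand is dominated by $2C\,\|\hat\beps_{\bth,n}-\beps_{\bth,n}\|\bigl(\|\hat\beps_{\bth,n}\|+\|\beps_{\bth,n}\|\bigr)$.

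Next, I would invoke \cref{lemma-propinnovations}, \labelcref{lemma-propinnovationsexpconv}, which supplies a (random) constant $C$ and a deterministic $\rho\in(0,1)$ with $\sup_{\bth}\|\hat\beps_{\bth,n}-\beps_{\bth,n}\| \leq C\rho^n$ almost surely, and the triangle inequality $\sup_{\bth}\|\hat\beps_{\bth,n}\| \leq C\rho^n + \sup_{\bth}\|\beps_{\bth,n}\|$. Writing $U_n := \sup_{\bth\in\Theta}\|\beps_{\bth,n}\|$, this yields the almost sure bound
\begin{equation*}
\sup_{\bth\in\Theta}\bigl|\widehat{\mathscr{L}}(\bth,\y^L)-\mathscr{L}(\bth,\y^L)\bigr| \leq C\sum_{n=1}^L \rho^n\bigl(\rho^n + U_n\bigr),
\end{equation*}
so the task is reduced to showing that $\sum_{n=1}^\infty \rho^n U_n < \infty$ almost surely.

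For this last step, I would use \cref{lemma-propinnovations}, \labelcref{lemma-propinnovations-expbounded}, which gives $U_n \leq \sum_{\nu=0}^\infty C\rho^\nu \|\Y_{n-\nu}\|$. Since $\Y$ has finite second (hence first) moments by \cref{def-ssmgen} and is stationary, $\E U_n$ is finite and constant in $n$, with $\E U_n \leq C(1-\rho)^{-1}\E\|\Y_0\|$. Tonelli's theorem then gives $\E\sum_{n=1}^\infty \rho^n U_n < \infty$, so the series converges almost surely. The right-hand side of the displayed bound is thus almost surely bounded as a function of $L$, and dividing by $L$ and letting $L\to\infty$ gives the claim.

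The main subtlety is that $\hat\beps_{\bth,n}$ depends on the initial value $\hat\X_{\bth,\text{initial}}$ and is therefore neither stationary nor uniformly controlled in $L^p$; the triangle-inequality step that replaces $\|\hat\beps_{\bth,n}\|$ by the stationary $U_n$ plus an exponentially decaying correction is what circumvents this. A second minor point is that the constant $C$ in \cref{lemma-propinnovations}, \labelcref{lemma-propinnovationsexpconv}, is random, but since $\rho$ is deterministic and $U_n$ is integrable, pulling $C$ outside the sum causes no trouble in the Tonelli/Fubini argument.
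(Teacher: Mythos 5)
Your proof is correct and follows essentially the same route as the paper's: the same decomposition of the quadratic forms, the uniform bound on $V_{\bth}^{-1}$, and the exponential approximation and moving-average bounds from \cref{lemma-propinnovations}. The only difference is the concluding step, where you establish almost sure finiteness of $\sum_n \rho^n \sup_{\bth}\|\beps_{\bth,n}\|$ via Tonelli (and handle $\hat\beps$ by the triangle inequality), whereas the paper shows $\rho^n\sup_{\bth}\|\beps_{\bth,n}\|\to 0$ a.s.\ by Markov plus Borel--Cantelli and concludes via the Ces\`aro mean; both are valid, and yours in fact yields the slightly stronger conclusion that the supremum of the difference is almost surely bounded uniformly in $L$.
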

\begin{proof}
We first observe that
\begin{equation*}
\left|\widehat{\mathscr{L}}(\bth,\y^L)-\mathscr{L}(\bth,\y^L)\right| = \sum_{n=1}^L{\left[\left(\hat\beps_{\bth,n}-\beps_{\bth,n}\right)^T V_{\bth}^{-1}\hat\beps_{\bth,n}+\beps_{\bth,n}^T V_{\bth}^{-1}\left(\hat\beps_{\bth,n}-\beps_{\bth,n}\right)\right]}.
\end{equation*}
The fact that, by \cref{lemma-assum123consequences}, \labelcref{lemma-assum123consequences-V}, there exists a constant $C$ such that $\left\|V_{\bth}^{-1}\right\|\leq C$ implies that
\begin{align}
\label{eq-supLestimate}
\frac{1}{L}\sup_{\bth\in\Theta}{\left|\widehat{\mathscr{L}}(\bth,\y^L)-\mathscr{L}(\bth,\y^L)\right|}\leq& \frac{C}{L}\sum_{n=1}^L{\rho^n\left[\sup_{\bth\in\Theta}{\left\|\hat\beps_{\bth,n}\right\|}+\sup_{\bth\in\Theta}{\left\|\beps_{\bth,n}\right\|}\right]}.
\end{align}
\Cref{lemma-propinnovations}, \labelcref{lemma-propinnovations-expbounded} and the assumption that $\Y$ has finite second moments imply that $\E\sup_{\bth\in\Theta}\left\|\beps_{\bth,n}\right\|$ is finite. Applying Markov's inequality, one sees that, for every positive $\epsilon$,
\begin{equation*}
\sum_{n=1}^\infty{\Pb\left(\rho^n\sup_{\bth\in\Theta}\left\|\beps_{\bth,n}\right\|\geq\epsilon\right)}\leq\E\sup_{\bth\in\Theta}\left\|\beps_{\bth,1}\right\|\sum_{n=1}^\infty{\frac{\rho^n}{\epsilon}}<\infty,
\end{equation*}
because $\rho<1$. The Borel--Cantelli Lemma shows that $\rho^n\sup_{\bth\in\Theta}\left\|\beps_{\bth,n}\right\|$ converges to zero almost surely, as $n\to\infty$. In an analogous way one can show that $\rho^n\sup_{\bth\in\Theta}\left\|\hat\beps_{\bth,n}\right\|$ converges to zero almost surely, and, consequently, so does the Ces\`aro mean in \cref{eq-supLestimate}. The claim thus follows.
\end{proof}

\begin{lemma}
\label{lemma-uniformconvergence}
If \cref{assum-compact,assum-smoothparam,assum-stability,assum-2moments} hold, then, with probability one, the sequence of random functions $\bth\mapsto L^{-1}\widehat{\mathscr{L}}(\bth,\y^L)$ converges, as $L$ tends to infinity, uniformly in $\bth$ to the limiting function $\mathscr{Q}:\Theta\to\R$ defined by
\begin{equation}
\label{eq-DefQ}
\mathscr{Q}(\bth) = d\log(2\pi) + \log\det V_{\bth} + \E \beps_{\bth,1}^T V_{\bth}^{-1}\beps_{\bth,1}.
\end{equation} 
\end{lemma}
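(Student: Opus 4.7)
My plan is to peel off the quantities that vanish or match exactly, then apply a uniform ergodic theorem to a single stationary family. First, by \cref{lemma-likelihoodequiv}, $L^{-1}\sup_{\bth\in\Theta}|\widehat{\mathscr{L}}(\bth,\y^L)-\mathscr{L}(\bth,\y^L)|\to 0$ almost surely, so it suffices to prove the uniform convergence of $L^{-1}\mathscr{L}(\bth,\y^L)$ to $\mathscr{Q}(\bth)$. The constant-in-$n$ terms $d\log(2\pi)+\log\det V_{\bth}$ summed $L$ times reproduce exactly the first two summands of $\mathscr{Q}(\bth)$, so the task reduces to showing
\[
\sup_{\bth\in\Theta}\left|L^{-1}\sum_{n=1}^L f_n(\bth)-\E f_1(\bth)\right|\xrightarrow[L\to\infty]{\text{a.s.}} 0,\qquad f_n(\bth)\coloneqq\beps_{\bth,n}^T V_{\bth}^{-1}\beps_{\bth,n}.
\]

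Pointwise convergence is immediate from the \ET: by \cref{assum-2moments} and the moving average representation \labelcref{eq-MArepY}, $\Y=\Y_{\bth_0}$ is strictly stationary and ergodic, and the map $(\Y_m)_{m\leq n}\mapsto \beps_{\bth,n}$ from \cref{lemma-propinnovations}, \labelcref{lemma-propinnovations-expbounded} is measurable, so $(f_n(\bth))_{n\in\Z}$ is stationary and ergodic. Its expectation is finite because $\|V_{\bth}^{-1}\|\leq C$ by \cref{lemma-assum123consequences} and the $L^2$ norm of $\beps_{\bth,n}$ is controlled via the exponentially decaying coefficients $c_{\bth,\nu}$.

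To upgrade to uniform convergence I will use the standard compactness/continuity argument. The two ingredients I need are (a) continuity of $\bth\mapsto f_n(\bth)$, which follows from the continuity of $c_{\bth,\nu}$ (a consequence of \cref{assum-smoothparam} applied to $F_{\bth},H_{\bth},K_{\bth}$) together with the uniform exponential bound $\sup_{\bth}\|c_{\bth,\nu}\|\leq C\rho^\nu$ that makes the series for $\beps_{\bth,n}$ converge uniformly in $\bth$, combined with continuity of $\bth\mapsto V_{\bth}^{-1}$; and (b) an integrable envelope for the modulus of continuity. Concretely, set $w_n(\bth,\delta)\coloneqq\sup_{\|\bth'-\bth\|\leq\delta}|f_n(\bth')-f_n(\bth)|$. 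From \cref{lemma-propinnovations}, \labelcref{lemma-propinnovations-expbounded} and Cauchy--Schwarz,
\[
\sup_{\bth\in\Theta}\|\beps_{\bth,n}\|^2\leq 2\|\Y_n\|^2+2C^2\Bigl(\sum_{\nu\geq 1}\rho^\nu\Bigr)\sum_{\nu\geq 1}\rho^\nu\|\Y_{n-\nu}\|^2,
\]
which, together with $\|V_{\bth}^{-1}\|\leq C$, provides an integrable majorant for $\sup_{\bth}|f_n(\bth)|$ and hence for $w_n(\bth,\delta)$. The \DCT\ then yields $\E w_1(\bth,\delta)\downarrow 0$ as $\delta\downarrow 0$ for every $\bth\in\Theta$.

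The final step is a finite-cover argument: given $\varepsilon>0$, compactness of $\Theta$ (\cref{assum-compact}) produces points $\bth_1,\ldots,\bth_K$ and a radius $\delta>0$ such that $\Theta=\bigcup_k B(\bth_k,\delta)$ and $\E w_1(\bth_k,\delta)<\varepsilon$ for each $k$. Applying the \ET\ to the finitely many stationary ergodic sequences $(f_n(\bth_k))_n$ and $(w_n(\bth_k,\delta))_n$, any $\bth\in B(\bth_k,\delta)$ satisfies
\[
\limsup_{L\to\infty}\left|L^{-1}\sum_{n=1}^L f_n(\bth)-\E f_1(\bth)\right|\leq 2\E w_1(\bth_k,\delta)<2\varepsilon\quad\text{a.s.},
\]
uniformly in $\bth\in B(\bth_k,\delta)$; taking the maximum over $k$ and sending $\varepsilon\downarrow 0$ along a countable sequence establishes the almost sure uniform convergence. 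The main technical obstacle I expect is the verification of the integrable envelope and of the vanishing of $\E w_1(\bth,\delta)$ as $\delta\to 0$; both boil down to careful bookkeeping of the uniform exponential decay from \cref{lemma-propinnovations} and the uniform bound $\|V_{\bth}^{-1}\|\leq C$ from \cref{lemma-assum123consequences}.
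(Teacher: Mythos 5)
Your proposal is correct and follows essentially the same route as the paper: reduce to $\mathscr{L}$ via \cref{lemma-likelihoodequiv}, obtain pointwise convergence from ergodicity of $\beps_{\bth}$ and Birkhoff's \ET, and upgrade to uniform convergence by compactness. The only difference is that the paper delegates the last step to the argument of \citet[Theorem 16]{ferguson1996acourse}, whereas you write out that argument explicitly (integrable envelope from the uniform exponential bound on $c_{\bth,\nu}$, vanishing modulus of continuity via the \DCT, and a finite subcover), which is exactly what that reference does.
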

\begin{proof}
In view of the approximation results in \cref{lemma-likelihoodequiv}, it is enough to show that the sequence of random functions $\bth\mapsto L^{-1}\mathscr{L}(\bth,\y^L)$ converges uniformly to $\mathscr{Q}$. The proof of this assertion is based on the observation following \cref{assum-2moments} that for each $\bth\in\Theta$ the sequence $\beps_{\bth}$ is ergodic and its consequence that, by Birkhoff's \ET\ \citep[Theorem 6.2.1]{durrett2010probability}, the sequence $L^{-1}\mathscr{L}(\bth,\y^L)$ converges to $\mathscr{Q}(\bth)$ point\hyp{}wise. The stronger statement of uniform convergence follows from \cref{assum-compact} that $\Theta$ is compact by an argument analogous to the proof of \citet[Theorem 16]{ferguson1996acourse}.
\end{proof}
\begin{lemma}
\label{lemma-identifiability1} 
Assume that \cref{assum-stability,assum-2moments} as well as the first alternative of \cref{assum-identifiability1} hold. If $\beps_{\bth,1}=\beps_{\bth_0,1}$ almost surely, then $\bth=\bth_0$.
\end{lemma}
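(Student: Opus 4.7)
The plan is to convert the almost-sure scalar identity $\beps_{\bth,1}=\beps_{\bth_0,1}$ into an identity between two matrix-valued rational functions, and then invoke the first alternative of \cref{assum-identifiability1}. All series that will appear are absolutely convergent in $L^2(\Omega)$ thanks to the uniform exponential decay of the MA coefficients obtained in \cref{lemma-propinnovations}, \labelcref{lemma-propinnovations-expbounded} together with the stationarity and finite second moments of $\Y$.

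First, I would use \cref{lemma-propinnovations}, \labelcref{lemma-propinnovations-expbounded} to express both pseudo-innovations as linear filters in $\Y$ and subtract, so that
\begin{equation*}
0 \;=\; \beps_{\bth,1}-\beps_{\bth_0,1} \;=\; \sum_{\nu=1}^\infty \Delta_\nu \Y_{1-\nu} \quad \text{a.s.},
\end{equation*}
where $\Delta_\nu := c_{\bth,\nu}-c_{\bth_0,\nu} = -H_\bth(F_\bth-K_\bth H_\bth)^{\nu-1}K_\bth + H_{\bth_0}(F_{\bth_0}-K_{\bth_0}H_{\bth_0})^{\nu-1}K_{\bth_0}$ (note that the coefficient of $\Y_1$ cancels). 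Next I would plug the innovations representation \labelcref{eq-MAssmY} of $\Y$ from \cref{prop-Kalmanfilter}, \labelcref{prop-KalmanfilterMAY} into each $\Y_{1-\nu}$, regroup by the time index $k=\nu+\mu$, and arrive at
\begin{equation*}
0 \;=\; \sum_{k=1}^\infty A_k\,\beps_{\bth_0,1-k} \quad\text{a.s.},\qquad A_k \;=\; \Delta_k + \sum_{\nu=1}^{k-1}\Delta_\nu\, H_{\bth_0} F_{\bth_0}^{k-\nu-1}K_{\bth_0}.
\end{equation*}
Because the true innovations $\{\beps_{\bth_0,j}\}$ are pairwise orthogonal with common covariance $V_{\bth_0}$, and $V_{\bth_0}$ is positive definite by \cref{assum-stability}, \labelcref{assum-stabilityV}, taking the variance of this identity gives $\sum_{k\geq 1} A_k V_{\bth_0} A_k^T = 0$, hence $A_k=0$ for every $k\geq 1$. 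The recursion defining $A_k$ is strictly triangular in $\Delta$, so starting from $\Delta_1 = A_1 = 0$ a straightforward induction forces $\Delta_\nu=0$ for every $\nu\geq 1$.

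Finally, the vanishing of all the $\Delta_\nu$ is precisely the statement that the two rational matrix functions $z\mapsto H_\bth[\I_N-(F_\bth-K_\bth H_\bth)z]^{-1}K_\bth$ and $z\mapsto H_{\bth_0}[\I_N-(F_{\bth_0}-K_{\bth_0}H_{\bth_0})z]^{-1}K_{\bth_0}$ share the same power-series expansion around the origin, and hence coincide on all of $\C$ by analytic continuation. This directly contradicts the first alternative of \cref{assum-identifiability1} unless $\bth=\bth_0$, which is the desired conclusion. The main technical obstacle is the bookkeeping in the second step: one has to justify the rearrangement of the double series and the termwise substitution, both of which require the uniform exponential bounds from \cref{lemma-assum123consequences,lemma-propinnovations}. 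Conceptually, the key point is that positive definiteness of $V_{\bth_0}$ together with the causal (one-sided) structure of the innovations filter lets us upgrade an identity at a single time instant to identity of the transfer functions as analytic objects.
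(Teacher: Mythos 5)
Your proof is correct, but it follows a genuinely different route from the paper's. Both arguments begin with the same identity $\bzero_d=\sum_{\nu\geq 1}\Delta_\nu\Y_{1-\nu}$ obtained by subtracting the two filters, but the paper never inverts the moving average representation of $\Y$. It isolates only the leading nonzero coefficient of the difference of the two transfer functions, call it $C_{j_0}$: the identity shows that $C_{j_0}$ applied to the most recent observation appearing in the sum lies in the closed linear span of the strictly earlier $\Y_n$, so by the very definition of the innovations as orthogonal projection residuals one gets $C_{j_0}\beps_{\bth_0,1-j_0}=\bzero_d$, hence $C_{j_0}V_{\bth_0}C_{j_0}^T=0$ and $C_{j_0}=0$ by nonsingularity of $V_{\bth_0}$ --- an immediate contradiction with the choice of $j_0$. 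Your version instead substitutes the full innovations representation \labelcref{eq-MAssmY} into every $\Y_{1-\nu}$, which obliges you to justify the rearrangement of a double series and then to solve an infinite triangular system to conclude that \emph{every} $\Delta_\nu$ vanishes; the payoff is that you recover the identity of the two transfer functions as analytic objects and need only the pairwise uncorrelatedness of the true innovations rather than their characterization as prediction errors. Both routes rest on the same two pillars --- the one-sided (causal) structure of the filters and the positive definiteness of $V_{\bth_0}$ from \cref{assum-stability}, \labelcref{assum-stabilityV} --- and the rearrangement you defer is indeed licensed by the uniform exponential bounds of \cref{lemma-propinnovations} and \cref{lemma-assum123consequences}, so there is no gap; the paper's argument is simply shorter because it only needs to kill the leading coefficient.
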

\begin{proof}
Assume, for the sake of contradiction, that $\bth\neq\bth_0$. By \cref{assum-identifiability1}, there exist matrices $C_j\in M_d(\R)$, $j\in\N_0$, such that, for $|z|\leq 1$,
\begin{equation}
\label{eq-difftransferfunctions}
H_{\bth}\left[\I_N-(F_{\bth}-K_{\bth}H_{\bth})z\right]^{-1}K_{\bth} - H_{\bth_0}\left[\I_N-(F_{\bth_0}-K_{\bth_0}H_{\bth_0}z\right]^{-1}K_{\bth_0} = \sum_{j=j_0}^\infty C_j z^j,
\end{equation}
where $C_{j_0}\neq 0$, for some $j_0\geq0$. Using \cref{eq-innoSSMpolynomial} and the assumed equality of $\beps_{\bth,1}$ and $\beps_{\bth_0,1}$, this implies that $\bzero_d=\sum_{j=j_0}^\infty{C_j \Y_{j_0-j}}$ almost surely; in particular, the random variable $C_{j_0}\Y_0$ is equal to a linear combination of the components of $\Y_n$, $n < 0$. It thus follows from the interpretation of the innovations sequence $\beps_{\bth_0}$ as linear prediction errors for the process $\Y$ that $C_{j_0}\beps_{\bth_0,0}$ is equal to zero, which implies that $\E C_{j_0}\beps_{\bth_0,0}\beps_{\bth_0,0}^TC_{j_0}^T = C_{j_0}V_{\bth_0}C_{j_0}^T = 0_d$. Since $V_{\bth_0}$ is assumed to be non\hyp{}singular, this implies that the matrix $C_{j_0}$ is the null matrix, a contradiction to \cref{eq-difftransferfunctions}.
\end{proof}

\begin{lemma}
\label{lemma-uniqueminimumQ}
Under \cref{assum-compact,assum-smoothparam,assum-stability,assum-identifiability1}, the function $\mathscr{Q}:\Theta\to\R$, as defined in \cref{eq-DefQ}, has a unique global minimum at $\bth_0$.
\end{lemma}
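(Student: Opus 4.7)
The plan is to show that $\mathscr{Q}(\bth) \geq \mathscr{Q}(\bth_0)$ with equality forcing $\bth=\bth_0$, by exploiting the fact that $\beps_{\bth_0,1}$ is the \emph{true} innovation and is therefore orthogonal to the past of $\Y$.

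First I would decompose the pseudo-innovation as $\beps_{\bth,1} = \beps_{\bth_0,1} + (\beps_{\bth,1}-\beps_{\bth_0,1})$. Using \cref{lemma-propinnovations} \labelcref{lemma-propinnovations-expbounded}, both $\beps_{\bth,1}$ and $\beps_{\bth_0,1}$ can be written as $\Y_1$ plus an (exponentially convergent) linear combination of $\Y_{1-\nu}$, $\nu\geq 1$; hence the difference $\beps_{\bth,1}-\beps_{\bth_0,1}$ lies in the closed linear span of $\{\Y_\nu:\nu\leq 0\}$. Since $\beps_{\bth_0,1}$ is orthogonal to this span by \cref{DefInno-QML}, the two summands are uncorrelated, and I obtain
\begin{equation*}
\E\beps_{\bth,1}\beps_{\bth,1}^T = V_{\bth_0} + \Delta_{\bth}, \qquad \Delta_{\bth} \coloneqq \E\bigl[(\beps_{\bth,1}-\beps_{\bth_0,1})(\beps_{\bth,1}-\beps_{\bth_0,1})^T\bigr] \in \SS_d^+(\R).
\end{equation*}
Substituting this into \cref{eq-DefQ} and using the trace trick $\bu^T A \bu = \tr(A\bu\bu^T)$, I rewrite
\begin{equation*}
\mathscr{Q}(\bth) = d\log(2\pi) + \log\det V_{\bth} + \tr\bigl(V_{\bth}^{-1}V_{\bth_0}\bigr) + \tr\bigl(V_{\bth}^{-1}\Delta_{\bth}\bigr).
\end{equation*}

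The second step is a standard Gaussian-likelihood/KL-divergence argument: for any $A,B\in\SS_d^{++}(\R)$, the eigenvalues $\lambda_1,\ldots,\lambda_d$ of $A^{-1}B$ are positive and
\begin{equation*}
\log\det A + \tr(A^{-1}B) - \log\det B - d = \sum_{i=1}^d (\lambda_i - 1 - \log\lambda_i) \geq 0,
\end{equation*}
with equality if and only if every $\lambda_i=1$, i.e.\ $A=B$. Applied with $A=V_{\bth}$ and $B=V_{\bth_0}$, this gives $\log\det V_{\bth} + \tr(V_{\bth}^{-1}V_{\bth_0}) \geq \log\det V_{\bth_0} + d$ with equality iff $V_{\bth}=V_{\bth_0}$. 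Moreover, since $V_{\bth}^{-1}\in\SS_d^{++}(\R)$ by \cref{lemma-assum123consequences} \labelcref{lemma-assum123consequences-V} and $\Delta_{\bth}\in\SS_d^+(\R)$, one has $\tr(V_{\bth}^{-1}\Delta_{\bth})\geq 0$, with equality iff $\Delta_{\bth}=0$, which in turn is equivalent to $\beps_{\bth,1}=\beps_{\bth_0,1}$ almost surely. Noting that $\mathscr{Q}(\bth_0) = d\log(2\pi) + \log\det V_{\bth_0} + d$, I conclude $\mathscr{Q}(\bth)\geq\mathscr{Q}(\bth_0)$ for every $\bth\in\Theta$.

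Finally, to upgrade to a unique minimum, suppose $\mathscr{Q}(\bth)=\mathscr{Q}(\bth_0)$ for some $\bth\in\Theta$. The two inequalities above must then both be equalities, giving $V_{\bth}=V_{\bth_0}$ and $\beps_{\bth,1}=\beps_{\bth_0,1}$ almost surely. Since $V_{\bth}=V_{\bth_0}$, the second alternative of \cref{assum-identifiability1} is ruled out, so the first alternative (distinct transfer functions) must hold whenever $\bth\neq\bth_0$. But then \cref{lemma-identifiability1} applies and forces $\bth=\bth_0$. The main (mild) obstacle is the orthogonality argument in step one, which hinges on correctly interpreting $\beps_{\bth_0}$ as the Hilbert-space projection residual of \cref{DefInno-QML} rather than merely as the output of the Kalman recursion; everything else reduces to the elementary matrix-inequality computation above.
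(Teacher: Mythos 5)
Your proposal is correct and follows essentially the same route as the paper's proof: the same orthogonal decomposition of $\beps_{\bth,1}$ against the past of $\Y$, the same matrix inequality $\tr M-\log\det M\geq d$ (which you phrase via the eigenvalues of $V_{\bth}^{-1}V_{\bth_0}$), and the same equality-case analysis invoking \cref{lemma-identifiability1}. Rewriting the quadratic forms as $\tr(V_{\bth}^{-1}V_{\bth_0})+\tr(V_{\bth}^{-1}\Delta_{\bth})$ is only a cosmetic repackaging of the paper's expectation terms, not a different argument.
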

\begin{proof}
We first observe that the difference $\beps_{\bth,1} - \beps_{\bth_0,1}$ is an element of the Hilbert space spanned by the random variables $\{\Y_n,n\leq 0\}$, and that $\beps_{\bth_0,1}$ is, by definition, orthogonal to this space. Thus, the expectation $\E \left(\beps_{\bth,1} - \beps_{\bth_0,1}\right)^T V_{\bth}^{-1}\beps_{\bth_0,1}$ is equal to zero and, consequently, $\mathscr{Q}(\bth)$ can be written as
\begin{equation*}
\mathscr{Q}(\bth) = d\log(2\pi) + \E \beps_{\bth_0,1}^T V_{\bth}^{-1} \beps_{\bth_0,1} + \E\left(\beps_{\bth,1} - \beps_{\bth_0,1}\right)^T V_{\bth}^{-1} \left(\beps_{\bth,1} - \beps_{\bth_0,1}\right) + \log\det V_{\bth}.
\end{equation*}
In particular, since $\E \beps_{\bth_0,1}^T V_{\bth_0}^{-1} \beps_{\bth_0,1}=\tr\left[V_{\bth_0}^{-1}\E\beps_{\bth_0,1}\beps_{\bth_0,1}^T\right]=d$, it follows that $\mathscr{Q}(\bth_0) = \log\det V_{\bth_0}+d(1+\log(2\pi))$. The elementary inequality $x-\log x\geq 1$, for $x>0$, implies that $\tr M-\log\det M\geq d$ for all symmetric positive definite $d\times d$ matrices $M\in\SS^{++}_d(\R)$ with equality if and only if $M=\I_d$. Using this inequality for $M=V_{\bth_0}^{-1}V_{\bth}$, we thus obtain that, for all $\bth\in\Theta$,
\begin{align*}
\mathscr{Q}(\bth) - \mathscr{Q}(\bth_0) = &d + \tr\left[V_{\bth}^{-1}\E\beps_{\bth_0,1}\beps_{\bth_0,1}^T\right]  - \log\det \left(V_{\bth_0}^{-1}V_{\bth}\right) \\*
  & +\E\left(\beps_{\bth,1} - \beps_{\bth_0,1}\right)^T V_{\bth}^{-1} \left(\beps_{\bth,1} - \beps_{\bth_0,1}\right) - \E \beps_{\bth_0,1}^T V_{\bth_0}^{-1} \beps_{\bth_0,1} \\
  \geq & \E\left(\beps_{\bth,1} - \beps_{\bth_0,1}\right)^T V_{\bth}^{-1} \left(\beps_{\bth,1} - \beps_{\bth_0,1}\right) \geq  0.
\end{align*}
It remains to argue that this chain of inequalities is in fact a strict inequality if $\bth\neq\bth_0$. If $V_{\bth}\neq V_{\bth_0}$, the first inequality is strict, and we are done. If $V_{\bth} = V_{\bth_0}$, the first alternative of \cref{assum-identifiability1} is satisfied. The second inequality is an equality if and only if $\beps_{\bth,1}=\beps_{\bth_0,1}$ almost surely, which, by \cref{lemma-identifiability1}, implies that $\bth=\bth_0$. Thus, the function $\mathscr{Q}$ has a unique global minimum at $\bth_0$.
\end{proof}

\begin{proof}[Proof of \cref{theorem-consistency}]
We shall first show that the sequence $L^{-1}\widehat{\mathscr{L}}(\hat\bth^L,\y^L)$, $L\in\N$, converges almost surely to the deterministic number $\mathscr{Q}(\bth_0)$ as the sample size $L$ tends to infinity. Assume that, for some positive number $\epsilon$, it holds that $\sup_{\bth\in\Theta}\left|L^{-1}\widehat{\mathscr{L}}(\bth,\y^L)-\mathscr{Q}(\bth)\right|\leq\epsilon$. It then follows that 
\begin{align*}
L^{-1}\widehat{\mathscr{L}}(\hat\bth^L,\y^L) \leq L^{-1}\widehat{\mathscr{L}}(\bth_0,\y^L)\leq \mathscr{Q}(\bth_0)+\epsilon\quad\text{and}\quad L^{-1}\widehat{\mathscr{L}}(\hat\bth^L,\y^L)\geq \mathscr{Q}(\hat\bth^L)-\epsilon \geq \mathscr{Q}(\bth_0) - \epsilon,
\end{align*}
where it was used that $\hat\bth^L$ is defined to minimize $\widehat{\mathscr{L}}(\cdot,\y^L)$ and that, by \cref{lemma-uniqueminimumQ}, $\bth_0$ minimizes $\mathscr{Q}(\cdot)$. In particular, it follows that $\left|L^{-1}\widehat{\mathscr{L}}(\hat\bth^L,\y^L)-\mathscr{Q}(\bth_0)\right|\leq\epsilon$. This observation and \cref{lemma-uniformconvergence} immediately imply that
\begin{equation}
\label{eq-convLhatbthObth0}
\Pb\left(\frac{1}{L}\widehat{\mathscr{L}}(\hat\bth^L,\y^L)\xrightarrow[L\to\infty]{}\mathscr{Q}(\bth_0)\right) \geq \Pb\left(\sup_{\bth\in\Theta}\left|\frac{1}{L}\widehat{\mathscr{L}}(\bth,\y^L)-\mathscr{Q}(\bth)\right|\xrightarrow[L\to\infty]{}0\right) = 1.
\end{equation}
To complete the proof of the theorem, it suffices to show that, for every neighbourhood $U$ of $\bth_0$, with probability one, $\hat\bth^L$ will eventually lie in $U$.  For every such neighbourhood $U$ of $\bth_0$, we define the real number $\delta(U)\coloneqq\inf_{\bth\in\Theta\backslash U}\mathscr{Q}(\bth)-\mathscr{Q}(\bth_0)$, which is strictly positive by \cref{lemma-uniqueminimumQ}. Then the following sequence of inequalities holds:
\begin{align*}
&\Pb\left(\hat\bth^L\xrightarrow[L\to\infty]{}\bth_0\right) = \Pb\left(\forall U\,\exists L_0:\hat\bth^L\in U\quad\forall L>L_0\right)\\
  \geq& \Pb\left(\forall U\,\exists L_0:\mathscr{Q}(\hat\bth^L)-\mathscr{Q}(\bth_0)<\delta(U)\quad\forall L>L_0\right)\\
  \geq& \Pb\left(\forall U\,\exists L_0:\left|L^{-1}\widehat{\mathscr{L}}(\hat\bth^L,\y^L)-\mathscr{Q}(\bth_0)\right|<\delta(U)/2 \quad\text{and}\quad \left|L^{-1}\widehat{\mathscr{L}}(\hat\bth^L,\y^L)-\mathscr{Q}(\hat\bth^L)\right|<\delta(U)/2 \quad\forall L>L_0\right)
\end{align*}
The last probability is equal to one by \cref{eq-convLhatbthObth0,lemma-uniformconvergence}.
\end{proof}

\subsection[Proof of asymptotic normality]{Proof of Theorem \ref{theorem-hatbthLCLT} -- Asymptotic normality}
\label{section-QMLDTSSMnormality}

In this section we prove the assertion of \cref{theorem-hatbthLCLT}, that the distribution of $L^{1/2}\left(\hat\bth^L-\bth_0\right)$ converges to a normal random variable with mean zero and covariance matrix $\Xi=J^{-1}IJ^{-1}$, an expression for which is given in \cref{eq-DefIJDT}. 

The idea behind the proof of the asymptotic normality essentially is that the strong mixing property implies various central limit theorems. As already said, the QML estimators are intuitively close to moment based estimators. So the main task is to show that the central limit results translate into asymptotic normality of the estimators. The individual steps in the following again lengthy and technical proof are:
\begin{enumerate}
 \item First we extend the result that the  pseudo-innovations $\hat\beps_{\bth}$ obtained via the Kalman filter from the finite sample $\y^L$ approximate the true pseudo-innovations $\beps_{\bth}$ (obtainable from the steady state Kalman filter in theory) well to their first and second derivatives; see \cref{lemma-propinnovationspartial}.
\item The first derivatives of the QL function $\mathscr{L}$ obtained from the pseudo-innovations $\beps_{\bth}$ have a finite variance for every possible parameter $\bth$; see \cref{lemma-partialLfinitevariance}.
\item Certain fourth moments (viz. covariances of scalar products of the vectors of values of the process  at different times) of a strongly mixing process with  $4+\delta$ finite moments can be uniformly bounded using the strong mixing coefficients; see \cref{lemma-davydov}.
\item The covariance matrix of the gradients of the QL function $\mathscr{L}$ divided by the number of observations converges for every possible parameter $\bth$; see \cref{lemma-convIbth}.
\item The result that the quasi likelihood (QL) function $\widehat{\mathscr{L}}$ obtained from the finite sample $\y^L$ (via $\hat\beps_{\bth}$) converges for the sample size $L\to\infty$ uniformly in the parameter space to the true QL function $\mathscr{L}$ (obtained from the  pseudo-innovations $\beps_{\bth}$) is extended to the first and second derivatives; see \cref{lemma-likelihoodequivpartial}.
\item The previous steps allow to show that the QL function $\widehat{\mathscr{L}}$ at the true parameter $\bth_0$  divided by the number of observations is asymptotically normal with limiting variance determined in step 4; see \cref{lemma-asymnormaldeltaLmixingY}.
\item The limit of the rescaled second derivative of the QL function $\widehat{\mathscr{L}}$ at the true parameter exists, equals the Fisher information and is invertible; see \cref{lemma-convJ}.
\item A zeroth order Taylor expansion of the gradient of the QL function $\widehat{\mathscr{L}}$ divided by the number of observations at the true parameter $\bth_0$  is combined with the asymptotic normality result of step 4 and the already established strong consistency of the QML estimator. Using the third derivatives of $\widehat{\mathscr{L}}$, the error of the Taylor approximation expressed in terms of second derivatives of $\widehat{\mathscr{L}}$ is controlled and using the result of step 7 the asymptotic normality of the QML estimator is deduced.
\end{enumerate}

First, we collect basic properties of $\partial_m\beps_{\bth,n}$ and $\partial_m\hat\beps_{\bth,n}$, where $\partial_m=\partial/\partial\vartheta^m$ denotes the partial derivative with respect to the $m$th component of $\bth$; the following lemma mirrors \cref{lemma-propinnovations}.

\begin{lemma}
\label{lemma-propinnovationspartial}
If \cref{assum-compact,assum-smoothparam,assum-smoothparam3,assum-stability} hold, the pseudo\hyp{}innovations sequences $\beps_{\bth}$ and $\hat\beps_{\bth}$ defined by the Kalman filter equations \labelcref{eq-innoSSM,eq-innohatSSM} have the following properties. 
\begin{enumerate}[i)]
 \item\label{lemma-propinnovationspartialexpconv}If, for an integer $k\in\{1,\ldots,r\}$, the initial values $\hat\X_{\bth,\text{initial}}$ are such that both $\sup_{\bth\in\Theta}\left\|\hat\X_{\bth,\text{initial}}\right\|$ and $\sup_{\bth\in\Theta}\left\|\partial_k\hat\X_{\bth,\text{initial}}\right\|$ are almost surely finite, then, with probability one, there exist positive numbers $C$ and $\rho<1$, such that $\sup_{\bth\in\Theta}\left\|\partial_k\beps_{\bth,n}-\partial_k\hat\beps_{\bth,n}\right\|\leq C\rho^n$, $n\in\N$.
 \item\label{lemma-propinnovationspartialexpbounded} For each $k\in\{1,\ldots,r\}$, the random sequences $\partial_k\beps_{\bth}$ are linear functions of $\Y$, i.\,e.\ there exist matrix sequences  $\left(c^{(k)}_{\bth,\nu}\right)_{\nu\geq 1}$, such that $\partial_k\beps_{\bth,n} = \sum_{\nu=1}^\infty c^{(k)}_{\bth,\nu}\Y_{n-\nu}$. The matrices $c^{(k)}_{\bth,\nu}$ are uniformly exponentially bounded, i.\,e.\ there exist positive numbers $C$ and $\rho<1$, such that $\sup_{\bth\in\Theta}\left\|c^{(k)}_{\bth,\nu}\right\|\leq C \rho^\nu$,$\nu\in\N$.
\item\label{lemma-propinnovationspartial2expconv}If, for integers $k,l\in\{1,\ldots,r\}$, the initial values $\hat\X_{\bth,\text{initial}}$ are such that $\sup_{\bth\in\Theta}\left\|\hat\X_{\bth,\text{initial}}\right\|$, as well as $\sup_{\bth\in\Theta}\left\|\partial_i\hat\X_{\bth,\text{initial}}\right\|$, $i\in\{k,l\}$, and $\sup_{\bth\in\Theta}\left\|\partial^2_{k,l}\hat\X_{\bth,\text{initial}}\right\|$ are almost surely finite, then, with probability one, there exist positive numbers $C$ and $\rho<1$, such that $\sup_{\bth\in\Theta}\left\|\partial^2_{k,l}\beps_{\bth,n}-\partial^2_{k,l}\hat\beps_{\bth,n}\right\|\leq C\rho^n$, $n\in\N$.
 \item\label{lemma-propinnovationspartial2expbounded} For each $k,l\in\{1,\ldots,r\}$, the random sequences $\partial^2_{k,l}\beps_{\bth}$ are linear functions of $\Y$, i.\,e.\ there exist matrix sequences  $\left(c^{(k,l)}_{\bth,\nu}\right)_{\nu\geq 1}$, such that $\partial^2_{k,l}\beps_{\bth,n} = \sum_{\nu=1}^\infty c^{(k,l)}_{\bth,\nu}\Y_{n-\nu}$. The matrices $c^{(k,l)}_{\bth,\nu}$ are uniformly exponentially bounded, i.\,e.\ there exist positive numbers $C$ and $\rho<1$, such that $\sup_{\bth\in\Theta}\left\|c^{(k,l)}_{\bth,\nu}\right\|\leq C \rho^\nu$, $\nu\in\N$.
\end{enumerate}
\end{lemma}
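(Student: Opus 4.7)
The proof extends \cref{lemma-propinnovations} to first and second derivatives with respect to $\bth$, and the common strategy is to differentiate the moving average representation of $\beps_{\bth,n}$ and the explicit formula for $\beps_{\bth,n}-\hat\beps_{\bth,n}$ obtained in the proof of \cref{lemma-propinnovations}, then to control the resulting terms uniformly in $\bth$ using \cref{lemma-assum123consequences} and \cref{assum-smoothparam3}. Before starting, I would note that the sensitivity analysis of the discrete-time algebraic Riccati equation (cited after \cref{assum-smoothparam3}) lifts the $C^3$-smoothness of $F_{(\cdot)},H_{(\cdot)},Q_{(\cdot)},S_{(\cdot)},R_{(\cdot)}$ to $\Omega_{(\cdot)},K_{(\cdot)},V_{(\cdot)}$, so that on the compact set $\Theta$ one has uniform bounds $\sup_{\bth}\|\partial_k M_{\bth}\|,\sup_{\bth}\|\partial^2_{k,l}M_{\bth}\|<\infty$ for every $M\in\{F,H,K\}$; these are used freely below.

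For parts (ii) and (iv) I would start from the MA representation $\beps_{\bth,n}=\Y_n+\sum_{\nu=1}^\infty c_{\bth,\nu}\Y_{n-\nu}$ with $c_{\bth,\nu}=-H_{\bth}(F_{\bth}-K_{\bth}H_{\bth})^{\nu-1}K_{\bth}$ established in \cref{lemma-propinnovations}~(\labelcref{lemma-propinnovations-expbounded}) and differentiate term by term. The product rule applied to the matrix power $(F_{\bth}-K_{\bth}H_{\bth})^{\nu-1}$ produces $\nu-1$ summands, and two further summands arise from $\partial_k H_{\bth}$ and $\partial_k K_{\bth}$. Each summand contains at least $\nu-2$ factors of $(F_{\bth}-K_{\bth}H_{\bth})$, which are bounded uniformly in $\bth$ by $C\rho^{\nu-2}$ thanks to \cref{lemma-assum123consequences}~(\labelcref{lemma-assum123consequences-FminusKH}). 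Setting $c^{(k)}_{\bth,\nu}\coloneqq\partial_k c_{\bth,\nu}$ I would thus obtain $\sup_{\bth}\|c^{(k)}_{\bth,\nu}\|\leq C\nu\rho^{\nu-2}\leq C'\tilde\rho^\nu$ for any $\tilde\rho\in(\rho,1)$, which proves (ii). Part (iv) follows by applying the product rule a second time, yielding $O(\nu^2)$ summands each bounded by $C\rho^{\nu-3}$, hence $\sup_{\bth}\|c^{(k,l)}_{\bth,\nu}\|\leq C\nu^2\rho^{\nu-3}\leq C''\tilde\rho^\nu$. The uniform exponential bounds also justify the exchange of differentiation and summation.

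For parts (i) and (iii) I would reuse the identity
\begin{equation*}
\beps_{\bth,n}-\hat\beps_{\bth,n}=H_{\bth}(F_{\bth}-K_{\bth}H_{\bth})^{n-1}\bigl(\X_{\bth,0}^{\mathrm{st}}-\hat\X_{\bth,\text{initial}}\bigr)
\end{equation*}
already appearing in the proof of \cref{lemma-propinnovations}, where $\X_{\bth,0}^{\mathrm{st}}$ denotes the stationary Kalman filter state obtained from the infinite moving average. Differentiating once with respect to $\vartheta^k$, the same bookkeeping as above produces $O(n)$ summands, each containing at least $n-O(1)$ factors of $(F_{\bth}-K_{\bth}H_{\bth})$ and hence bounded uniformly in $\bth$ by $C\rho^{n-O(1)}$. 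The remaining factors are derivatives of $H_{\bth},K_{\bth},\hat\X_{\bth,\text{initial}}$ and of $\X_{\bth,0}^{\mathrm{st}}$; the first two are uniformly bounded by the smoothness assumptions, the third is almost surely uniformly bounded by hypothesis, and the fourth has a finite supremum over $\Theta$ by part (ii) together with $\E\|\Y_0\|<\infty$. The polynomial prefactor $n$ is absorbed into a slightly larger rate $\tilde\rho<1$, giving the almost sure bound $C\tilde\rho^n$; part (iii) follows in exactly the same way after applying the product rule a second time and using the additional hypothesis on $\sup_{\bth}\|\partial^2_{k,l}\hat\X_{\bth,\text{initial}}\|$.

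The main technical obstacle is the combinatorial bookkeeping required when differentiating the matrix power $(F_{\bth}-K_{\bth}H_{\bth})^{\nu-1}$ once and twice, together with the need to keep every resulting term bounded uniformly in $\bth\in\Theta$ and to interchange differentiation with the infinite summations. The single observation that makes the whole scheme go through is that any polynomial growth in the exponent (in $\nu$ for parts (ii), (iv) and in $n$ for parts (i), (iii)) is absorbed by infinitesimally enlarging the geometric rate, so that the uniform exponential decay structure of \cref{lemma-propinnovations} survives differentiation.
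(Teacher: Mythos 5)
Your proof is correct and takes essentially the same route as the paper's, whose entire argument is the one\hyp{}sentence remark that everything is analogous to \cref{lemma-propinnovations} once one knows that $\partial_k\left[H_{\bth}(F_{\bth}-K_{\bth}H_{\bth})^{\nu-1}K_{\bth}\right]$ and $\partial^2_{k,l}\left[H_{\bth}(F_{\bth}-K_{\bth}H_{\bth})^{\nu-1}K_{\bth}\right]$ are uniformly exponentially bounded; you supply precisely the product\hyp{}rule bookkeeping and the absorption of the polynomial prefactors $\nu$ and $\nu^2$ into a slightly larger geometric rate that justifies this claim. The only cosmetic imprecision is that the almost sure uniform boundedness of $\sup_{\bth\in\Theta}\|\partial_k\X^{\mathrm{st}}_{\bth,0}\|$ follows from the exponential decay of $\partial_k\left[(F_{\bth}-K_{\bth}H_{\bth})^{\nu}K_{\bth}\right]$ together with $\E\|\Y_0\|<\infty$ rather than from part (ii) verbatim, but the argument is identical.
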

\begin{proof}
Analogous to the proof of \cref{lemma-propinnovations}, repeatedly interchanging differentiation and summation, and using the fact that, as a consequence of \cref{assum-compact,assum-smoothparam,assum-stability,assum-smoothparam3}, both $\partial_k\left[H_{\bth} \left(F_{\bth}-K_{\bth}H_{\bth}\right)^{\nu-1}K_{\bth}\right]$ and $\partial^2_{k,l}\left[H_{\bth} \left(F_{\bth}-K_{\bth}H_{\bth}\right)^{\nu-1}K_{\bth}\right]$ are uniformly exponentially bounded.
\end{proof}

\begin{lemma}
\label{lemma-partialLfinitevariance}
For each $\bth\in\Theta$ and every $m=1,\ldots,r$, the random variable $\partial_m\mathscr{L}(\bth,\y^L)$ has finite variance.
\end{lemma}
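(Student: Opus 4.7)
The plan is to reduce the assertion to showing that each summand of $\partial_m \mathscr{L}(\bth,\y^L)$ lies in $L^2$, and then to bound the relevant $L^4$ norms via the infinite moving-average representations from \cref{lemma-propinnovations,lemma-propinnovationspartial}, combined with the finite $(4+\delta)$-th moment hypothesis \cref{assum-4moments}.

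First I would differentiate, using $\partial_m V_\bth^{-1}=-V_\bth^{-1}(\partial_m V_\bth)V_\bth^{-1}$, to obtain
\begin{equation*}
\partial_m \mathscr{L}(\bth,\y^L) = L\,\partial_m\log\det V_{\bth} + \sum_{n=1}^{L}\left[2(\partial_m\beps_{\bth,n})^T V_{\bth}^{-1}\beps_{\bth,n} - \beps_{\bth,n}^T V_{\bth}^{-1}(\partial_m V_{\bth})V_{\bth}^{-1}\beps_{\bth,n}\right].
\end{equation*}
The first term is deterministic and therefore does not contribute to the variance. Since $L$ is finite, the elementary Cauchy--Schwarz bound $\Var(\sum_{n=1}^L X_n)\leq (\sum_{n=1}^L \sqrt{\Var X_n})^2$ reduces matters to showing that each summand of the random part is square integrable.

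Next I would invoke \cref{lemma-propinnovations}, \labelcref{lemma-propinnovations-expbounded} and \cref{lemma-propinnovationspartial}, \labelcref{lemma-propinnovationspartialexpbounded} to write
\begin{equation*}
\beps_{\bth,n} = \Y_n + \sum_{\nu=1}^{\infty} c_{\bth,\nu}\Y_{n-\nu}, \qquad \partial_m\beps_{\bth,n} = \sum_{\nu=1}^{\infty} c^{(m)}_{\bth,\nu}\Y_{n-\nu},
\end{equation*}
with exponentially decaying coefficient norms. Since \cref{assum-4moments} together with the moving-average representation \cref{eq-MArepY} and Minkowski's inequality yields $\|\Y_0\|_{L^{4}}<\infty$, a further application of Minkowski to the two series above gives $\|\beps_{\bth,n}\|_{L^4}$ and $\|\partial_m\beps_{\bth,n}\|_{L^4}$ bounded by constants (depending on $\bth$) times $\|\Y_0\|_{L^{4}}$, uniformly in $n$.

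Finally, combining these $L^4$ bounds with the uniform bound $\|V_{\bth}^{-1}\|\leq C$ from \cref{lemma-assum123consequences}, \labelcref{lemma-assum123consequences-V} and two applications of Cauchy--Schwarz, each quadratic and bilinear form appearing in the sum lies in $L^2$, as required. The calculation is essentially bookkeeping; the only mild subtlety is pushing the infinite MA representations into $L^4$, which is handled cleanly by Minkowski thanks to the exponential decay of the coefficients, so I do not anticipate any genuine obstacle.
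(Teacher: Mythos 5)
Your proposal is correct and follows essentially the same route as the paper, which proves the lemma by exactly this combination of the finite $(4+\delta)$-th moment assumption, the exponential decay of the coefficients $c_{\bth,\nu}$ and $c^{(m)}_{\bth,\nu}$ from \cref{lemma-propinnovations,lemma-propinnovationspartial}, and the Cauchy--Schwarz inequality; the explicit formula you derive for the summands is the same as \cref{eq-Defellmbthn}. Your write-up merely makes explicit the bookkeeping (Minkowski for the $L^4$ norms of the infinite moving averages, reduction of the variance of the finite sum to that of the summands) that the paper leaves implicit.
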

\begin{proof}
The claim follows from \cref{assum-4moments},  the exponential decay of the coefficient matrices $c_{\bth,\nu}$ and $c^{(m)}_{\bth,\nu}$ proved in \cref{lemma-propinnovations}, \labelcref{lemma-propinnovations-expbounded} and \cref{lemma-propinnovationspartial}, and the Cauchy--Schwarz inequality.
\end{proof}

We need the following covariance inequality which is a consequence of Davydov's inequality and the multidimensional generalization of an inequality used in the proof of \citet[Lemma 3]{francq1998}. For a positive real number $\alpha$, we denote by $\lfloor\alpha\rfloor$ the greatest integer smaller than or equal to $\alpha$.
\begin{lemma}
\label{lemma-davydov}
Let $\X$ be a strictly stationary, strongly mixing $d$\hyp{}dimensional stochastic process with finite $(4+\delta)$th moments for some $\delta>0$. Then there exists a constant $\kappa$, such that for all $d\times d$ matrices $A$, $B$, every $n\in\Z$, $\Delta\in\N$, and time indices $\nu, \nu'\in\N_0$, $\mu, \mu'=0,1\ldots,\lfloor \Delta/2\rfloor$, it holds that
\begin{equation}
\Cov\left(\X_{n-\nu}^TA\X_{n-\nu'};\X_{n+\Delta-\mu}^TB\X_{n+\Delta-\mu'}\right) \leq \kappa \left\|A\right\|\left\|B\right\| \left[\alpha_{\X}\left(\left\lfloor\frac{\Delta}{2}\right\rfloor\right)\right]^{\delta/(\delta+2)},
\end{equation}
where $\alpha_{\X}$ denote the strong mixing coefficients of the process $\X$.
\end{lemma}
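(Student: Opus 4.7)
The plan is to reduce the bound to a finite sum of scalar covariances, each handled by Davydov's inequality. Expanding the two quadratic forms coordinate-wise as $\X_{n-\nu}^T A \X_{n-\nu'} = \sum_{i,j=1}^d A_{ij} X^i_{n-\nu} X^j_{n-\nu'}$ and analogously for the second factor, bilinearity of the covariance reduces the claim to bounding each of the $d^4$ scalar covariances $\Cov(X^i_{n-\nu} X^j_{n-\nu'},\, X^k_{n+\Delta-\mu} X^l_{n+\Delta-\mu'})$ uniformly in $n$ and in the time offsets.

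The crucial observation is that $X^i_{n-\nu} X^j_{n-\nu'}$ is $\sigma(\X_s : s \leq n)$-measurable, since $\nu,\nu'\geq 0$, while $X^k_{n+\Delta-\mu} X^l_{n+\Delta-\mu'}$ is $\sigma(\X_s : s\geq n+\lceil\Delta/2\rceil)$-measurable, since $\mu,\mu'\leq\lfloor\Delta/2\rfloor$. The two $\sigma$-algebras are therefore separated in time by at least $\lceil\Delta/2\rceil\geq\lfloor\Delta/2\rfloor$, so the relevant strong mixing coefficient is bounded by $\alpha_{\X}(\lfloor\Delta/2\rfloor)$. I would then apply Davydov's covariance inequality to each scalar pair with H\"older exponents chosen to produce the claimed power $\delta/(\delta+2)$ of the mixing coefficient, using Cauchy--Schwarz, stationarity, and the $(4+\delta)$-moment hypothesis to uniformly bound the required $L^p$-norms of the factors.

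To finish, I would estimate $|A_{ij}|\leq\|A\|$, $|B_{kl}|\leq\|B\|$, sum over $i,j,k,l$, and absorb the resulting factor of $d^4$, the Davydov constant, and the uniform moment bounds into the single constant $\kappa$; stationarity ensures $\kappa$ is independent of $n$, $\Delta$, and the offsets. The only delicate point, which corresponds to the ``multidimensional generalization'' of the inequality from Francq (1998) mentioned in the statement, is that Davydov must be applied to the scalar products $X^i X^j$ rather than to the quadratic forms directly; this is what allows the exponent on the mixing coefficient to be expressed in terms of the $(4+\delta)$-moment of the components of $\X$, rather than requiring control of higher moments of the quadratic forms themselves.
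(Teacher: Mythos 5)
Your proposal is correct and follows essentially the same route as the paper's proof: coordinate-wise expansion of the quadratic forms, the observation that the two blocks of factors are measurable with respect to $\sigma$-algebras separated in time by at least $\lfloor\Delta/2\rfloor$, Davydov's inequality applied to the scalar products $X^iX^j$, and Cauchy--Schwarz together with stationarity to absorb the $d^4$ terms and the moment bounds into a single constant $\kappa$. The one detail you leave implicit --- the exact choice of H\"older exponents, which the paper takes as $p=q=2+\delta$ and which, via Cauchy--Schwarz, strictly speaking invokes a $(4+2\delta)$th moment --- is treated no more carefully in the paper itself, so this is not a gap relative to the published argument.
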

\begin{proof}
We first note that the bilinearity of $\Cov(\cdot;\cdot)$ and the elementary inequality $M_{ij}\leq\left\|M\right\|$, $M\in M_d(\R)$, imply that
\begin{align*}
\Cov\left(\X_{n-\nu}^TA\X_{n-\nu'};\X_{n+\Delta-\mu}^TB\X_{n+\Delta-\mu'}\right) 
\leq &d^4 \left\|A\right\| \left\|B\right\| \max_{i,j,s,t=1,\ldots,d}\Cov\left(X_{n-\nu}^iX_{n-\nu'}^j;X_{n+\Delta-\mu}^s X_{n+\Delta-\mu'}^t\right).
\end{align*}
Since the projection which maps a vector to one of its components is measurable, it follows that $X_{n-\nu}^iX_{n-\nu'}^j$ is measurable with respect to $\mathscr{F}_{-\infty}^{n-\min\{\nu,\nu'\}}$, the $\sigma$\hyp{}algebra generated by $\left\{\X_k:-\infty<k\leq n-\min\{\nu,\nu'\}\right\}$. Similarly, the random variable $X_{n+\Delta-\mu}^s X_{n+\Delta-\mu'}^t$ is measurable with respect to $\mathscr{F}_{n+\Delta-\max\left\{\mu,\mu'\right\}}^\infty$. Davydov's inequality \citep[Lemma 2.1]{davydov1968convergence} implies that there exists a universal constant $K$ such that
\begin{align*}
\Cov\left(X_{n-\nu}^iX_{n-\nu'}^j;X_{n+\Delta-\mu}^s X_{n+\Delta-\mu'}^t\right) \leq & K \left(\E\left| X^i_{n-\nu}  X^j_{n-\nu'}\right|^{2+\delta}\right)^{1/(2+\delta)} \left(\E\left|X^s_{n+\Delta-\mu}  X^t_{n+\Delta-\mu'}\right|^{2+\delta}\right)^{1/(2+\delta)}\\*
      &\qquad\qquad\times \left[\alpha_{\X}\left(\Delta-\max\left\{\mu,\mu'\right\}+\min\left\{\nu,\nu'\right\}\right)\right]^{\delta/(2+\delta)}\\
 \leq & \kappa \left[\alpha_{\X}\left(\left\lfloor\frac{\Delta}{2}\right\rfloor\right)\right]^{\delta/(2+\delta)},
\end{align*}
where it was used that $\Delta-\max\left\{\mu,\mu'\right\}+\min\left\{\nu,\nu'\right\}\geq \lfloor \Delta/2\rfloor$, and that strong mixing coefficients are non\hyp{}increasing. By the Cauchy--Schwarz inequality the constant $\kappa$ satisfies
\begin{equation*}
\kappa = K \left(\E\left| X^i_{n-\nu}  X^j_{n-\nu'}\right|^{2+\delta}\right)^{1/(2+\delta)} \left(\E\left|X^s_{n+\Delta-\mu}  X^t_{n+\Delta-\mu'}\right|^{2+\delta}\right)^{1/(2+\delta)}\leq K\left(\E\left\|\X_1\right\|^{4+2\delta}\right)^{\frac{2}{2+\delta}},
\end{equation*}
and thus does not depend on $n,\nu,\nu',\mu,\mu',\Delta$, nor on $i,j,s,t$.
\end{proof}

The next lemma is a multivariate generalization of \citet[Lemma 3]{francq1998}. In the proof of \citet[Lemma 4]{mainassara2011estimating} this generalization is used without providing details and, more importantly without imposing \cref{assum-mixing} about the strong mixing of $\Y$. In view of the derivative terms $\partial_m\beps_{\bth,n}$ in \cref{eq-Defellmbthn} it is not immediately clear how the result of the lemma  can be proved under the mere assumption of strong mixing of the innovations sequence $\beps_{\bth_0}$. We therefore think that a detailed account, properly generalizing the arguments in the original paper \citep{francq1998} to the multidimensional setting, is justified.
\begin{lemma}
\label{lemma-convIbth}
Suppose that \cref{assum-compact,assum-smoothparam,assum-stability,assum-4moments,assum-mixing} hold. Then, for every $\bth\in\Theta$, the sequence $L^{-1}\Var \nabla_{\bth}\mathscr{L}(\bth,\y^L)$ of deterministic matrices converges to a limit $I(\bth)$ as $L\to\infty$.
\end{lemma}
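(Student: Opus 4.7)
The plan is to reduce the convergence of the normalized variance of the score to a summability statement about the autocovariance function of the score. Differentiating the summand in \cref{eq-likelihood} gives
\[
\partial_m l_{\bth,n} = \partial_m\log\det V_{\bth} + 2(\partial_m\beps_{\bth,n})^T V_{\bth}^{-1}\beps_{\bth,n} + \beps_{\bth,n}^T(\partial_m V_{\bth}^{-1})\beps_{\bth,n},
\]
so that, after dropping the deterministic first term and invoking the moving-average representations from \cref{lemma-propinnovations}, \labelcref{lemma-propinnovations-expbounded} and \cref{lemma-propinnovationspartial}, \labelcref{lemma-propinnovationspartialexpbounded}, the centred score component is a convergent double sum $\sum_{\nu,\nu'\geq 0}\Y_{n-\nu}^T A^{(m)}_{\bth,\nu,\nu'}\Y_{n-\nu'} - \E[\,\cdot\,]$, with coefficients bounded by $\|A^{(m)}_{\bth,\nu,\nu'}\|\leq C\rho^{\nu+\nu'}$ for some $\rho<1$ depending only on $\bth$. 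Since $\Y$ is strictly stationary, $\gamma_{m,m'}(\Delta)\coloneqq \Cov(\partial_m l_{\bth,n},\partial_{m'}l_{\bth,n+\Delta})$ is independent of $n$, whence
\[
L^{-1}\bigl[\Var\nabla_{\bth}\mathscr{L}(\bth,\y^L)\bigr]_{m,m'} = \sum_{|\Delta|<L}\Bigl(1-\tfrac{|\Delta|}{L}\Bigr)\gamma_{m,m'}(\Delta).
\]
Hence it suffices to prove $\sum_{\Delta\in\Z}|\gamma_{m,m'}(\Delta)|<\infty$; dominated convergence then delivers the limit $I(\bth)_{m,m'} = \sum_{\Delta\in\Z}\gamma_{m,m'}(\Delta)$.

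To bound $|\gamma_{m,m'}(\Delta)|$ for large $\Delta$, I would split each of the two double sums representing $\partial_m l_{\bth,n}$ and $\partial_{m'}l_{\bth,n+\Delta}$ into a \emph{near} part, indexed by $\nu,\nu'\leq \lfloor\Delta/2\rfloor$, and a \emph{far} remainder. The near/near covariance is a finite bilinear combination of terms of the form appearing on the left of \cref{lemma-davydov}, so that lemma, together with the exponential bound on $\|A^{(m)}_{\bth,\nu,\nu'}\|$, yields a bound $C\,[\alpha_\Y(\lfloor\Delta/2\rfloor)]^{\delta/(\delta+2)}$, uniform in $\Delta$. The cross and far/far covariances are handled by Cauchy--Schwarz: each involves a factor whose coefficient mass is at most $C\rho^{\lfloor\Delta/2\rfloor}$, while its $L^2$ norm is finite thanks to the $(4+\delta)$th moment hypothesis \cref{assum-4moments} (the bilinear structure requires a fourth moment of $\Y$, which follows from that of $(\ZZ^T,\W^T)^T$ via \cref{eq-MArepY}). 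Combining these estimates gives
\[
|\gamma_{m,m'}(\Delta)| \leq C\bigl([\alpha_\Y(\lfloor\Delta/2\rfloor)]^{\delta/(\delta+2)} + \rho^{\lfloor\Delta/2\rfloor}\bigr),
\]
which is summable in $\Delta$ by \cref{assum-mixing}.

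I expect the main technical obstacle to be the rigorous bookkeeping of the truncation: \cref{lemma-davydov} is stated for a single pair of bilinear forms with \emph{finite} time\hyp{}index ranges, whereas here both score components are genuinely infinite\hyp{}order bilinear in $\Y$, and straight application would require controlling infinitely many Davydov bounds at once. The exponential decay furnished by \cref{lemma-propinnovations,lemma-propinnovationspartial} is exactly what is needed to discard the far tails uniformly in $\Delta$ and then invoke \cref{lemma-davydov} only on the finite near parts. Once the uniform summable majorant for $\gamma_{m,m'}(\Delta)$ is in place, the passage to the limit and the identification of $I(\bth)$ are routine.
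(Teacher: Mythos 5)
Your proposal is correct and follows essentially the same route as the paper's proof: reduce to absolute summability of the score autocovariances via the Ces\`aro representation, expand the score in the moving\hyp{}average representations of $\beps_{\bth}$ and $\partial_m\beps_{\bth}$ with exponentially decaying coefficients, split the resulting quadruple sum at $\lfloor\Delta/2\rfloor$, bound the ``near'' block with \cref{lemma-davydov} and the ``far'' remainder with Cauchy--Schwarz, and conclude with \cref{assum-mixing}. The only cosmetic difference is that you package the score as a single bilinear form $\sum_{\nu,\nu'}\Y_{n-\nu}^TA^{(m)}_{\bth,\nu,\nu'}\Y_{n-\nu'}$ rather than keeping the four covariance terms arising from bilinearity separate, which changes nothing of substance.
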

\begin{proof}
It is enough to show that, for each $\bth\in\Theta$, and all $k,l=1,\ldots,r$, the sequence of real\hyp{}valued random variables $I^{(k,l)}_{\bth,L}$, defined by
\begin{equation}
\label{eq-DefIlimn}
I^{(k,l)}_{\bth,L} = \frac{1}{L}\sum_{n=1}^L\sum_{t=1}^L\Cov\left(\ell^{(k)}_{\bth,n},\ell^{(l)}_{\bth,t}\right),
\end{equation}
converges to a limit as $L$ tends to infinity, where $\ell^{(m)}_{\bth,n} = \partial_m l_{\bth,n}$ is the partial derivative of the $n$th term in expression \labelcref{eq-likelihood} for $\mathscr{L}(\bth,\y^L)$. It follows from well\hyp{}known differentiation rules for matrix functions \citep[see, e.\,g.][Sections 6.5 and 6.6]{horn1994topics} that
\begin{equation}
\label{eq-Defellmbthn}
\ell^{(m)}_{\bth,n} = \tr\left[V_{\bth}^{-1}\left(\I_d - \beps_{\bth,n}\beps_{\bth,n}^T V_{\bth}^{-1}\right)\left(\partial_m V_{\bth}\right)\right] + 2\left(\partial_m\beps_{\bth,n}^T\right)V_{\bth}^{-1}\beps_{\bth,n}.
\end{equation}
By the assumed stationarity of the processes $\beps_{\bth}$, the covariances in the sum \labelcref{eq-DefIlimn} depend only on the difference $n-t$. For the proof of the lemma it suffices to show that the sequence $\mathfrak{c}^{(k,l)}_{\bth,\Delta}=\Cov\left(\ell^{(k)}_{\bth,n},\ell^{(l)}_{n+\Delta,\bth}\right)$, $\Delta\in\Z$, is absolutely summable for all $k,l=1,\ldots,r$, because then
\begin{equation}
\label{eq-convIklbthN}
I^{(k,l)}_{\bth,L}=\frac{1}{L}\sum_{\Delta=-L}^L\left(L-|\Delta|\right)\mathfrak{c}^{(k,l)}_{\bth,\Delta}\xrightarrow[L\to\infty]{}\sum_{\Delta\in\Z}\mathfrak{c}^{(k,l)}_{\bth,\Delta}<\infty.
\end{equation}
In view of the of the symmetry $\mathfrak{c}^{(k,l)}_{\bth,\Delta}=\mathfrak{c}^{(k,l)}_{\bth,-\Delta}$, it is no restriction to assume that $\Delta\in\N$. In order to show that $\sum_\Delta\left|\mathfrak{c}^{(k,l)}_{\bth,\Delta}\right|$ is finite, we first use the bilinearity of $\Cov(\cdot;\cdot)$ to estimate
\begin{align*}
\left|\mathfrak{c}^{(k,l)}_{\bth,\Delta}\right|\leq & 4\left|\Cov\left(\left(\partial_k\beps_{\bth,n}^T\right)V_{\bth}^{-1}\beps_{\bth,n} ;\left(\partial_l\beps_{\bth,n+\Delta}^T\right)V_{\bth}^{-1}\beps_{\bth,n+\Delta}  \right)\right|\\*
  &+ \left|\Cov\left(\tr\left[V_{\bth}^{-1}\beps_{\bth,n}\beps_{\bth,n}^TV_{\bth}^{-1}\partial_kV_{\bth}\right];\tr\left[V_{\bth}^{-1}\beps_{\bth,n+\Delta}\beps_{\bth,n+\Delta}^TV_{\bth}^{-1}\partial_lV_{\bth}\right]\right)\right| +\\*
  &+ 2\left|\Cov\left(\tr\left[V_{\bth}^{-1}\beps_{\bth,n}\beps_{\bth,n}^TV_{\bth}^{-1}\partial_kV_{\bth}\right];   \left(\partial_l\beps_{\bth,n+\Delta}^T\right)V_{\bth}^{-1}\beps_{\bth,n+\Delta}   \right)\right| +\\*
  &+  2\left|\Cov\left(\left(\partial_k\beps_{\bth,n}^T\right)V_{\bth}^{-1}\beps_{\bth,n} ;\tr\left[V_{\bth}^{-1}\beps_{\bth,n+\Delta}\beps_{\bth,n+\Delta}^TV_{\bth}^{-1}\partial_lV_{\bth}\right]  \right)\right|.
\end{align*}
Each of these four terms can be analysed separately. We give details only for the first one, the arguments for the other three terms being similar. Using the moving average representations for $\beps_{\bth}$, $\partial_k\beps_{\bth}$ and $\partial_l\beps_{\bth}$, it follows that
\begin{align*}
&\left|\Cov\left(\left(\partial_k\beps_{\bth,n}^T\right)V_{\bth}^{-1}\beps_{\bth,n} ;\left(\partial_l\beps_{\bth,n+\Delta}^T\right)V_{\bth}^{-1}\beps_{\bth,n+\Delta}  \right)\right|\\
=&\sum_{\nu,\nu',\mu,\mu'=0}^\infty\left|\Cov\left(\Y_{n-\nu}^Tc^{(k),T}_{\bth,\nu}V_{\bth}^{-1}c_{\bth,\nu'}\Y_{n-\nu'},\Y_{n+\Delta-\mu}^Tc^{(l),T}_{\bth,\mu}V_{\bth}^{-1}c_{\bth,\mu'}\Y_{n+\Delta-\mu'}\right)\right|.
\end{align*}
This sum can be split into one part $I^+$ in which at least one of the summation indices $\nu$, $\nu'$, $\mu$ and $\mu'$ exceeds $\Delta/2$, and one part $I^-$ in which all summation indices are less than or equal to $\Delta/2$. Using the fact that, by the Cauchy--Schwarz inequality,
\begin{align*}
&\left|\Cov\left(\Y_{n-\nu}^Tc^{(k),T}_{\bth,\nu}V_{\bth}^{-1}c_{\bth,\nu'}\Y_{n-\nu'};\Y_{n+\Delta-\mu}^Tc^{(l),T}_{\bth,\mu}V_{\bth}^{-1}c_{\bth,\mu'}\Y_{n+\Delta-\mu'}\right)\right| \leq  \left\|V_{\bth}^{-1}\right\|^2\left\|c^{(k)}_{\bth,\nu}\right\|\left\|c_{\bth,\nu'}\right\|\left\|c^{(l)}_{\bth,\mu'}\right\|\left\|c_{\bth,\mu'}\right\|\E\left\|\Y_n\right\|^4,
\end{align*}
it follows from \cref{assum-4moments} and the uniform exponential decay of $\left\|c_{\bth,\nu}\right\|$ and $\left\|c^{(m)}_{\bth,\nu}\right\|$ proved in \cref{lemma-propinnovations}, \labelcref{lemma-propinnovations-expbounded} and \cref{lemma-propinnovationspartial}, \labelcref{lemma-propinnovationspartialexpbounded} that there exist constants $C$ and $\rho<1$ such that
\begin{align}
\label{eq-boundcklbthr1}
I^+=&\sum_{\substack{\nu,\nu',\mu,\mu'=0\\\max\{\nu,\nu',\mu,\mu'\}>\Delta/2}}^\infty\left|\Cov\left(\Y_{n-\nu}^Tc^{(k),T}_{\bth,\nu}V_{\bth}^{-1}c_{\bth,\nu'}\Y_{n-\nu'},\Y_{n+\Delta-\mu}^Tc^{(l),T}_{\bth,\mu}V_{\bth}^{-1}c_{\bth,\mu'}\Y_{n+\Delta-\mu'}\right)\right|\leq  C \rho^{\Delta/2}.
\end{align}
For the contribution from all indices smaller than or equal to $\Delta/2$, \cref{lemma-davydov} implies that there exists a constant $C$ such that
\begin{align}
\label{eq-boundcklbthr2}
I^-=&\sum_{\nu,\nu',\mu,\mu'=0}^{\lfloor \Delta/2\rfloor}\left|\Cov\left(\Y_{n-\nu}^Tc^{(k),T}_{\bth,\nu}V_{\bth}^{-1}c_{\bth,\nu'}\Y_{n-\nu'},\Y_{n+\Delta-\mu}^Tc^{(l),T}_{\bth,\mu}V_{\bth}^{-1}c_{\bth,\mu'}\Y_{n+\Delta-\mu'}\right)\right| \leq C \left[\alpha_{\Y}\left(\left\lfloor \frac{\Delta}{2} \right\rfloor\right)\right]^{\delta/(2+\delta)}.
\end{align}
It thus follows from \cref{assum-mixing} that the sequences $\left|\mathfrak{c}_{\bth,\Delta}^{(k,l)}\right|$, $\Delta\in\N$, are summable, and \cref{eq-convIklbthN} completes the proof of the lemma.
\end{proof}

\begin{lemma}
\label{lemma-likelihoodequivpartial}
Let $\mathscr{L}$ and $\widehat{\mathscr{L}}$ be given by \cref{eq-likelihood,eq-hatlikelihood}. Assume that \cref{assum-compact,assum-smoothparam,assum-smoothparam3,assum-stability} are satisfied. Then the following hold.
\begin{enumerate}[i)]
 \item \label{lemma-likelihoodequivpartial1} For each $m=1,\ldots,r$, the sequence $L^{-1/2}\sup_{\bth\in\Theta}{\left|\partial_m\widehat{\mathscr{L}}(\bth,\y^L)-\partial_m\mathscr{L}(\bth,\y^L)\right|}$ converges to zero in probability, as $L\to\infty$.
 \item \label{lemma-likelihoodequivpartial2} For all $k,l=1,\ldots,r$, the sequence $L^{-1}\sup_{\bth\in\Theta}{\left|\partial^2_{k,l}\widehat{\mathscr{L}}(\bth,\y^L)-\partial^2_{k,l}\mathscr{L}(\bth,\y^L)\right|}$ converges to zero almost surely, as $L\to\infty$.
\end{enumerate}
\end{lemma}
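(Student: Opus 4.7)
The strategy mirrors the proof of \cref{lemma-likelihoodequiv}, but replacing $l_{\bth,n}$ with its first and second partial derivatives. First I would write out, via the matrix calculus rules already used to derive \cref{eq-Defellmbthn}, explicit expressions
\begin{align*}
\partial_m l_{\bth,n} =& \tr\left[V_{\bth}^{-1}\left(\I_d - \beps_{\bth,n}\beps_{\bth,n}^T V_{\bth}^{-1}\right)\left(\partial_m V_{\bth}\right)\right] + 2\left(\partial_m\beps_{\bth,n}\right)^T V_{\bth}^{-1}\beps_{\bth,n},\\
\partial^2_{k,l} l_{\bth,n} =& \mathcal{T}_1(\bth) + \mathcal{T}_2(\bth)\left[\beps_{\bth,n}\right] + \mathcal{T}_3(\bth)\left[\beps_{\bth,n},\beps_{\bth,n}\right] + \mathcal{T}_4(\bth)\left[\partial\beps_{\bth,n},\beps_{\bth,n}\right] + \mathcal{T}_5(\bth)\left[\partial\beps_{\bth,n},\partial\beps_{\bth,n}\right] + \mathcal{T}_6(\bth)\left[\partial^2\beps_{\bth,n},\beps_{\bth,n}\right],
\end{align*}
where each $\mathcal{T}_i(\bth)$ is a multilinear form whose coefficients are continuous functions of $V_{\bth}^{-1}$ and the derivatives of $V_{\bth}$ up to order two; under \cref{assum-compact,assum-smoothparam,assum-smoothparam3} and \cref{lemma-assum123consequences}, each of these is uniformly bounded in $\bth\in\Theta$. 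The corresponding expressions for $\partial_m\hat l_{\bth,n}$ and $\partial^2_{k,l}\hat l_{\bth,n}$ are obtained by replacing $\beps_{\bth,n}$, $\partial_k\beps_{\bth,n}$, and $\partial^2_{k,l}\beps_{\bth,n}$ throughout with $\hat\beps_{\bth,n}$, $\partial_k\hat\beps_{\bth,n}$, and $\partial^2_{k,l}\hat\beps_{\bth,n}$.

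Next I would expand the differences $\partial_m\hat l_{\bth,n}-\partial_m l_{\bth,n}$ and $\partial^2_{k,l}\hat l_{\bth,n}-\partial^2_{k,l}l_{\bth,n}$ telescopically, each resulting term containing exactly one factor of the form $\hat\beps-\beps$, $\partial_k\hat\beps-\partial_k\beps$, or $\partial^2_{k,l}\hat\beps-\partial^2_{k,l}\beps$, multiplied by another factor drawn from $\{\beps,\hat\beps,\partial\beps,\partial\hat\beps,\partial^2\beps,\partial^2\hat\beps\}$. Applying \cref{lemma-propinnovations}, \labelcref{lemma-propinnovationsexpconv}, together with the analogous statements in \cref{lemma-propinnovationspartial}, \labelcref{lemma-propinnovationspartialexpconv,lemma-propinnovationspartial2expconv}, the first factor is almost surely bounded uniformly in $\bth$ by $C\rho^n$ with $\rho<1$. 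For the second factor, the moving-average representations of \cref{lemma-propinnovations}, \labelcref{lemma-propinnovations-expbounded}, and \cref{lemma-propinnovationspartial}, \labelcref{lemma-propinnovationspartialexpbounded,lemma-propinnovationspartial2expbounded}, together with the assumption that $\Y$ has finite second moments (inherited from \cref{assum-2moments}), give
\begin{equation*}
\E\sup_{\bth\in\Theta}\left\|\beps_{\bth,n}\right\|<\infty,\qquad \E\sup_{\bth\in\Theta}\left\|\partial_k\beps_{\bth,n}\right\|<\infty,\qquad \E\sup_{\bth\in\Theta}\left\|\partial^2_{k,l}\beps_{\bth,n}\right\|<\infty,
\end{equation*}
and likewise for the $\hat\beps$ counterparts. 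Hence each summand in the telescoped difference is dominated uniformly in $\bth$ by $C\rho^n X_n$, where $X_n$ is a stationary random variable with finite first moment.

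With this in hand, exactly as in the proof of \cref{lemma-likelihoodequiv}, an application of Markov's inequality and the Borel--Cantelli lemma gives $\rho^n X_n\to 0$ almost surely. In fact, since $\sum_{n=1}^\infty\rho^n\E X_n<\infty$, monotone convergence yields $\sum_{n=1}^\infty\rho^n X_n<\infty$ almost surely, so the \emph{entire} (undivided) sum $\sum_{n=1}^L\rho^n X_n$ is almost surely bounded as $L\to\infty$. Dividing by $L^{1/2}$ in \labelcref{lemma-likelihoodequivpartial1} and by $L$ in \labelcref{lemma-likelihoodequivpartial2} therefore yields almost sure convergence to zero in both cases, which is even stronger than what is claimed in part \labelcref{lemma-likelihoodequivpartial1}.

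The only real obstacle is bookkeeping: the second derivative $\partial^2_{k,l}l_{\bth,n}$ produces a proliferation of terms when one differentiates the product $\beps_{\bth,n}\beps_{\bth,n}^T V_{\bth}^{-1}$ and the quadratic form $(\partial_m\beps_{\bth,n})^T V_{\bth}^{-1}\beps_{\bth,n}$, and one must verify that none of these terms escapes the above template. A uniform bound on $\|\partial_m V_{\bth}^{-1}\|$ and $\|\partial^2_{k,l}V_{\bth}^{-1}\|$ via \cref{assum-smoothparam3} and the compactness of $\Theta$, together with the fact (used implicitly above) that each product in the expansion contains at most two $\beps$-type factors so that second moments of $\Y$ suffice, handles this mechanical step.
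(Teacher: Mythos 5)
Your proposal is correct and is essentially the argument the paper intends: its proof of this lemma is literally the one line ``Similar to the proof of \cref{lemma-likelihoodequiv}'', and you carry out exactly that programme, telescoping the differences so that each term carries one exponentially small factor from \cref{lemma-propinnovations,lemma-propinnovationspartial} and one factor with finite first moment uniformly in $\bth$. You also correctly notice the one point where a blind transcription would be too weak --- the Ces\`aro argument of \cref{lemma-likelihoodequiv} only handles the $L^{-1}$ normalization, whereas the $L^{-1/2}$ scaling in part \labelcref{lemma-likelihoodequivpartial1} needs the almost sure summability of $\sum_n\rho^nX_n$ --- and you supply that, obtaining almost sure convergence, which is stronger than the convergence in probability claimed.
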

\begin{proof}
Similar to the proof of \cref{lemma-likelihoodequiv}.
\end{proof}

\begin{lemma}
\label{lemma-asymnormaldeltaLmixingY}
Under \cref{assum-compact,assum-smoothparam3,assum-stability,assum-4moments,assum-mixing}, the random variable $L^{-1/2}\nabla_{\bth}\widehat{\mathscr{L}}(\bth_0,\y^L)$ is asymptotically normally distributed with mean zero and covariance matrix $I(\bth_0)$.
\end{lemma}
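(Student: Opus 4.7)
The plan is to deduce the asserted CLT from a classical central limit theorem for strongly mixing stationary sequences applied to the score vector $(\ell_{\bth_0,n})_{n\in\Z}$ whose components are given in \cref{eq-Defellmbthn}. First I would replace $\widehat{\mathscr{L}}$ by $\mathscr{L}$: by \cref{lemma-likelihoodequivpartial}, \labelcref{lemma-likelihoodequivpartial1}, the difference $L^{-1/2}[\nabla_{\bth}\widehat{\mathscr{L}}(\bth_0,\y^L)-\nabla_{\bth}\mathscr{L}(\bth_0,\y^L)]$ tends to zero in probability, so by Slutsky's lemma the problem reduces to showing that $L^{-1/2}\sum_{n=1}^L\ell_{\bth_0,n}$ is asymptotically $\mathscr{N}(\bzero,I(\bth_0))$.

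Next I would collect the three properties of $(\ell_{\bth_0,n})_n$ needed for a mixing CLT. The sequence is strictly stationary because $(\beps_{\bth_0,n},\partial_m\beps_{\bth_0,n})_{m=1,\ldots,r}$ is. By \cref{lemma-propinnovations}, \labelcref{lemma-propinnovations-expbounded} and \cref{lemma-propinnovationspartial}, \labelcref{lemma-propinnovationspartialexpbounded}, both $\beps_{\bth_0,n}$ and $\partial_m\beps_{\bth_0,n}$ admit infinite-order moving-average representations in $\{\Y_k:k\leq n\}$, hence $\ell_{\bth_0,n}$ is measurable with respect to $\sigma(\Y_k:k\leq n)$ and the strong mixing coefficients of $(\ell_{\bth_0,n})_n$ are dominated by $\alpha_{\Y}$. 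Moreover, $\partial_m\beps_{\bth_0,n}$ lies in the closed span of $\{\Y_k:k<n\}$ and is therefore orthogonal to the true innovation $\beps_{\bth_0,n}$; combined with $\E\beps_{\bth_0,n}\beps_{\bth_0,n}^T=V_{\bth_0}$ this shows $\E\ell^{(m)}_{\bth_0,n}=0$. The correct asymptotic covariance structure is already provided by \cref{lemma-convIbth}, namely $L^{-1}\Var\sum_{n=1}^L\ell_{\bth_0,n}\to I(\bth_0)$.

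Finally I would invoke the central limit theorem. By the Cram\'er--Wold device it suffices to treat $\bc^T\ell_{\bth_0,n}$ for an arbitrary $\bc\in\R^r$. Because $\bc^T\ell_{\bth_0,n}$ is a quadratic form in the coordinates of $\{\Y_{n-\nu}\}_{\nu\geq 0}$ with exponentially decaying weights, \cref{assum-4moments} guarantees the existence of a moment of order $2+\delta'$ for some $\delta'>0$; together with the mixing-rate condition \cref{assum-mixing} transferred to $(\ell_{\bth_0,n})_n$, the classical CLT for strongly mixing stationary sequences of \citet{ibragimov1962some} (see also \citet{bradley2007introduction}) applies and yields convergence to $\mathscr{N}(0,\bc^TI(\bth_0)\bc)$, which is what is needed.

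The main technical obstacle I anticipate is that $\ell_{\bth_0,n}$ depends on the infinite past of $\Y$, whereas the mixing CLTs in the literature are typically phrased for processes that are themselves mixing; one must genuinely transfer the mixing property through a Bernoulli-type functional. I would resolve this by a truncation argument: approximate $\ell_{\bth_0,n}$ by a version $\ell^{(M)}_{\bth_0,n}$ that depends only on $\Y_{n-M},\ldots,\Y_n$, obtained by truncating the moving-average expansions of $\beps_{\bth_0,n}$ and $\partial_m\beps_{\bth_0,n}$ at lag $M$. The truncated sequence is $M$-dependent on $\Y$, so its strong mixing coefficients are bounded by $\alpha_{\Y}(\cdot-M)$ and \cref{assum-mixing} applies directly; the uniform exponential decay of the coefficients $c_{\bth_0,\nu}$ and $c^{(m)}_{\bth_0,\nu}$ from \cref{lemma-propinnovations,lemma-propinnovationspartial} together with the $(4+\delta)$ moment bound on $\Y$ ensures that the $L^2$ approximation error is uniform in $n$ and summable in a way that vanishes as $M\to\infty$, permitting the passage to the limit via a standard Bernstein blocking / Slutsky argument to conclude the full CLT for $\bc^T\ell_{\bth_0,n}$.
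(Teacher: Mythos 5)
Your proposal is correct and follows essentially the same route as the paper: reduce to $\mathscr{L}$ via \cref{lemma-likelihoodequivpartial}, truncate the moving-average expansions at lag $M$ so that the truncated score is a finite-window functional of $\Y$ with mixing coefficients bounded by $\alpha_{\Y}(\cdot-M)$, apply the Cram\'er--Wold device and the CLT of \citet{ibragimov1962some}, and pass to the limit $M\to\infty$ by a variance bound on the remainder plus the standard approximation lemma (the paper cites \citet[Proposition 6.3.9]{brockwell1991tst}). Two small points: the claim in your second paragraph that the mixing coefficients of $(\ell_{\bth_0,n})_n$ are dominated by $\alpha_{\Y}$ is not justified (the future $\sigma$\hyp{}algebra of the score is not contained in that of $\Y$), which is precisely why the truncation in your last paragraph is needed rather than optional; and for the remainder one needs $\Var\bigl(L^{-1/2}\sum_{n=1}^L(\ell_{\bth_0,n}-\ell^{(M)}_{\bth_0,n})\bigr)\leq C\rho^M$ \emph{uniformly in} $L$, which requires summing cross-covariances over the time lag and hence a second application of Davydov's inequality (as in \cref{lemma-convIbth}), not merely a per-term $L^2$ bound uniform in $n$.
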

\begin{proof}
Because of \cref{lemma-likelihoodequivpartial}, \labelcref{lemma-likelihoodequivpartial1} it is enough to show that $L^{-1/2}\nabla_{\bth}\mathscr{L}\left(\bth_0,\y^L\right)$ is asymptotically normally distributed with mean zero and covariance matrix $I(\bth_0)$. First, we note that
\begin{equation}
\label{eq-partialiL}
\partial_i\mathscr{L}(\bth,\y^L) =\sum_{n=1}^L\left\{ \tr\left[V_{\bth}^{-1}\left(\I_d - \beps_{\bth,n}\beps_{\bth,n}^T V_{\bth}^{-1}\right)\partial_i V_{\bth}\right] + 2\left(\partial_i\beps_{\bth,n}^T\right)V_{\bth}^{-1}\beps_{\bth,n} \right\},
\end{equation}
which holds for every component $i=1,\ldots,r$. The facts that $\E\beps_{\bth_0,n}\beps_{\bth_0,n}^T$ equals $V_{\bth_0}$, and that $\beps_{\bth_0,n}$ is orthogonal to the Hilbert space generated by $\{\Y_t,t<n\}$, of which $\partial_i\beps_{\bth,n}^T$ is an element, show that $\E\partial_i\mathscr{L}\left(\bth_0,\y^L\right)=0$. Using \cref{lemma-propinnovations}, \labelcref{lemma-propinnovations-expbounded}, expression \labelcref{eq-partialiL} can be rewritten as
\begin{align*}
\partial_i\mathscr{L}\left(\bth_0,\y^L\right) =& \sum_{n=1}^L\left[Y^{(i)}_{m,n}-\E Y^{(i)}_{m,n}\right]+\sum_{n=1}^L\left[Z^{(i)}_{m,n}-\E Z^{(i)}_{m,n}\right],
\end{align*}
where, for every $m\in\N$, the processes $Y^{(i)}_m$ and $Z^{(i)}_m$ are defined by
\begin{subequations}
\label{eq-DefYZimn}
\begin{align}
\label{eq-DefYimn}Y^{(i)}_{m,n} =&\tr\left[V_{\bth_0}^{-1}(\partial_iV_{\bth_0})\right] + \sum_{\nu,\nu'=0}^m\left\{-\tr\left[V_{\bth_0}^{-1} c_{\bth_0,\nu}\Y_{n-\nu}\Y_{n-\nu'}^Tc_{\bth,\nu'}^T V_{\bth_0}^{-1}(\partial_iV_{\bth_0})\right]+ 2\Y_{n-\nu}^Tc^{(i),T}_{\bth_0,\nu}V_{\bth_0}^{-1}c_{\bth_0,\nu'}\Y_{n-\nu'} \right\},\\
\label{eq-DefZimn}Z^{(i)}_{m,n} =& U^{(i)}_{m,n} + V^{(i)}_{m,n},
\end{align}
\end{subequations}
and
\begin{align*}
U^{(i)}_{m,n} =& \sum_{\nu=0}^\infty\sum_{\nu'=m+1}^\infty\left\{-\tr\left[V_{\bth_0}^{-1}c_{\bth_0,\nu}\Y_{n-\nu}\Y_{n-\nu'}^Tc_{\bth,\nu'}^T V_{\bth_0}^{-1}(\partial_iV_{\bth_0})\right] + 2\Y_{n-\nu}^Tc^{(i),T}_{\bth_0,\nu}V_{\bth_0}^{-1}c_{\bth_0,\nu'}\Y_{n-\nu'} \right\},\\
V^{(i)}_{m,n} =& \sum_{\nu=m+1}^\infty\sum_{\nu'=0}^m\left\{-\tr\left[V_{\bth_0}^{-1}c_{\bth_0,\nu}\Y_{n-\nu}\Y_{n-\nu'}^Tc_{\bth,\nu'}^T V_{\bth_0}^{-1}(\partial_iV_{\bth_0})\right] + 2\Y_{n-\nu}^Tc^{(i),T}_{\bth_0,\nu}V_{\bth_0}^{-1}c_{\bth_0,\nu'}\Y_{n-\nu'} \right\}.
\end{align*}
It is convenient to also introduce the notations
\begin{equation}
\label{eq-DefYmnZmn}
\mathcal{Y}_{m,n} = \left(\begin{array}{ccc}Y^{(1)}_{m,n} & \cdots & Y^{(r)}_{m,n}\end{array}\right)^T\quad\text{and}\quad \mathcal{Z}_{m,n} = \left(\begin{array}{ccc}Z^{(1)}_{m,n} & \cdots & Z^{(r)}_{m,n}\end{array}\right)^T.
\end{equation}
The rest of the proof proceeds in three steps: in the first we show that, for each natural number $m$, the sequence  $L^{-1/2}\sum_n \left[\mathcal{Y}_{m,n}-\E\mathcal{Y}_{m,n}\right]$ is asymptotically normally distributed with asymptotic covariance matrix $I_m$, and that $I_m$ converges to $I(\bth_0)$ as $m$ tends to infinity. We then prove that $L^{-1/2}\sum_n \left[\mathcal{Z}_{m,n}-\E\mathcal{Z}_{m,n}\right]$ goes to zero uniformly in $L$, as $m\to\infty$, and the last step is devoted to combining the first two steps to prove the asymptotic normality of $L^{-1/2}\nabla_{\bth}\mathscr{L}\left(\bth_0,\y^L\right)$.

\paragraph*{\bf Step 1}
Since $\Y$ is stationary, it is clear that $\mathcal{Y}_m$ is a stationary process. Moreover, the strong mixing coefficients $\alpha_{\mathcal{Y}_m}(k)$ of $\mathcal{Y}_m$ satisfy $\alpha_{\mathcal{Y}_m}(k) \leq \alpha_{\Y}(\max\{0,k-m\})$ because $\mathcal{Y}_{m,n}$ depends only on the finitely many values $\Y_{n-m},\ldots,\Y_{n}$ of $\Y$ \citep[see][Remark 1.8 b)]{bradley2007introduction}. In particular, by \cref{assum-mixing}, the strong mixing coefficients of the processes $\mathcal{Y}_m$ satisfy the summability condition $\sum_{k}[\alpha_{\mathcal{Y}_m}(k)]^{\delta/(2+\delta)}<\infty$. Since, by the Cram\'er--Wold device, weak convergence of the sequence $L^{-1/2}\sum_{n=1}^L{\left[\mathcal{Y}_{m,n}-\E\mathcal{Y}_{m,n}\right]}$ to a multivariate normal distribution with mean zero and covariance matrix $\Sigma$ is equivalent to the condition that, for every vector $\bu\in\R^r$, the sequence $L^{-1/2}\bu^T\sum_{n=1}^L{\left[\mathcal{Y}_{m,n}-\E\mathcal{Y}_{m,n}\right]}$ converges to a one-dimensional normal distribution with mean zero and variance $\bu^T\Sigma\bu$, we can apply the \CLT\ for univariate strongly mixing processes 
\citep[Theorem 1.7]{ibragimov1962some} to obtain that
\begin{equation}
\label{eq-CLTsumYmn}
\frac{1}{\sqrt{L}}\sum_{n=1}^L{\left[\mathcal{Y}_{m,n}-\E\mathcal{Y}_{m,n}\right]}\xrightarrow[L\to\infty]{d}\mathscr{N}(\bzero_r,I_m),\quad\text{where}\quad I_m = \sum_{\Delta\in\Z}\Cov\left(\mathcal{Y}_{m,n};\mathcal{Y}_{m,n+\Delta}\right).
\end{equation}
The claim that $I_m$ converges to $I(\bth_0)$ will follow if we can show that
\begin{equation}
\Cov\left(Y^{(k)}_{m,n};Y^{(l)}_{m,n+\Delta}\right)\xrightarrow[m\to\infty]{} \Cov\left(\ell^{(k)}_{\bth_0,n};\ell^{(l)}_{\bth_0,n+\Delta}\right),\quad\forall \Delta\in\Z,
\end{equation}
and that $\left|\Cov\left(Y^{(k)}_{m,n};Y^{(l)}_{m,n+\Delta}\right)\right|$ is dominated by an absolutely summable sequence. For the first condition, we note that the bilinearity of $\Cov(\cdot;\cdot)$ implies that
\begin{align*}
\Cov\left(Y^{(k)}_{m,n};Y^{(l)}_{m,n+\Delta}\right) - \Cov\left(\ell^{(k)}_{\bth_0,n};\ell^{(l)}_{\bth_0,n+\Delta}\right) =& \Cov\left(Y^{(k)}_{m,n};Y^{(l)}_{m,n+\Delta}-\ell^{(l)}_{\bth_0,n+\Delta}\right) +  \Cov\left(Y^{(k)}_{m,n} - \ell^{(k)}_{\bth_0,n};\ell^{(l)}_{\bth_0,n+\Delta}\right).
\end{align*}
These two terms can be treated in a similar manner so we restrict our attention to the second one. The definitions of $Y^{(i)}_{m,n}$ (\cref{eq-DefYimn}) and $\ell^{(i)}_{\bth,n}$ (\cref{eq-DefIlimn}) allow us to compute
\begin{align*}
Y^{(k)}_{m,n} - \ell^{(k)}_{\bth_0,n}=&\sum_{\substack{\nu,\nu'\\\max\{\nu,\nu'\}>m}}\left[\tr\left[V_{\bth_0}^{-1}c_{\bth_0,\nu}\Y_{n-\nu}\Y_{n-\nu'}^Tc_{\bth,\nu'}^T V_{\bth_0}^{-1}\partial_iV_{\bth_0}\right] - 2\Y_{n-\nu}^Tc^{(i),T}_{\bth_0,\nu}V_{\bth_0}^{-1}c_{\bth_0,\nu'}\Y_{n-\nu'} \right].
\end{align*}
As a consequence of the Cauchy--Schwarz inequality, \cref{assum-4moments} and the exponential bounds in \cref{lemma-propinnovations}, \labelcref{lemma-propinnovationsexpconv}, we therefore obtain that $\Var\left(Y^{(k)}_{m,n} - \ell^{(k)}_{\bth_0,n}\right)\leq C\rho^m$ independent of $n$. The $L^2$\hyp{}continuity of $\Cov(\cdot;\cdot)$ thus implies that the sequence $\Cov\left(Y^{(k)}_{m,n} - \ell^{(k)}_{\bth_0,n};\ell^{(l)}_{\bth_0,n+\Delta}\right)$ converges to zero as $m$ tends to infinity at an exponential rate uniformly in $\Delta$. The existence of a summable sequence dominating $\left|\Cov\left(Y^{(k)}_{m,n};Y^{(l)}_{m,n+\Delta}\right)\right|$ is ensured by the arguments given in the proof of \cref{lemma-convIbth}, reasoning as in the derivation of \cref{eq-boundcklbthr1,eq-boundcklbthr2}.
\paragraph*{\bf Step 2}
We shall show that there exist positive constants $C$ and $\rho<1$, independent of $L$, such that
\begin{equation}
\label{eq-varsumZ}
\tr\Var\left(\frac{1}{\sqrt{L}}\sum_{n=1}^L\mathcal{Z}_{m,n}\right)\leq C \rho^m,\quad \text{$\mathcal{Z}_{m,n}$ given in \cref{eq-DefYmnZmn}}.
\end{equation}
Since
\begin{equation}
\label{eq-varsumZboundUV}
\tr\Var\left(\frac{1}{\sqrt{L}}\sum_{n=1}^L\mathcal{Z}_{m,n}\right)\leq 2\left[\tr\Var\left(\frac{1}{\sqrt{L}}\sum_{n=1}^L\mathcal{U}_{m,n}\right)+\tr\Var\left(\frac{1}{\sqrt{L}}\sum_{n=1}^L\mathcal{V}_{m,n}\right)\right],
\end{equation}
it suffices to consider the latter two terms. We first observe that
\begin{align}
\label{eq-VarsumU}
\tr\Var\left(\frac{1}{\sqrt{L}}\sum_{n=1}^L\mathcal{U}_{m,n}\right) = &\frac{1}{L}\tr\sum_{n,n'=1}^L\Cov\left(\mathcal{U}_{m,n};\mathcal{U}_{m,n'}\right) = \frac{1}{L}\sum_{k,l=1}^r\sum_{\Delta=-L+1}^{L-1}\left(L-|\Delta|\right)\mathfrak{u}^{(k,l)}_{m,\Delta}\leq\sum_{k,l=1}^r\sum_{\Delta\in\Z}\left|\mathfrak{u}^{(k,l)}_{m,\Delta}\right|,
\end{align}
where
\begin{align*}
 \mathfrak{u}^{(k,l)}_{m,\Delta} =& \Cov\left(U^{(k)}_{m,n};U^{(l)}_{m,n+\Delta}\right)\\
  =& \sum_{\substack{\nu,\mu=0\\\nu',\mu'=m+1}}^m \Cov\left( -\tr\left[V_{\bth_0}^{-1}c_{\bth_0,\nu}\Y_{n-\nu}\Y_{n-\nu'}^Tc_{\bth,\nu'}^T V_{\bth_0}^{-1}\partial_kV_{\bth_0}\right] + \Y_{n-\nu}^Tc^{(k),T}_{\bth_0,\nu}V_{\bth_0}^{-1}c_{\bth_0,\nu'}\Y_{n-\nu'} \right.;\\*
  &\qquad\left. -\tr\left[V_{\bth_0}^{-1}c_{\bth_0,\mu}\Y_{n+\Delta-\mu}\Y_{n+\Delta-\mu'}^Tc_{\bth,\mu'}^T V_{\bth_0}^{-1}\partial_lV_{\bth_0}\right] + \Y_{n+\Delta-\mu}^Tc^{(l),T}_{\bth_0,\mu}V_{\bth_0}^{-1}c_{\bth_0,\mu'}\Y_{n+\Delta-\mu'} \right).
\end{align*}
As before, under \cref{assum-4moments}, the Cauchy--Schwarz inequality and the exponential bounds for $\left\|c_{\bth_0,\nu}\right\|$ and $\left\|c^{(k)}_{\bth_0,\nu}\right\|$ imply that $\left|\mathfrak{u}^{(k,l)}_{m,\Delta}\right|< C\rho^m$. By arguments similar to the ones used in the proof of \cref{lemma-davydov} Davydov's inequality implies that, for $m<\lfloor \Delta/2\rfloor$,
\begin{align*}
\left|\mathfrak{u}^{(k,l)}_{m,\Delta}\right| \leq & C\sum_{\nu=0}^\infty\sum_{\nu'=m+1}^\infty\sum_{\mu,\mu'=0}^{\lfloor \Delta/2\rfloor}\rho^{\nu+\nu'+\mu+\mu'}\left[\alpha_{\Y}\left(\left\lfloor\frac{\Delta}{2}\right\rfloor\right)\right]^{\delta/(2+\delta)}  +  C\sum_{\nu,\nu'=0}^\infty\sum_{\substack{\mu,\mu'\\\max\{\mu,\mu'\}>\lfloor \Delta/2\rfloor}}\rho^{\nu+\nu'+\mu+\mu'} \\
  \leq& C \rho^ m \left\{ \left[\alpha_{\Y}\left(\left\lfloor\frac{\Delta}{2}\right\rfloor\right)\right]^{\delta/(2+\delta)} + \rho^{\Delta/2} \right\}.
\end{align*}
It thus follows that, independent of the value of $k$ and $l$,
\begin{equation*}
\sum_{\Delta=0}^\infty\left|\mathfrak{u}^{(k,l)}_{m,\Delta}\right|=\sum_{\Delta=0}^{2m}\left|\mathfrak{u}^{(k,l)}_{m,\Delta}\right|+\sum_{\Delta=2m+1}^\infty\left|\mathfrak{u}^{(k,l)}_{m,\Delta}\right|\leq C\rho^m \left\{ m  +  \sum_{\Delta=0}^\infty \left[\alpha_{\Y}\left(\Delta\right)\right]^{\delta/(2+\delta)}\right\},
\end{equation*}
and therefore, by \cref{eq-VarsumU}, that $\tr\Var\left(L^{-1/2}\sum_{n=1}^L\mathcal{U}_{m,n}\right) \leq C \rho^m$. In an analogous way one also can show that $\tr\Var\left(L^{-1/2}\sum_{n=1}^L\mathcal{V}_{m,n}\right) \leq C \rho^m$, and thus the claim \labelcref{eq-varsumZ} follows with \cref{eq-varsumZboundUV}.
\paragraph*{\bf Step 3}
In step 1 it has been shown that $L^{-1/2}\sum_n \left[\mathcal{Y}_{m,n}-\E\mathcal{Y}_{m,n}\right]\xrightarrow[L\to\infty]{d}\mathscr{N}(\bzero_r,I_m)$, and that $I_m$ converges to $I(\bth_0)$, as $m\to\infty$. In particular, the limiting normal random variables with covariances $I_m$ converge weakly to a normal random variable with covariance matrix $I(\bth_0)$. Step 2 together with the multivariate Chebyshev inequality implies that, for every $\epsilon>0$,
\begin{align*}
&\lim_{m\to\infty}\limsup_{L\to\infty}\Pb\left(\left\|\frac{1}{\sqrt L}\nabla_{\bth}\mathscr{L}\left(\bth_0,\y^L\right)-\frac{1}{\sqrt L}\sum_{n=1}^L\left[\mathcal{Y}_{m,n}-\E\mathcal{Y}_{m,n}\right]\right\|>\epsilon\right)\\
  \leq & \lim_{m\to\infty}\limsup_{L\to\infty}\frac{r}{\epsilon^2}\tr\Var\left(\frac{1}{\sqrt{L}}\sum_{n=1}^L\mathcal{Z}_{m,n}\right)\leq\lim_{m\to\infty} \frac{C r}{\epsilon^2}\rho^m =0.
\end{align*}
Proposition 6.3.9 of \citet{brockwell1991tst} thus completes the proof.
\end{proof}

A very important step in the proof of asymptotic normality of QML estimators is to establish that the Fisher information matrix $J$, evaluated at the true parameter value, is non\hyp{}singular. We shall now show that \cref{assum-identifiabilityFisher} is sufficient to ensure that $J^{-1}$ exists for linear state space models. For vector ARMA processes, \formulaplural\ similar to \cref{eq-DefJ1J2} below have been derived in the literature \citep[see, e.\,g.,][]{klein2008asymptotic,klein2000direct}; in fact, the resultant property of the Fisher information matrix of a vector ARMA process implies that $J$ in this case is non\hyp{}singular if and only if its autoregressive and moving average polynomials have no common eigenvalues \citep{klein2005resultant}. In conjunction with the equivalence of linear state space and vector ARMA models this provides an alternative way of checking that $J$ in non\hyp{}singular. We continue to work with \cref{assum-identifiabilityFisher}, however, because it avoids the transformation of the state space model \labelcref{eq-innohatSSM} into an equivalent ARMA form.


\begin{lemma}
\label{lemma-convJ}
Assume that \cref{assum-compact,assum-smoothparam,assum-2moments,assum-smoothparam3,assum-stability,assum-identifiabilityFisher} hold. With probability one, the matrix $J=\lim_{L\to\infty}L^{-1}\nabla_{\bth}^2\widehat{\mathscr{L}}(\bth_0,\y^L)$ exists and is non\hyp{}singular.
\end{lemma}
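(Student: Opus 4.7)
The plan has three stages: (i) obtain $J$ as a Ces\`aro limit via Birkhoff's Ergodic Theorem; (ii) derive an explicit expression exhibiting $J$ as a sum of two positive semi-definite matrices; (iii) use \cref{assum-identifiabilityFisher} to rule out a non-trivial null vector.

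\emph{Existence.} By \cref{lemma-likelihoodequivpartial}, \labelcref{lemma-likelihoodequivpartial2} it suffices to study $L^{-1}\nabla_{\bth}^2\mathscr{L}(\bth_0,\y^L)=L^{-1}\sum_{n=1}^L\nabla_{\bth}^2 l_{\bth_0,n}$. The summands are measurable functionals of $\{\Y_m,m\leq n\}$ by virtue of the linear representations in \cref{lemma-propinnovations}, \labelcref{lemma-propinnovations-expbounded} and \cref{lemma-propinnovationspartial}, \labelcref{lemma-propinnovationspartial2expbounded}, hence stationary and ergodic thanks to the ergodicity of $\Y$ (which follows from \cref{assum-2moments} together with \cref{eq-MArepY}). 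They have finite first moment by \cref{assum-4moments} and the Cauchy--Schwarz inequality combined with the uniform exponential decay of the MA coefficients, so Birkhoff's Ergodic Theorem yields the almost sure limit $J=\E[\nabla_{\bth}^2 l_{\bth_0,1}]$.

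\emph{Explicit form.} Starting from
\begin{equation*}
\partial_k l_{\bth,n}=\tr\!\left[V_{\bth}^{-1}\left(\I_d-\beps_{\bth,n}\beps_{\bth,n}^T V_{\bth}^{-1}\right)\partial_k V_{\bth}\right]+2(\partial_k\beps_{\bth,n})^T V_{\bth}^{-1}\beps_{\bth,n},
\end{equation*}
I differentiate once more with respect to $\vartheta^l$ and take expectation at $\bth_0$. The crucial cancellations are $\E[\beps_{\bth_0,n}\beps_{\bth_0,n}^T]=V_{\bth_0}$ and $\E[(\partial_k\beps_{\bth_0,n})\beps_{\bth_0,n}^T]=0$; the latter uses that $\partial_k\beps_{\bth_0,n}$ lies in the closed linear span of the past of $\Y$ (\cref{lemma-propinnovationspartial}, \labelcref{lemma-propinnovationspartialexpbounded}), which is orthogonal to the true innovations by definition. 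A short trace manipulation then yields
\begin{equation}
\label{eq-DefJ1J2}
J_{k,l}=\tr\!\left[V_{\bth_0}^{-1}(\partial_k V_{\bth_0})V_{\bth_0}^{-1}(\partial_l V_{\bth_0})\right]+2\,\E\!\left[(\partial_k\beps_{\bth_0,n})^T V_{\bth_0}^{-1}(\partial_l\beps_{\bth_0,n})\right],
\end{equation}
which manifestly displays $J$ as a sum of two positive semi-definite matrices.

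\emph{Non-singularity.} Suppose $a^T J a=0$ for some $a\in\R^r$; both summands of \cref{eq-DefJ1J2} are separately non-negative, so each must vanish. The first forces the symmetric matrix $M\coloneqq\sum_k a_k\partial_k V_{\bth_0}$ to be zero (indeed $V_{\bth_0}^{-1/2}MV_{\bth_0}^{-1/2}$ is symmetric with vanishing Frobenius norm), while the second forces $\sum_k a_k\partial_k\beps_{\bth_0,n}=\bzero_d$ almost surely. Inserting the MA expansion $\partial_k\beps_{\bth,n}=\sum_{\nu\geq 1}c^{(k)}_{\bth,\nu}\Y_{n-\nu}$ from \cref{lemma-propinnovationspartial}, \labelcref{lemma-propinnovationspartialexpbounded} and then replacing $\Y_{n-\nu}$ by its innovations representation \labelcref{eq-MAssmY} produces a vanishing linear combination of the uncorrelated innovations $\{\beps_{\bth_0,m}\}_{m<n}$; non-singularity of $V_{\bth_0}$ (\cref{assum-stability}, \labelcref{assum-stabilityV}) together with a triangular induction in $\nu$ then yields $\sum_k a_k\,\partial_k[H_{\bth}(F_{\bth}-K_{\bth}H_{\bth})^{\nu-1}K_{\bth}]|_{\bth_0}=\bzero$ for every $\nu\geq 1$. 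The main technical obstacle is now to convert these constraints on the pseudo-AR coefficients into the corresponding constraints on the MA coefficients $H_{\bth}F_{\bth}^{\mu-1}K_{\bth}$ populating $\psi_{\bth,j_0}$. I would do so via the formal power series identity
\begin{equation*}
\Bigl(\I_d-\sum_{\nu\geq 1}H_{\bth}(F_{\bth}-K_{\bth}H_{\bth})^{\nu-1}K_{\bth}z^\nu\Bigr)\Bigl(\I_d+\sum_{\mu\geq 1}H_{\bth}F_{\bth}^{\mu-1}K_{\bth}z^\mu\Bigr)=\I_d,
\end{equation*}
obtained by composing the innovations expansion \labelcref{eq-MAssmY} with the pseudo-innovations expansion \labelcref{eq-innoSSMpolynomial} at a common parameter; matching coefficients of $z^\mu$ expresses each MA coefficient as a universal polynomial in the pseudo-AR coefficients of order $\leq\mu$, so a further induction transports the vanishing of $\sum_k a_k\partial_k$ from the pseudo-AR side to the MA side. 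Combined with $\sum_k a_k\partial_k V_{\bth_0}=\bzero$, this is precisely $a^T\nabla_{\bth}\psi_{\bth_0,j_0}=\bzero$, whence \cref{assum-identifiabilityFisher} forces $a=\bzero$.
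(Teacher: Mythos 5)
Your argument is correct and its skeleton coincides with the paper's: the same decomposition of $J$ into the two positive semidefinite pieces $J_{2,kl}=\tr\bigl[V_{\bth_0}^{-1}(\partial_k V_{\bth_0})V_{\bth_0}^{-1}(\partial_l V_{\bth_0})\bigr]$ and $J_{1,kl}=2\E\bigl[(\partial_k\beps_{\bth_0,1})^TV_{\bth_0}^{-1}(\partial_l\beps_{\bth_0,1})\bigr]$, and the same reduction of non-singularity to \cref{assum-identifiabilityFisher}. The one place where you genuinely deviate is the passage from $\sum_k a_k\partial_k\beps_{\bth_0,n}=\bzero_d$ a.s.\ to the vanishing of $\sum_k a_k\partial_k\bigl(H_{\bth}F_{\bth}^{\nu-1}K_{\bth}\bigr)$ at $\bth_0$: the paper differentiates the innovations-form recursion $\beps_{\bth,n}=\Y_n+\sum_{\nu\geq1}\mathscr{M}_{\bth,\nu}\beps_{\bth,n-\nu}$, which produces the Markov parameters $\mathscr{M}_{\bth,\nu}=-H_{\bth}F_{\bth}^{\nu-1}K_{\bth}$ in one step, whereas you first extract the constraints on the pseudo-AR coefficients $H_{\bth}(F_{\bth}-K_{\bth}H_{\bth})^{\nu-1}K_{\bth}$ and then transport them to the Markov parameters via the power-series inverse identity and an induction on the coefficient index. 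Both routes are sound; yours costs an extra (correct) induction but spells out a step the paper compresses into a single sentence. One blemish to fix: in the existence part you invoke \cref{assum-4moments} to get integrability of $\nabla_{\bth}^2 l_{\bth_0,n}$, but that assumption is not among the hypotheses of the lemma. It is also not needed: every term of $\nabla_{\bth}^2 l_{\bth_0,n}$ is a quadratic form in $\beps_{\bth_0,n}$, $\partial_k\beps_{\bth_0,n}$ and $\partial^2_{k,l}\beps_{\bth_0,n}$, all of which are linear in $(\Y_{n-\nu})_{\nu\geq0}$ with exponentially decaying coefficients, so the finite second moments built into \cref{def-ssmgen} already give $\E\bigl|\partial^2_{k,l}l_{\bth_0,n}\bigr|<\infty$ via Cauchy--Schwarz.
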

\begin{proof}
It can be shown as in the proof of \citet[Lemma 4]{mainassara2011estimating} that $J$ exists and is equal to $J=J_1+J_2$, where
\begin{equation}
\label[pluralequation]{eq-DefJ1J2}
J_1 = 2\E \left[\left(\nabla_{\bth}\beps_{\bth_0,1}\right)^TV_{\bth_0}^{-1}\left(\nabla_{\bth}\beps_{\bth_0,1}\right)\right]\quad\text{and}\quad J_2 = \left(\tr\left[V_{\bth_0}^{-1/2} \left(\partial_i V_{\bth_0}\right)V_{\bth_0}^{-1}\left(\partial_j V_{\bth_0}\right)V_{\bth_0}^{-1/2}\right]\right)_{ij}.
\end{equation}
$J_2$ is positive semidefinite because it can be written as $J_2 = \left(\begin{array}{ccc}\bb_1 & \hdots & \bb_r\end{array}\right)^T\left(\begin{array}{ccc}\bb_1 & \hdots & \bb_r\end{array}\right)$, where $\bb_m = \left(V_{\bth_0}^{-1/2}\otimes V_{\bth_0}^{-1/2}\right)\vec\left(\partial_m V_{\bth_0}\right)$. Since $J_1$ is positive semidefinite as well, proving that $J$ is non\hyp{}singular is equivalent to proving that for any non\hyp{}zero vector $\bc\in\R^r$, the numbers $\bc^T J_i\bc$, $i=1,2$, are not both zero. Assume, for the sake of contradiction, that there exists such a vector $\bc=(c_1,\ldots,c_r)^T$. The condition $\bc^T J_1\bc$ implies that, almost surely, $\sum_{k=1}^r c_k\partial_k\beps_{\bth_0,n}=\bzero_d$, for all $n\in\Z$. It thus follows that $\sum_{\nu=1}^\infty{\sum_{k=1}^r c_k\left(\partial_k\mathscr{M}_{\bth_0,\nu}\right)}\beps_{\bth_0,-\nu}=\bzero_d$, where the Markov parameters $\mathscr{M}_{\bth,\nu}$ are given by $\mathscr{M}_{\bth,\nu} = -H_{\bth} F_{\bth}^{\nu-1}K_{\bth}$, $\nu\geq 1$. Since the sequence $\beps_{\bth_0}$ is uncorrelated with positive definite covariance matrix, it follows that $\sum_{k=1}^rc_k\left(\partial_k\mathscr{M}_{\bth_0,\nu}\right)=\bzero_d$, for every $\nu\in\N$. Using the relation $\vec(ABC) = \left(C^T\otimes A\right)\vec B$ \citep[Proposition 7.1.9]{bernstein2005matrix}, we see that the last display is equivalent to $\nabla_{\bth}\left(\left[K_{\bth_0}^T\otimes H_{\bth_0}\right]\vec F_{\bth_0}^{\nu-1}\right)\bc=\bzero_{d^2}$ for every $\nu\in\N$. The condition $\bc^T J_2\bc=0$ implies that $\left(\nabla_{\bth} \vec V_{\bth_0}\right)\bc = \bzero_{d^2}$. By the definition of $\psi_{\bth,j}$ in \cref{eq-DefPsij} it thus follows that $\nabla_{\bth}\psi_{\bth_0,j}\bc=\bzero_{(j+2)d^2}$, for every $j\in \N$, which, by \cref{assum-identifiabilityFisher}, is equivalent to the contradiction that $\bc=\bzero_r$.
\end{proof}

\begin{proof}[Proof of \cref{theorem-hatbthLCLT}]
Since the estimate $\hat\bth^L$ converges almost surely to $\bth_0$ by the consistency result proved in \cref{theorem-consistency}, and $\bth_0$ is an element of the interior of $\Theta$ by \cref{assum-interior}, the estimate $\hat\bth^L$ is an element of the interior of $\Theta$ eventually almost surely. The assumed smoothness of the parametrization (\cref{assum-smoothparam3}) implies that the extremal property of $\hat\bth^L$ can be expressed as the first order condition $\nabla_{\bth}\widehat{\mathscr{L}}(\hat\bth^L,\y^L)=\bzero_r$. A Taylor expansion of $\bth\mapsto\nabla_{\bth}\widehat{\mathscr{L}}(\bth,\y^L)$ around the point $\bth_0$ shows that there exist parameter vectors $\bth_i\in\Theta$ of the form $\bth_i=\bth_0+c_i(\hat\bth^L-\bth_0)$, $0\leq c_i\leq 1$, such that
\begin{equation}
\label{eq-TaylorLbth}
\bzero_r = L^{-1/2}\nabla_{\bth}\widehat{\mathscr{L}}(\bth_0,\y^L) + \frac{1}{L}\nabla^2_{\bth}\widehat{\mathscr{L}}(\underline\bth^L,\y^L)L^{1/2}\left(\hat\bth^L-\bth_0\right),
\end{equation}
where $\nabla^2_{\bth}\widehat{\mathscr{L}}(\underline\bth^L,\y^L)$ denotes the matrix whose $i$th row, $i=1,\ldots,r$, is equal to the $i$th row of $\nabla^2_{\bth}\widehat{\mathscr{L}}(\bth_i,\y^L)$. By \cref{lemma-asymnormaldeltaLmixingY} the first term on the right hand side converges weakly to a multivariate normal random variable with mean zero and covariance matrix $I=I(\bth_0)$. As in \cref{lemma-uniformconvergence} one can show that the sequence $\bth\mapsto L^{-1}\nabla^3_{\bth}\widehat{\mathscr{L}}(\bth,\y^L)$, $L\in\N$, of random functions converges almost surely uniformly to the continuous function $\bth\mapsto\nabla_{\bth}^3\mathscr{Q}(\bth)$ taking values in the space $\R^{r\times r\times r}$. Since on the compact space $\Theta$ this function is bounded in the operator norm obtained from identifying $\R^{r\times r\times r}$ with the space of linear functions from $\R^r$ to $M_r(\R)$, that sequence is almost surely uniformly bounded, and we obtain that
\begin{align*}
\left\|\frac{1}{L}\nabla^2_{\bth}\widehat{\mathscr{L}}(\underline\bth^L,\y^L) - \frac{1}{L}\nabla^2_{\bth}\widehat{\mathscr{L}}(\bth_0,\y^L)\right\|\leq \sup_{\bth\in\Theta}\left\|\frac{1}{L}\nabla^3_{\bth}\widehat{\mathscr{L}}(\bth,\y^L)\right\|\left\|\underline\bth^L-\bth_0\right\|\xrightarrow[L\to\infty]{\text{a.\,s.}}0,
\end{align*}
because, by \cref{theorem-consistency}, the second factor almost surely converges to zero as $L$ tends to infinity. It follows from \cref{lemma-convJ} that $L^{-1}\nabla^2_{\bth}\widehat{\mathscr{L}}(\underline\bth^L,\y^L)$ converges to the matrix $J$ almost surely, and thus from \cref{eq-TaylorLbth} that $L^{1/2}\left(\hat\bth^L-\bth_0\right)\convd \mathscr{N}\left(\bzero_r,J^{-1}IJ^{-1}\right)$, as $L\to\infty$. This shows \cref{eq-hatbthLCLT} and completes the proof.
\end{proof}

\section[QML estimation for multivariate CARMA processes]{Quasi maximum likelihood estimation for multivariate continuous\hyp{}time ARMA processes}
\label{section-QMLMCARMA}

In this section we pursue the second main topic of the present paper, a detailed investigation of the asymptotic properties of the QML estimator of discretely observed multivariate continuous\hyp{}time autoregressive moving average processes. We will make use of the equivalence between MCARMA and continuous\hyp{}time linear state space models, as well as of the important observation that the state space structure of a continuous\hyp{}time process is preserved under equidistant sampling, which allows for the results of the previous section to be applied. The conditions we need to impose on the parametrization of the models under consideration are therefore closely related to the assumptions made in the discrete\hyp{}time case, except that the mixing and ergodicity assumptions \labelcref{assum-2moments,assum-mixing} are automatically satisfied \citep[][Proposition 3.34]{marquardt2007multivariate}.

We start the section with a short recapitulation of the definition and basic properties of L\'evy\hyp{}driven continuous\hyp{}time ARMA processes and their equivalence to state space models (based mainly on \cite{marquardt2007multivariate,schlemmmixing2010}). Thereafter we work towards being able to apply our results on QML estimation for discrete time state models to QML estimators for MCARMA processes culminating in our main result \cref{theorem-CLTmcarma}. To this end we first recall the second order structure of continuous time state space models and provide auxiliary results on the transfer function  in \cref{sec:secord}. This is followed in \cref{section-sampling} by recalling that equidistant observations of an MCARMA processes follow a state space model in discrete time, as well as discussions of the minimality of a state space model and of how to make the relation between the continuous and discrete time  state space models unique.  The following \cref{section-identifiability} looks at the second\hyp{}order properties of a discretely observed MCARMA process and the aliasing effect. Together the results of \cref{sec:secord,section-sampling,section-identifiability} allow to give accessible identifiability conditions needed to apply the QML estimation theory developed in \cref{section-QMLDTSSM}. Finally, \Cref{section-asymptoticmcarma} introduces further technical assumptions needed to employ the theory for strongly mixing state space models and then derives our main result about the consistency and asymptotic normality of the QML estimator for equidistantly sampled MCARMA processes in \cref{theorem-CLTmcarma}.

\subsection{L\'evy\hyp{}driven multivariate CARMA processes and continuous\hyp{}time state space models}
\label{section-MCARMA-QML}

A natural source of randomness in the specification of continuous\hyp{}time stochastic processes are L\'evy processes. For a thorough discussion of these processes we refer the reader to the monographs \citet{applebaum2004lpa,sato1991lpa}.
\begin{definition}
\label{def-levyprocess}
A two\hyp{}sided $\R^m$\hyp{}valued {\em L\'evy process} $\left(\Lb(t)\right)_{t\in\mathbb{R}}$ is a stochastic process, defined on a probability space $(\Omega,\mathscr{F},\Pb)$, with stationary, independent increments, continuous in probability, and satisfying $\Lb(0)=\bzero_m$ almost surely.
\end{definition}

The characteristic function of a L\'evy process $\Lb$ has the L\'evy-Khintchine-form $\E \ee^{\ii\langle\bu,\Lb(t)\rangle}=\exp\{t\psi^{\Lb}(\bu)\}$, $\bu\in\R^m$, $t\in\R^+$, where the characteristic exponent $\psi^{\Lb}$ is given by
\begin{equation}
\label{eq-levykhintchine2}
\psi^{\Lb}(\bu)=\ii\langle \bgammaL,\bu\rangle-\frac{1}{2}\langle \bu,\SigGauss\bu\rangle+\int_{\R^m}{\left[\ee^{\ii\langle\bu,\bx\rangle}-1-\ii\langle\bu,\bx\rangle I_{\{\left\|x\right\|\leq 1\}}\right]\nuL (\dd\bx)}.
\end{equation}
The vector $\bgammaL\in\R^m$ is called the {\it drift}, $\SigGauss$ is a non\hyp{}negative definite, symmetric $m\times m$ matrix called the {\it Gaussian covariance matrix}, and the {\it L\'evy measure} $\nuL $ satisfies the two conditions $\nuL (\{\bzero_m\})=0$ and $\int_{\R^m}\min(\left\|\bx\right\|^2,1)\nuL (\dd\bx)<\infty$. For the present purpose it is enough to know that a L\'evy process $\Lb$ has finite $k$th absolute moments, $k>0$, that is $\E\left\|\Lb(t)\right\|^k<\infty$, if and only if $\int_{\left\|\bx\right\|\geq 1}\left\|\bx\right\|^k\nuL(\dd\bx)<\infty$ \citep[Corollary 25.8]{sato1991lpa}, and that the covariance matrix $\SigL$ of $\Lb(1)$, if it exists, is given by $\SigGauss+\int_{\left\|\bx\right\|\geq 1}{\bx\bx^T\nuL(\dd\bx)}$ \citet[Example 25.11]{sato1991lpa}.
\begin{assumptionlevy}
\label{assum-levy2}
The L\'evy process $\Lb$ has mean zero and finite second moments, i.\,e.\ $\bgammaL+\int_{\left\|\bx\right\|\geq 1}{\bx\nuL(\dd\bx)}$ is zero, and the integral $\int_{\left\|\bx\right\|\geq 1}{\left\|\bx\right\|^2\nuL(\dd\bx)}$ is finite.
\end{assumptionlevy}

Just like i.\,i.\,d.\ sequences are used in time series analysis to define ARMA processes, L\'evy processes can be used to construct (multivariate) continuous\hyp{}time autoregressive moving average processes, called (M)CARMA processes. If $\Lb$ is a two\hyp{}sided  L\'evy process with values in $\R^m$ and $p>q$ are integers, the $d$\hyp{}dimensional $\Lb$\hyp{}driven $\text{MCARMA}(p,q)$ process with autoregressive polynomial
\begin{subequations}
\label{ARMApoly-QML}
\begin{equation}
\label{ARpoly-QML}
z\mapsto P(z)\coloneqq \I_dz^p+A_1z^{p-1}+\ldots+A_p\in M_d(\R[z])                                                                                                                                                                                                                                                                                                                                                                                                                                                                                   \end{equation}
and moving average polynomial
\begin{equation}
\label{poly-QML}
z\mapsto Q(z)\coloneqq B_0z^q+B_1z^{q-1}+\ldots+B_q\in M_{d,m}(\R[z]) 
\end{equation}
\end{subequations}
is defined as the solution to the formal differential equation $P(\DD)\Y(t)=Q(\DD)\DD\Lb(t)$, $\DD\equiv(\dd/\dd t)$. It is often useful to allow for the dimensions of the driving L\'evy process $\Lb$ and the $\Lb$\hyp{}driven MCARMA process to be different, which is a slight extension of the original definition of \citet{marquardt2007multivariate}. The results obtained in that paper remain true if our definition is used. In general, the paths of a L\'evy process are not differentiable, so we interpret the defining differential equation as being equivalent to the {\it state space representation}
\begin{equation}
\label{eq-MCARMAssm-QML}
\dd \boldsymbol{G}(t)=\mathcal{A}\boldsymbol{G}(t)\dd t+\mathcal{B} \dd \Lb(t),\quad \Y(t)= \mathcal{C}\boldsymbol{G}(t),\quad t\in\R,
\end{equation}
where $\mathcal{A}$ ,$\mathcal{B}$, and $\mathcal{C}$ are given by
\begin{subequations}
\label{eq-MCARMAcoeffABC-QML}
\begin{align}
\label{eq-MCARMAcoeffA-QML} \mathcal{A} =& \left(\begin{array}{ccccc}
       0 & \I_d & 0 & \ldots & 0 \\
	0 & 0 & \I_d & \ddots & \vdots \\
	\vdots && \ddots & \ddots & 0\\
	0 & \ldots & \ldots & 0 & \I_d\\
	-A_p & -A_{p-1} & \ldots & \ldots & -A_1
      \end{array}\right)\in M_{pd}(\R),\\
\label{MCARMAcoeff-BQML}\mathcal{B}=&\left(\begin{array}{ccc}\beta_1^T & \cdots & \beta_p^T\end{array}\right)^T\in M_{pd,m}(\R),\quad\beta_{p-j} = -I_{\{0,\ldots,q\}}(j)\left[\sum_{i=1}^{p-j-1}{A_i\beta_{p-j-i}-B_{q-j}}\right],\\
\label{MCARMAcoeffCQML}\mathcal{C}=&\left(\I_d,0,\ldots,0\right)\in M_{d,pd}(\R).
\end{align}
\end{subequations}
It follows from representation \labelcref{eq-MCARMAssm-QML} that MCARMA processes are special cases of linear multivariate continuous\hyp{}time state space models, and in fact, the class of linear state space models is equivalent to the class of MCARMA models \citep[Corollary 3.4]{schlemmmixing2010}. By considering the class of linear state space models, one can define representations of MCARMA processes which are different from \cref{eq-MCARMAssm-QML} and better suited for the purpose of estimation. 
\begin{definition}
A continuous\hyp{}time linear state space model $(A,B,C,\Lb)$ of dimension $N$ with values in $\R^d$ is characterized by an $\R^m$\hyp{}valued driving L\'evy process $\Lb$, a state transition matrix $A\in M_N(\R)$, an input matrix $B\in M_{N,m}(\R)$, and an observation matrix $C\in M_{d,N}(\R)$. It consists of a state equation of Ornstein--Uhlenbeck type
\begin{subequations}
\label{eq-ssm-QML}
\begin{equation}
\label{eq-stateeq-QML}
 \dd \X(t) =A\X(t) \dd t+B \dd \Lb(t),\quad t\in\R,
 \end{equation}
and an observation equation
\begin{equation}
\label{obseq}
\Y(t) = C\X(t),\quad t\in\R.
\end{equation}
\end{subequations}
The $\R^N$\hyp{}valued process $\X=\left(\X(t)\right)_{t\in\R}$ is the {\it state vector process}, and $\Y=\left(\Y(t)\right)_{t\in\R}$ the {\it output process}.
\end{definition}
A solution $\Y$ to \cref{eq-ssm-QML} is called {\it causal} if, for all $t$, $\Y(t)$ is independent of the $\sigma$\hyp{}algebra generated by $\{\Lb(s):s>t\}$. Every solution to \cref{eq-stateeq-QML} satisfies
\begin{equation}
\label{eq-markovstate-QML}
\X(t) = \ee^{A(t-s)}\X(s)+\int_s^t{\ee^{A(t-u)}B\dd \Lb(u)},\quad\forall s,t\in\R,\quad s<t.
\end{equation}
The following can be seen as the multivariate extension of \citet[Proposition 1]{brockwell2010estimation} and recalls conditions for the existence of a stationary causal solution of the state equation \labelcref{eq-stateeq-QML} for easy reference. We always work under the following assumption.
\begin{assumptioneigen}
\label{assumEigen1-QML}
The eigenvalues of the matrix $A$ have strictly negative real parts.
\end{assumptioneigen}
\subsection{Second order structure and the transfer function}\label{sec:secord}
\begin{proposition}[{{\citet[Theorem 5.1]{sato1983stationary}}}]
\label{prop-2ndorderOU}
If Assumptions \labelcref{assumEigen1-QML} and \labelcref{assum-levy2} hold, then \cref{eq-stateeq-QML} has a unique strictly stationary, causal solution $\X$ given by $\X(t)=\int_{-\infty}^t{\ee^{A(t-u)}B\dd \Lb(u)}$. Moreover, $\X(t)$ has mean zero and second\hyp{}order structure
\begin{subequations}
\begin{align}
\label{eq-Varssm-QML}\Var(\X(t))\eqqcolon& \Gamma_0=\int_0^\infty{\ee^{Au}B\SigL B^T\ee^{A^Tu}}\dd u,\\
\label{eq-Covssm-QML}\Cov{(\X(t+h),\X(t)})\eqqcolon& \gamma_{\Y}(h) =\ee^{Ah}\Gamma_0,\quad h\geq 0,
\end{align}
\end{subequations}
where the variance $\Gamma_0$ satisfies $A\Gamma_0+\Gamma_0A^T = -B\SigL B^T$.
\end{proposition}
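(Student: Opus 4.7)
The plan is to follow the standard route for Ornstein--Uhlenbeck type SDEs driven by L\'evy noise, adapting it to our multivariate setting. The key ingredients are the exponential decay of $\ee^{At}$ guaranteed by \cref{assumEigen1-QML} and the second moment condition \cref{assum-levy2}.

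First, I would verify that the process $\X(t) = \int_{-\infty}^t \ee^{A(t-u)}B\,\dd \Lb(u)$ is well-defined in $L^2$. Since the eigenvalues of $A$ have strictly negative real parts, there exist constants $C, \lambda > 0$ such that $\|\ee^{Au}\| \leq C\ee^{-\lambda u}$ for all $u \geq 0$. Together with \cref{assum-levy2}, this makes the deterministic integrand square-integrable with respect to the L\'evy measure of $\Lb$, so the L\'evy--It\^o integral exists. Stationarity then follows from the stationary-increments property of $\Lb$ via the substitution $v = t-u$, and causality is immediate since the integral only involves $\Lb$ up to time $t$. To confirm that $\X$ solves \cref{eq-stateeq-QML}, I would differentiate $\ee^{-At}\X(t)$ formally (or apply the multivariate It\^o formula to jump-diffusions), obtaining $\dd(\ee^{-At}\X(t)) = \ee^{-At}B\,\dd\Lb(t)$, which is equivalent to the state equation.

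Next, I would establish uniqueness among strictly stationary causal solutions. Any such solution $\tilde\X$ satisfies \cref{eq-markovstate-QML}, that is $\tilde\X(t) = \ee^{A(t-s)}\tilde\X(s) + \int_s^t \ee^{A(t-u)}B\,\dd\Lb(u)$ for all $s < t$. Because $\tilde\X$ is stationary, $\E\|\tilde\X(s)\|^2$ is constant in $s$, while $\|\ee^{A(t-s)}\| \to 0$ as $s \to -\infty$; hence the first term vanishes in $L^2$ and the second converges in $L^2$ to our candidate $\X(t)$, forcing $\tilde\X(t) = \X(t)$ almost surely.

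For the moment structure, the mean is zero because $\E\Lb(1) = \bzero$ by \cref{assum-levy2} and the stochastic integral of a deterministic function against a zero-mean L\'evy process has zero mean. The covariance identity for L\'evy integrals gives
\begin{equation*}
\E[\X(t)\X(t)^T] = \int_{-\infty}^t \ee^{A(t-u)}B\SigL B^T \ee^{A^T(t-u)}\,\dd u = \int_0^\infty \ee^{Av}B\SigL B^T\ee^{A^Tv}\,\dd v \eqqcolon \Gamma_0
\end{equation*}
after the change of variables $v = t-u$. For the lagged covariance with $h \geq 0$, I decompose $\X(t+h) = \ee^{Ah}\X(t) + \int_t^{t+h}\ee^{A(t+h-u)}B\,\dd\Lb(u)$ using \cref{eq-markovstate-QML}; the second summand is built from increments of $\Lb$ after time $t$, hence independent of $\X(t)$ by causality, so $\Cov(\X(t+h),\X(t)) = \ee^{Ah}\Gamma_0$. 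Finally, the Lyapunov equation follows by noting that $\frac{\dd}{\dd u}\bigl(\ee^{Au}B\SigL B^T\ee^{A^Tu}\bigr) = A\ee^{Au}B\SigL B^T\ee^{A^Tu} + \ee^{Au}B\SigL B^T\ee^{A^Tu}A^T$ and integrating from $0$ to $\infty$; the left-hand side evaluates to $-B\SigL B^T$ because the exponentials decay, yielding $A\Gamma_0 + \Gamma_0 A^T = -B\SigL B^T$.

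The main technical obstacle is the rigorous treatment of the improper stochastic integral over $(-\infty,t]$: one must control the $L^2$-convergence of truncated integrals $\int_{-n}^t \ee^{A(t-u)}B\,\dd\Lb(u)$ as $n\to\infty$ and justify interchanging expectations with the limit, for which the exponential bound on $\|\ee^{Au}\|$ combined with the finite second-moment assumption on $\Lb$ is essential. Everything else reduces to linear-algebra manipulations and standard properties of stochastic integrals.
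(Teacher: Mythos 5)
The paper does not prove \cref{prop-2ndorderOU} at all: it is quoted directly from \citet[Theorem 5.1]{sato1983stationary} (and presented as the multivariate analogue of a result of Brockwell, Davis and Yang), so there is no in-paper argument to compare yours against. Your reconstruction is the standard one and is essentially correct: $L^2$-existence of the improper integral from the exponential bound on $\ee^{Au}$ plus \cref{assum-levy2}, stationarity from stationary increments, the It\^o isometry for the covariance, the decomposition $\X(t+h)=\ee^{Ah}\X(t)+\int_t^{t+h}\ee^{A(t+h-u)}B\,\dd\Lb(u)$ with independence of the second summand for the lagged covariance, and integration of $\tfrac{\dd}{\dd u}\bigl(\ee^{Au}B\SigL B^T\ee^{A^Tu}\bigr)$ for the Lyapunov equation are all exactly as in the literature. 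One small soft spot: in the uniqueness step you argue that $\ee^{A(t-s)}\tilde\X(s)\to 0$ in $L^2$ because $\E\|\tilde\X(s)\|^2$ is constant in $s$, but strict stationarity of a competing solution does not by itself give finite second moments. The standard fix is to observe that $\tilde\X(s)$ has a fixed law, so $\ee^{A(t-s)}\tilde\X(s)\to 0$ in probability as $s\to-\infty$, while $\int_s^t\ee^{A(t-u)}B\,\dd\Lb(u)$ converges in $L^2$ to the candidate; identifying the limits then gives $\tilde\X(t)=\X(t)$ a.s. With that adjustment the proof is complete.
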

It is an immediate consequence that the output process $\Y$ has mean zero and autocovariance function $\R\ni h\mapsto\gamma_{\Y}(h)$ given by $\gamma_{\Y}(h)=C\ee^{Ah}\Gamma_0C^T$, $h\geq 0$, and that $\Y$ itself can be written succinctly as a moving average of the driving L\'evy process as $\Y(t)=\int_{-\infty}^\infty{g(t-u)\dd \Lb(u)}$, where $g(t)=C\ee^{At}BI_{[0,\infty)}(t)$. This representation shows that the behaviour of the process $\Y$ depends on the values of the individual matrices $A$, $B$, and $C$ only through the products $C\ee^{At}B$, $t\in\R$. The following lemma relates this analytical statement to an algebraic one about rational matrices, allowing us to draw a connection to the identifiability theory of discrete\hyp{}time state space models.
\begin{lemma}
Two matrix triplets $(A,B,C)$, $(\tilde A,\tilde B,\tilde C)$ of appropriate dimensions satisfy $C\ee^{At}B=\tilde C\ee^{\tilde At}\tilde B$ for all $t\in\R$ if and only if $C(z\I-A)^{-1}B=\tilde C(z\I-\tilde A)^{-1}\tilde B$ for all $z\in\C$.
\end{lemma}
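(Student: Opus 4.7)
The plan is to reduce both conditions to the equality of the Markov parameters, namely $CA^k B = \tilde C \tilde A^k \tilde B$ for every non-negative integer $k$. Once this common intermediate characterization is established, the lemma follows immediately.

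For the direction from the time-domain identity to the frequency-domain identity, I would first differentiate $t \mapsto C\ee^{At}B = \tilde C \ee^{\tilde A t}\tilde B$ repeatedly and evaluate at $t=0$. Since both sides are entire functions of $t$ with derivatives $C A^k B$ and $\tilde C \tilde A^k \tilde B$ respectively, equality for all $t$ forces equality of all Markov parameters. Conversely, equality of the Markov parameters together with the convergent series $\ee^{At} = \sum_{k\geq 0} A^k t^k/k!$ and its tilde-counterpart immediately yields $C\ee^{At}B = \tilde C \ee^{\tilde A t}\tilde B$ for every $t \in \R$.

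To connect the Markov parameters to the transfer function, I would use the Neumann series expansion $(z\I - A)^{-1} = \sum_{k\geq 0} A^k z^{-k-1}$, which is valid for $|z|$ strictly larger than the spectral radius of $A$. This gives
\begin{equation*}
C(z\I - A)^{-1}B = \sum_{k=0}^\infty C A^k B \, z^{-k-1},\qquad |z| > \rho(A),
\end{equation*}
and an analogous Laurent expansion for the tilde system valid for $|z| > \rho(\tilde A)$. If the Markov parameters coincide, these Laurent series agree on the common annulus $\{|z| > \max(\rho(A),\rho(\tilde A))\}$, and since both $z \mapsto C(z\I-A)^{-1}B$ and $z\mapsto \tilde C(z\I - \tilde A)^{-1}\tilde B$ are rational matrix functions, they coincide wherever both are defined (and hence everywhere as rational functions). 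Conversely, if the two rational transfer functions coincide, then expanding each as a Laurent series about infinity yields matching coefficients $CA^k B = \tilde C \tilde A^k \tilde B$ for all $k\geq 0$ by uniqueness of the Laurent expansion.

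The argument is essentially bookkeeping; the only point requiring a brief comment is the interpretation of the frequency-domain identity at points of $\sigma(A) \cup \sigma(\tilde A)$, where one or both sides are not individually defined. The natural reading (and the one used implicitly throughout the preceding discussion of transfer functions) is that the equality holds as rational matrix functions, equivalently at all $z \in \C \setminus \left(\sigma(A) \cup \sigma(\tilde A)\right)$; with this convention the equivalence just established is unambiguous, and no real analytic difficulty remains.
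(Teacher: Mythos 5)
Your proof is correct, but it takes a different route from the paper's. You reduce both identities to the equality of the Markov parameters $CA^kB=\tilde C\tilde A^k\tilde B$, $k\geq 0$, obtaining these on the time side by differentiating at $t=0$ (equivalently, from the Taylor coefficients of the entire function $t\mapsto C\ee^{At}B$) and on the frequency side from the uniqueness of the Laurent expansion $C(z\I-A)^{-1}B=\sum_{k\geq 0}CA^kB\,z^{-k-1}$ at infinity; both directions then follow by resumming the respective series. The paper instead goes directly from the time-domain identity to the frequency-domain one via the holomorphic functional calculus: it writes $\ee^{At}$ as a contour integral of $\ee^{zt}(z\I-A)^{-1}$, deduces that $\int_\gamma\ee^{zt}\Delta(z)\,\dd z=0$ for the difference $\Delta$ of the two transfer functions, and then kills the coefficients of the expansion of $\Delta$ about infinity using the Residue Theorem and the Identity Theorem (only this direction is written out there). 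The two arguments hinge on the same underlying fact --- that the coefficients $CA^kB$ determine both $C\ee^{At}B$ and $C(z\I-A)^{-1}B$ --- but yours is the more elementary and self-contained version, avoiding contour integration entirely and treating both implications symmetrically; the paper's is more compact at the cost of invoking the spectral representation of the matrix exponential. Your closing remark on interpreting the identity at points of $\sigma(A)\cup\sigma(\tilde A)$ as an identity of rational matrix functions is a sensible clarification that the paper leaves implicit.
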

\begin{proof}
If we start at the first equality and replace the matrix exponentials by their spectral representations \citep[see][Theorem 17.5]{lax2002functional}, we obtain $\int_{\gamma}{\ee^{zt}C(z\I-A)^{-1}B\dd z}=\int_{\tilde\gamma}{\ee^{zt}\tilde C(z\I-\tilde A)^{-1}\tilde B\dd z}$, where $\gamma$ is a closed contour in $\C$ winding around each eigenvalue of $A$ exactly once, and likewise for $\tilde\gamma$. Since we can always assume that $\gamma=\tilde\gamma$ by taking $\gamma$ to be $R$ times the unit circle, $R>\max\{|\lambda|:\lambda\in\sigma_A\cup\sigma_{\tilde A}\}$,it follows that, for each $t\in\R$, $\int_{\gamma}{\ee^{zt}\left[C(z\I-A)^{-1}B-\tilde C(z\I-\tilde A)^{-1}\tilde B\right]\dd z}=0$. Since the rational matrix function $ \Delta(z)=C(z\I-A)^{-1}B-\tilde C(z\I-\tilde A)^{-1}\tilde B$ has only poles with modulus less than $R$, it has an expansion around infinity, $\Delta(z) = \sum_{n=0}^\infty{A_n z^{-n}}$, $A_n\in M_d(\C)$, which converges in a region $\{z\in\C:|z|>r\}$ containing $\gamma$. Using the fact that this series converges uniformly on the compact set $\gamma$ and applying the Residue Theorem from complex analysis,
which implies $\int_\gamma{\ee^{zt}z^{-n}\dd z}=t^n/n!$, one sees that $\sum_{n=0}^\infty{\frac{t^n}{n!}A_{n+1}}\equiv 0_N$. Consequently, by the Identity Theorem,
$A_n$ is the zero matrix for all $n>1$, and since $\Delta(z)\to 0$ as $z\to\infty$, it follows that $\Delta(z)\equiv 0_{d,m}$.
\end{proof}
The rational matrix function $H:z\mapsto C(z\I_N-A)^{-1}B$ is called the {\it transfer function} of the state space model \labelcref{eq-ssm-QML} and is closely related to the spectral density $f_{\Y}$ of the output process $\Y$, which is defined as $f_{\Y}(\omega)=\int_{\R}{\ee^{-\ii \omega h}\gamma_{\Y}(h)\dd h}$ -- the Fourier transform of $\gamma_{\Y}$. Before we make this relation explicit, we prove the following lemma.
\begin{lemma}
\label{lemma-inteABSigmaBeA}
For any real number $v$, and matrices $A,B,\SigL ,\Gamma_0$ as in \cref{eq-Varssm-QML}, it holds that
\begin{equation}
\label{matrixequation}
 \int_{-v}^\infty{\ee^{Au}B\SigL B^T\ee^{A^Tu}\dd u} = \ee^{-Av}\Gamma_0\ee^{-A^Tv}.
\end{equation}
\end{lemma}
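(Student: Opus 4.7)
The plan is to reduce the claim to the defining formula \labelcref{eq-Varssm-QML} for $\Gamma_0$ by a simple change of variables.

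First I would substitute $u = s - v$ in the integral on the left-hand side. This changes the lower limit from $u = -v$ to $s = 0$, while the upper limit remains at $+\infty$, and $\dd u = \dd s$. Using the semigroup property of the matrix exponential (which holds because $A$ commutes with itself, and likewise for $A^T$), we have $\ee^{A(s-v)} = \ee^{-Av}\ee^{As}$ and $\ee^{A^T(s-v)} = \ee^{A^T s}\ee^{-A^T v}$. Substituting these and pulling the constant matrices $\ee^{-Av}$ and $\ee^{-A^T v}$ out of the integral (they do not depend on the integration variable $s$) yields
\begin{equation*}
\int_{-v}^\infty \ee^{Au}B\SigL B^T\ee^{A^T u}\dd u = \ee^{-Av}\left(\int_0^\infty \ee^{As} B\SigL B^T \ee^{A^T s}\dd s\right)\ee^{-A^T v}.
\end{equation*}

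By \cref{eq-Varssm-QML} the bracketed integral equals $\Gamma_0$, which delivers the desired identity. There is no real obstacle here; the only points requiring any care are the commutativity of $A$ with itself used in splitting $\ee^{A(s-v)}$, and the fact that the integral defining $\Gamma_0$ is convergent (guaranteed by \cref{assumEigen1-QML}, which ensures that $\ee^{As}$ decays exponentially as $s \to \infty$), so that the manipulations are justified.
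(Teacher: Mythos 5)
Your proof is correct, but it takes a different route from the paper's. You perform the direct change of variables $u=s-v$ and use the semigroup property $\ee^{A(s-v)}=\ee^{-Av}\ee^{As}$ (valid since $As$ and $-Av$ commute) to pull the constant factors $\ee^{-Av}$ and $\ee^{-A^Tv}$ out of the integral, reducing the claim immediately to the definition of $\Gamma_0$ in \cref{eq-Varssm-QML}. The paper instead sets $l(v)=\int_{-v}^\infty\ee^{Au}B\SigL B^T\ee^{A^Tu}\dd u$ and $r(v)=\ee^{-Av}\Gamma_0\ee^{-A^Tv}$, differentiates both in $v$, invokes the Lyapunov equation $A\Gamma_0+\Gamma_0A^T=-B\SigL B^T$ from \cref{prop-2ndorderOU} to see that $l'(v)=r'(v)$, and concludes by noting $l(0)=r(0)=\Gamma_0$. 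Your substitution argument is more elementary and shorter, needing only convergence of the defining integral (which \cref{assumEigen1-QML} supplies, as you note); the paper's differential argument buys an explicit appeal to the Lyapunov identity, which ties the lemma to the second-order structure established in \cref{prop-2ndorderOU}, but is not logically necessary. Both proofs are complete and correct.
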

\begin{proof}
We define functions $l,r:\R\to M_N(\R)$ by $l(v)=\int_{-v}^\infty{\ee^{Au}B\SigL B^T\ee^{A^Tu}\dd u}$ and $r(v)=\ee^{-Av}\Gamma_0\ee^{-A^Tv}$. Both $l:v\mapsto l(v)$ and $r:v\mapsto r(v)$ are differentiable functions of $v$, satisfying
\begin{align*}
\frac{\dd }{\dd v}l(v)	=& 	\ee^{-Av}B\SigL B^T\ee^{-A^Tv}\quad\text{and}\quad \frac{\dd }{\dd v}r(v)	=	-A\ee^{-Av}\Gamma_0\ee^{-A^Tv} - \ee^{-Av}\Gamma_0A^T\ee^{-A^Tv}.
\end{align*}
Using \cref{prop-2ndorderOU} one sees immediately that $(\dd/\dd v) l(v)=(\dd/\dd v) r(v)$, for all $v\in\R$. Hence, $l$ and $r$ differ only by an additive constant. Since $l(0)$ equals $r(0)$ by the definition of $\Gamma_0$, the constant is zero, and $l(v)=r(v)$ for all real numbers $v$.
\end{proof}

\begin{proposition}
\label{SSMspecfac}
Let $\Y$ be the output process of the state space model \labelcref{eq-ssm-QML}, and denote by $H:z\mapsto C(z\I_N-A)^{-1}B$ its transfer function. Then the relation $f_{\Y}(\omega) = (2\pi)^{-1}H(\ii\omega)\SigL H(-\ii\omega)^T$ holds for all real $\omega$; in particular, $\omega\mapsto f_{\Y}(\omega)$ is a rational matrix function.
\end{proposition}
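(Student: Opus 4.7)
The plan is to compute the Fourier transform of the autocovariance function $\gamma_{\Y}$ directly, splitting into positive and negative lag parts, and then to identify the resulting expression with $H(\ii\omega)\SigL H(-\ii\omega)^T$ by means of the Lyapunov equation for $\Gamma_0$ recorded in \cref{prop-2ndorderOU}.

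First I would note that from \cref{prop-2ndorderOU} we have $\gamma_{\Y}(h)=C\ee^{Ah}\Gamma_0 C^T$ for $h\geq 0$, and by the general identity $\gamma_{\Y}(-h)=\gamma_{\Y}(h)^T$ we get $\gamma_{\Y}(h)=C\Gamma_0\ee^{-A^T h}C^T$ for $h<0$. Since, by \cref{assumEigen1-QML}, the eigenvalues of $A$ (and hence of $A^T$) have strictly negative real parts, the matrix exponentials are integrable on $[0,\infty)$ against $\ee^{\pm\ii\omega h}$, so splitting the integral defining $f_{\Y}$ yields
\begin{align*}
2\pi f_{\Y}(\omega) &= \int_0^\infty \ee^{-\ii\omega h} C\ee^{Ah}\Gamma_0 C^T\,\dd h + \int_{-\infty}^0 \ee^{-\ii\omega h} C\Gamma_0\ee^{-A^T h}C^T\,\dd h \\
&= C(\ii\omega \I_N - A)^{-1}\Gamma_0 C^T + C\Gamma_0(-\ii\omega \I_N - A^T)^{-1}C^T,
\end{align*}
where each integral is evaluated using $\int_0^\infty \ee^{Mh}\dd h = -M^{-1}$ for any matrix $M$ whose spectrum lies in the open left half plane.

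Next I would combine the two terms over the common left factor $C(\ii\omega\I_N-A)^{-1}$ and right factor $(-\ii\omega\I_N-A^T)^{-1}C^T$. A direct computation gives
\begin{equation*}
\Gamma_0(-\ii\omega\I_N-A^T)+(\ii\omega\I_N-A)\Gamma_0 = -(A\Gamma_0+\Gamma_0 A^T),
\end{equation*}
because the $\ii\omega$ terms cancel. Invoking the Lyapunov identity $A\Gamma_0+\Gamma_0 A^T=-B\SigL B^T$ from \cref{prop-2ndorderOU} replaces the bracket by $B\SigL B^T$, so that
\begin{equation*}
2\pi f_{\Y}(\omega) = C(\ii\omega\I_N-A)^{-1}B\SigL B^T(-\ii\omega\I_N-A^T)^{-1}C^T = H(\ii\omega)\SigL H(-\ii\omega)^T,
\end{equation*}
which is the desired identity. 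Rationality of $\omega\mapsto f_{\Y}(\omega)$ then follows immediately, since $H$ is by construction a rational matrix function of its argument and the right hand side is a polynomial combination of $H(\ii\omega)$, $H(-\ii\omega)^T$, and the constant matrix $\SigL$.

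I do not anticipate a genuine obstacle here; the computation is entirely mechanical once the right splitting is chosen. The only subtlety worth flagging is ensuring that the integrals over $(-\infty,0]$ and $[0,\infty)$ are interpreted consistently (convergence at $\pm\infty$ is granted by the stability \cref{assumEigen1-QML}), and that the algebraic cancellation of the $\ii\omega\Gamma_0$ terms is carried out carefully so that the Lyapunov equation can be inserted cleanly.
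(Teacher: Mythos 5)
Your proof is correct, but it runs in the opposite direction from the paper's and uses a different key identity. The paper starts from the product $H(\ii\omega)\SigL H(-\ii\omega)^T$, writes each resolvent as a Laplace transform $\int_0^\infty \ee^{-zu}\ee^{Au}\dd u$, changes variables to $h=u-v$ in the resulting double integral, and then invokes \cref{lemma-inteABSigmaBeA} (the identity $\int_{-v}^\infty \ee^{Au}B\SigL B^T\ee^{A^Tu}\dd u=\ee^{-Av}\Gamma_0\ee^{-A^Tv}$) to recognize the Fourier integral of $\gamma_{\Y}$. You instead compute $\int_{\R}\ee^{-\ii\omega h}\gamma_{\Y}(h)\dd h$ directly, evaluate the two one-sided integrals as resolvents, and merge them via the resolvent identity $(\ii\omega\I_N-A)^{-1}\Gamma_0+\Gamma_0(-\ii\omega\I_N-A^T)^{-1}=(\ii\omega\I_N-A)^{-1}\bigl[(\ii\omega\I_N-A)\Gamma_0+\Gamma_0(-\ii\omega\I_N-A^T)\bigr](-\ii\omega\I_N-A^T)^{-1}$, into which the Lyapunov equation $A\Gamma_0+\Gamma_0A^T=-B\SigL B^T$ is substituted. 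Both arguments ultimately rest on that Lyapunov equation (the paper's \cref{lemma-inteABSigmaBeA} is itself proved from it), so the mathematical content is the same; what your route buys is that it bypasses \cref{lemma-inteABSigmaBeA} and the double-integral change of variables entirely, replacing them with a one-line algebraic cancellation — this is the standard partial-fraction argument for spectral densities of Ornstein--Uhlenbeck-type processes. The paper's route has the minor advantage that \cref{lemma-inteABSigmaBeA} is reused elsewhere. One small point to make explicit in your write-up: the passage from $\gamma_{\Y}(-h)=\gamma_{\Y}(h)^T$ to $\gamma_{\Y}(h)=C\Gamma_0\ee^{-A^Th}C^T$ for $h<0$ uses the symmetry of $\Gamma_0$, which holds by \cref{eq-Varssm-QML} since $\Gamma_0$ is a covariance matrix.
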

\begin{proof}
First, we recall \citep[Proposition 11.2.2]{bernstein2005matrix} that the Laplace transform of any matrix $A$ is given by its resolvent, that is, 
$(z I-A)^{-1}=\int_0^\infty{\ee^{-zu}\ee^{Au}\dd u}$,
for any complex number $z$. We are now ready to compute
\begin{align*}
\frac{1}{2\pi}H(\ii\omega)\SigL H(-\ii\omega)^T=&\frac{1}{2\pi}C\left[\int_0^\infty{\ee^{-\ii\omega u}\ee^{Au}\dd u}B\SigL B^T\int_0^\infty{\ee^{\ii \omega v}\ee^{A^Tv}\dd v}\right]\dd hC^T.
\end{align*}
Introducing the new variable $h=u-v$, and using \cref{lemma-inteABSigmaBeA}, this becomes
\begin{align*}
&\frac{1}{2\pi}C\left[\int_0^\infty{\int_{0}^\infty{\ee^{-\ii\omega h}\ee^{Ah}\ee^{Av}B\SigL B^T\ee^{A^Tv}\dd h\dd v}}+\int_0^\infty{\int_{-v}^0{\ee^{-\ii\omega h}\ee^{Ah}\ee^{Av}B\SigL B^T\ee^{A^Tv}\dd h\dd v}}\right]C^T\\
=&\frac{1}{2\pi}C\left[\int_{0}^\infty{\ee^{-\ii\omega h}\ee^{Ah}\Gamma_0\dd h}+\int_{-\infty}^0{\ee^{-\ii\omega h}\Gamma_0\ee^{-A^Th}\dd h}\right]C^T.
\end{align*}
By \cref{eq-Covssm-QML} and the fact that the spectral density and the autocovariance function of a stochastic process are Fourier duals of each other, the last expression is equal to $(2\pi)^{-1}\int_{-\infty}^\infty{\ee^{-\ii\omega h}\gamma_{\Y}(h)\dd h}=f_{\Y}(\omega)$, which completes the proof.
\end{proof}

A converse of \cref{SSMspecfac}, which will be useful in our later discussion of identifiability, is the Spectral Factorization Theorem. Its proof can be found in \citet[Theorem 1.10.1]{rozanov1967srp}.
\begin{theorem}
\label{specfac}
Every positive definite rational matrix function $f\in \SS_d^+\left(\C\{\omega\}\right)$ of full rank can be factorized as $f(\omega)=(2\pi)^{-1}W(\ii\omega)W(-\ii\omega)^T$, where the rational matrix function $z\mapsto W(z)\in M_{d,N}\left(\R\{z\}\right)$ has full rank and is, for fixed $N$, uniquely determined up to an orthogonal transformation $W(z)\mapsto W(z)O$, for some orthogonal $N\times N$ matrix $O$.
\end{theorem}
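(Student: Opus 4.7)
The plan is to establish existence by a constructive factorization exploiting the symmetry of $f$ across the real axis, and uniqueness by a rigidity argument for rational matrix functions whose poles lie in a fixed half--plane.

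For existence, I would begin by observing that the hypothesis that $f(\omega)$ is Hermitian positive definite for every real $\omega$ propagates, by rationality, to the identity $f(z)^{*} = f(\bar z)$ on the domain of $f$; hence the zeros and poles of $\det f(z)$ come in complex--conjugate pairs. Substituting $\omega = -\ii z$ turns this into symmetry with respect to the imaginary axis. The strategy is then to bring $f(-\ii z)$ into Smith--McMillan normal form, in which the Hermitian--positive structure on the real $\omega$--axis allows the left and right unimodular factors to be chosen so that one is essentially the transpose of the other. This reduces the problem to factoring each scalar diagonal entry $d_j(z)$ as $w_j(z)\, w_j(-z)$, which is the classical root--pairing argument: write $d_j$ as a ratio of polynomials in $z$ and place into $w_j$ exactly those roots of numerator and denominator whose real part is non--positive, keeping complex--conjugate roots together so that $w_j$ has real coefficients. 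Reassembly yields $W \in M_{d,N}(\R\{z\})$ of full rank with $W(\ii\omega)\, W(-\ii\omega)^T = 2\pi f(\omega)$.

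For uniqueness, suppose $W_1, W_2$ are two such full--rank factorizations with the same $N$. By construction both can be taken so that all their poles lie in the same open half--plane, say the right one. A rational $N\times N$ matrix $\Phi(z)$ relating the two factorizations --- constructed, for instance, via a Moore--Penrose pseudo--inverse of $W_1$ --- then satisfies the identity $\Phi(z)\, \Phi(-z)^T = \I_N$ and inherits the one--sided pole structure from $W_1$ and $W_2$. Since $\Phi(-z)^T$ has its poles on the opposite side of the imaginary axis from those of $\Phi(z)$, the identity forces $\Phi$ to be analytic everywhere; an order--of--growth argument at infinity then identifies $\Phi$ with a constant matrix $O$ satisfying $OO^T = \I_N$, so $W_2(z) = W_1(z)\, O$ with $O$ orthogonal.

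The principal obstacle will be making the matrix Smith--McMillan step respect the Hermitian--positive structure: for scalar rational functions positive on $\R$ the root--pairing is essentially trivial, but in the matrix setting one needs either to choose the unimodular diagonalizers consistently, or to perform a Cholesky--type recursive factorization row by row directly on $f(-\ii z)$. Verifying that the resulting $W$ has real coefficients and no spurious poles or cancellations is the technical core of the argument; this is the reason the authors prefer to invoke the ready--made result from \citet{rozanov1967srp} rather than reproducing the construction.
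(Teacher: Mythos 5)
First, note that the paper does not prove this theorem at all: it is quoted as a known result and its proof is delegated to \citet[Theorem 1.10.1]{rozanov1967srp}, so there is no in-paper argument to compare yours against. Judged on its own terms, your uniqueness half is the standard and essentially sound argument: once both factors are normalized so that their poles \emph{and zeros} lie in one closed half-plane, the rational matrix $\Phi$ with $W_2=W_1\Phi$ satisfies $\Phi(z)\Phi(-z)^T=\I_N$, the two factors in this identity have their singularities on opposite sides of the imaginary axis, and a Liouville-type argument forces $\Phi$ to be a constant orthogonal matrix. Two caveats: the theorem as stated suppresses this normalization, without which uniqueness up to a constant orthogonal factor is simply false (in the scalar case $1/(1+z)$ and $1/(1-z)$ factor the same density and differ by a non-constant all-pass function); and you must control the zeros of $W_1$, not only its poles, since they become the poles of $\Phi(-z)^{T}=\Phi(z)^{-1}$ --- your sketch mentions only poles.

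The genuine gap is in the existence half. The Smith--McMillan decomposition $f=S_1MS_2$ of \cref{thmSmith} carries no symmetry between $S_1$ and $S_2$: the unimodular multipliers arise from row and column elementary operations that are in no way adjoint to one another, and positive definiteness of $f$ on the real axis gives you no handle for choosing $S_2(z)$ to be ``essentially'' $S_1(-z)^T$. Reducing matrix spectral factorization to scalar root-pairing on the diagonal therefore does not go through as described; this is precisely why the classical constructive proofs (Youla's algorithm, or Rozanov's Hardy-space argument) replace the Smith--McMillan step by a recursive elimination or Cholesky-type scheme in which positivity is invoked at every stage, or work directly with the maximal analytic factor in $H^2$ of a half-plane. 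You name this obstacle yourself in your closing paragraph, but naming it is not the same as overcoming it, so what you have is an outline whose central step is missing. Given that the authors themselves only cite the result, the economical fix is to do the same; otherwise the existence argument needs to be rebuilt around one of the constructions just mentioned.
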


\subsection{Equidistant observations}
\label{section-sampling}
We now turn to properties of the sampled process $\Y^{(h)}=(\Y^{(h)}_n)_{n\in\mathbb{Z}}$ which is defined by $\Y^{(h)}_n=\Y(nh)$ and represents observations of the process $\Y$ at equally spaced points in time. A very fundamental observation is that the linear state space structure of the continuous\hyp{}time process is preserved under sampling, as detailed in the following proposition. Of particular importance is the explicit formula \labelcref{discspecdens} for the spectral density of the sampled process $\Y^{(h)}$.
\begin{proposition}[{partly \citet[Lemma 5.1]{schlemmmixing2010}}]
\label{prop-sampledSSM}
Assume that $\Y$ is the output process of the state space model \labelcref{eq-ssm-QML}. Then the sampled process $\Y^{(h)}$ has the state space representation
\begin{equation}
\label[pluralequation]{eq-sampledSSM}
\X_n = \ee^{Ah}\X_{n-1}+\NN^{(h)}_n,\quad \NN^{(h)}_n= \int_{(n-1)h}^{nh}{\ee^{A(nh-u)}B\dd \Lb(u)},\quad \Y^{(h)}_n = C\X^{(h)}_n.
\end{equation}
The sequence $\left(\NN^{(h)}_n\right)_{n\in\Z}$ is i.\,i.\,d.\ with mean zero and covariance matrix $\cancel\Sigma^{(h)}=\int_0^h{\ee^{Au}B\SigL  B^T\ee^{A^Tu}\dd u}$. Moreover, the spectral density of $\Y^{(h)}$, denoted by $f_{\Y}^{(h)}$, is given by
\begin{equation}
\label{discspecdens}
f_{\Y}^{(h)}(\omega)=C \left(\ee^{\ii \omega}\I_N-\ee^{A h}\right)^{-1}\cancel\Sigma^{(h)}\left(\ee^{-\ii\omega}\I_N-\ee^{A^T h}\right)^{-1}C^T;
\end{equation}
in particular, $ f_{\Y}^{(h)}:[-\pi,\pi]\to \SS^+_d\left(\R\left\{\ee^{\ii \omega}\right\}\right)$ is a rational matrix function.
\end{proposition}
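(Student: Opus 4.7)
The plan is to prove the three claims in the order they are stated: the discrete state-space recursion, the distributional structure of $\NN^{(h)}$, and the explicit form of the spectral density.

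First, to get the state equation I will simply apply \cref{eq-markovstate-QML} with $t=nh$ and $s=(n-1)h$. This yields
\[
\X(nh) = \ee^{Ah}\X((n-1)h) + \int_{(n-1)h}^{nh}\ee^{A(nh-u)}B\,\dd\Lb(u),
\]
and setting $\X_n \coloneqq \X(nh)$ identifies the right-hand side with $\ee^{Ah}\X_{n-1}+\NN^{(h)}_n$; the observation equation $\Y^{(h)}_n = C\X_n$ then follows directly from \cref{obseq}.

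For the second claim, I will exploit the independent, stationary increments of $\Lb$. Each $\NN^{(h)}_n$ is $\sigma(\Lb(t)-\Lb(s): (n-1)h \leq s < t \leq nh)$-measurable, and since these $\sigma$-algebras are independent for distinct $n$, the random variables $\NN^{(h)}_n$ are independent. After the change of variable $v = u-(n-1)h$, the stationarity of the increments of $\Lb$ shows that $\NN^{(h)}_n$ has the same law as $\int_0^h \ee^{A(h-v)}B\,\dd\Lb(v)$, giving the identical distribution. The mean vanishes since $\E\Lb(1)=\bzero$ by \cref{assum-levy2}, and computing $\E\NN^{(h)}_n(\NN^{(h)}_n)^T$ via the standard covariance formula for L\'evy stochastic integrals, combined with the change of variable $v = nh-u$, yields $\cancel\Sigma^{(h)}$.

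Finally, for the spectral density I will pass through the moving-average representation. Iterating the state equation and using \cref{assumEigen1-QML} (which makes $\|\ee^{Akh}\|$ decay geometrically) gives $\X_n = \sum_{k=0}^\infty \ee^{Akh}\NN^{(h)}_{n-k}$, so that $\Y^{(h)}_n = \sum_{k=0}^\infty C\ee^{Akh}\NN^{(h)}_{n-k}$ is a causal linear process in the white noise $\NN^{(h)}$ with transfer function $\Psi(z) = C(\I_N-\ee^{Ah}z)^{-1}$. The standard spectral density formula for a vector moving average then gives $f^{(h)}_{\Y}(\omega) = \Psi(\ee^{-\ii\omega})\cancel\Sigma^{(h)}\Psi(\ee^{\ii\omega})^T$. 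Applying the algebraic identity $(\I_N - \ee^{Ah}\ee^{-\ii\omega})^{-1} = \ee^{\ii\omega}(\ee^{\ii\omega}\I_N - \ee^{Ah})^{-1}$, together with its transposed counterpart on the right factor, turns this into the claimed form; the two scalar factors $\ee^{\pm\ii\omega}$ cancel. Rationality in $\ee^{\ii\omega}$ is then immediate from the resulting expression. The main technical point is the careful treatment of the L\'evy stochastic integral in step two, while the spectral density calculation is just algebraic manipulation of the rational transfer function.
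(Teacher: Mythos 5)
Your proof is correct and follows the standard route; the paper itself disposes of this proposition in one line by citing \citet[Lemma 5.1]{schlemmmixing2010} for the state space representation and i.i.d.\ noise, and \citet[Eq.\ (10.4.43)]{hamilton1994tsa} for the spectral density, and your argument supplies exactly the details behind those citations (variation of constants for the recursion, independent stationary increments plus the It\^o isometry for the noise, and the $\mathrm{MA}(\infty)$ representation with the identity $(\I_N-\ee^{Ah}\ee^{-\ii\omega})^{-1}=\ee^{\ii\omega}(\ee^{\ii\omega}\I_N-\ee^{Ah})^{-1}$ for the spectral density). No gaps.
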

\begin{proof}
The first part is \citet[Lemma 5.1]{schlemmmixing2010} and Expression \labelcref{discspecdens} follows from \citet[Eq. (10.4.43)]{hamilton1994tsa}.
\end{proof}
In the following we derive conditions for the sampled state space model \labelcref{eq-sampledSSM} to be minimal in the sense that the process $\Y^{(h)}$ is not the output process of any state space model of dimension less than $N$, and for the noise covariance matrix $\cancel\Sigma^{(h)}$ to be non\hyp{}singular. We begin by recalling some well\hyp{}known notions from discrete\hyp{}time realization and control theory. For a detailed account we refer to \citet{aastrom1970isc,sontag1998mathematical}, which also explain the origin of the terminology.
\begin{definition}
Let $H\in M_{d,m}(\R\{z\})$ be a rational matrix function. A matrix triple $(A,B,C)$ is called an {\it algebraic realization of $H$ of dimension $N$} if $H(z) = C(z \I_N-A)^{-1}B$, where $A\in M_N(\R)$, $B\in M_{N,m}(\R)$, and $C\in M_{d,N}(\R)$.
\end{definition}
Every rational matrix function has many algebraic realizations of various dimensions. A particularly convenient class are the ones of minimal dimension, which have a number of useful properties.
\begin{definition}
\label{def-minreal}
Let $H\in M_{d,m}(\R\{z\})$ be a rational matrix function. A {\it minimal realization} of $H$ is an algebraic realization of $H$ of dimension smaller than or equal to the dimension of every other algebraic realization of $H$. The dimension of a minimal realization of $H$ is the {\it McMillan degree} of $H$.
\end{definition}
Two other important properties of algebraic realizations, which are related to the notion of minimality and play a key role in the study of identifiability, are introduced in the following definitions.
\begin{definition}
\label{DefControl}
An algebraic realization $(A,B,C)$ of dimension $N$ is {\it controllable} if the {\it controllability} matrix $\mathscr{C}=\left[\begin{array}{cccc}B & AB & \cdots & A^{n-1}B\end{array}\right]\in M_{m,mN}(\R)$ has full rank.
\end{definition}
\begin{definition}
\label{DefObserv}
An algebraic realization $(A,B,C)$ of dimension $N$ is {\it observable} if the {\it observability} matrix $\mathscr{O}=\left[\begin{array}{cccc}C^T & (CA)^T & \cdots & (CA^{n-1})^T\end{array}\right]^T\in M_{dN,N}(\R)$ has full rank.
\end{definition}
We will often say that a state space system \labelcref{eq-ssm-QML} is minimal, controllable or observable if the corresponding transfer function has this property. In the context of ARMA processes these concepts have been used to investigate the non\hyp{}singularity of the Fisher information matrix \citep{klein2006bezoutian}. The next theorem characterizes minimality in terms of controllability and observability.
\begin{theorem}[{{\citet[Theorem 2.3.3]{hannan1987stl}}}]
\label{theominimality}
A realization $(A,B,C)$ is minimal if and only if it is both controllable and observable.
\end{theorem}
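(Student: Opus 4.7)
The plan is to reduce the claim to a single invariant of the transfer function $H(z) = C(z\I_N - A)^{-1}B$, namely the rank of a finite block Hankel matrix of Markov parameters, and to show that this rank simultaneously characterises minimality on the one hand and the joint controllability/observability condition on the other. Writing $M_k = CA^{k-1}B$ for $k \geq 1$, these coefficients appear as the Laurent expansion $H(z) = \sum_{k \geq 1} M_k z^{-k}$ around infinity, so they depend only on $H$ and not on the particular realization. I therefore form the $N \times N$ block Hankel matrix $\mathcal{H}_N = (M_{i+j-1})_{i,j=1,\ldots,N} \in M_{dN,mN}(\R)$, whose rank is also intrinsic to $H$.

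The key algebraic identity to record is
\begin{equation*}
\mathcal{H}_N = \mathscr{O}\,\mathscr{C},
\end{equation*}
with $\mathscr{O}$ and $\mathscr{C}$ as in \cref{DefObserv,DefControl}; this follows from a direct block-by-block computation, the $i$th block row of $\mathscr{O}$ being $CA^{i-1}$ and the $j$th block column of $\mathscr{C}$ being $A^{j-1}B$. From this factorization the inequality $\rank(\mathcal{H}_N) \leq \min\{\rank(\mathscr{O}), \rank(\mathscr{C})\} \leq N$ is immediate, and $\rank(\mathcal{H}_N) = N$ holds precisely when $\mathscr{O}$ and $\mathscr{C}$ each have rank $N$, that is, precisely when $(A,B,C)$ is both controllable and observable.

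What remains is to identify $\rank(\mathcal{H}_N)$ with the McMillan degree $n_0$. The easy direction is already in hand, since any realization of dimension $n$ gives $\rank(\mathcal{H}_N) \leq n$, and hence $\rank(\mathcal{H}_N) \leq n_0$. The main obstacle, and the most technical step of the argument, is the reverse inequality: one must actually construct a realization of dimension $n_0$. The approach I would take is to choose a rank factorization $\mathcal{H}_N = PQ$ with $P \in M_{dN,n_0}(\R)$ and $Q \in M_{n_0,mN}(\R)$ of full rank $n_0$, to read off $\tilde C$ and $\tilde B$ from the first block row of $P$ and first block column of $Q$ respectively, and to recover $\tilde A \in M_{n_0}(\R)$ from the identity $P \tilde A Q = \mathcal{H}_N^{\uparrow}$, where $\mathcal{H}_N^{\uparrow} = (M_{i+j})_{i,j=1,\ldots,N}$ is the once-shifted Hankel matrix. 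Verifying that $(\tilde A, \tilde B, \tilde C)$ reproduces all Markov parameters $M_k$ then hinges on an induction in $k$ that uses the Cayley--Hamilton theorem to handle indices exceeding $N$.

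Assembling the pieces, $(A,B,C)$ is minimal if and only if $N = n_0$, which by the identification $n_0 = \rank(\mathcal{H}_N)$ and the equivalence in the second paragraph is in turn equivalent to controllability and observability of $(A,B,C)$.
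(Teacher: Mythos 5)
Your proposal is correct, but there is nothing in the paper to compare it against: the result is imported verbatim as \citet[Theorem 2.3.3]{hannan1987stl} and never proved there. The argument you give -- Markov parameters, the block Hankel factorization $\mathcal{H}_N=\mathscr{O}\mathscr{C}$, the identification of $\rank\mathcal{H}_N$ with the McMillan degree via the Ho--Kalman construction -- is the classical realization-theoretic proof and is essentially the one in the cited reference, so in that sense you are reproducing the ``official'' route rather than inventing a new one. Two of your steps carry almost all of the weight and deserve to be flagged as such if you ever write this out in full. First, the inequality $\rank\mathcal{H}_N\leq n_0$ requires forming, for a competing realization of dimension $n<N$, the \emph{extended} observability and controllability matrices with $N$ block rows and columns; this is harmless but should be said, since $\mathcal{H}_N$ is defined relative to the dimension of the given realization. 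Second, the well-definedness of $\tilde A$ from $P\tilde A Q=\mathcal{H}_N^{\uparrow}$ is not automatic: it needs $\im\mathcal{H}_N^{\uparrow}\subseteq\im P$ and $\ker Q\subseteq\ker\mathcal{H}_N^{\uparrow}$, which follow from $\mathcal{H}_N^{\uparrow}=\mathscr{O}A\mathscr{C}$ together with the $A$-invariance of $\im\mathscr{C}$ (equivalently, Cayley--Hamilton) and the dual statement for $\mathscr{O}$. You correctly locate Cayley--Hamilton as the engine of this step, so the plan is sound; it is only the verification $\tilde C\tilde A^{k-1}\tilde B=M_k$ for all $k$ that remains routine bookkeeping.
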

\begin{lemma}
\label{subspaces}
For all matrices $A\in M_N(\R)$, $B\in M_{N,m}(\R)$, $\Sigma\in \SS^{++}_m(\R)$, and every real number $t>0$, the linear subspaces $\im \left[B, AB, \ldots, A^{N-1}B\right]$ and $\im \int_0^t{\ee^{Au}B\Sigma B^T \ee^{A^Tu}\dd u}$ are equal.
\end{lemma}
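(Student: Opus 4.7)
The plan is to prove the two set inclusions $\im P(t) \subseteq \im \mathscr{C}$ and $\im \mathscr{C} \subseteq \im P(t)$ separately, where I write $\mathscr{C} = [B, AB, \ldots, A^{N-1}B]$ and $P(t) = \int_0^t e^{Au} B\Sigma B^T e^{A^T u} \,\dd u$. The main tools will be the Cayley--Hamilton theorem to express matrix exponentials as polynomials in $A$, the positive definiteness of $\Sigma$, and real analyticity of $u \mapsto e^{A^T u} v$.

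For the inclusion $\im P(t) \subseteq \im \mathscr{C}$, I will argue that the integrand takes values in $\im \mathscr{C}$ pointwise, so that the integral (a limit of Riemann sums in the finite-dimensional space $\im \mathscr{C}$) also lies in $\im \mathscr{C}$. Concretely, for any vector $v$ and any $u \in [0,t]$, the vector $w(u) \coloneqq B\Sigma B^T e^{A^T u} v$ lies in $\im B$; applying $e^{Au}$, which by Cayley--Hamilton is a polynomial in $A$ of degree at most $N-1$, gives $e^{Au} w(u) \in \im [B, AB, \ldots, A^{N-1}B] = \im \mathscr{C}$.

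For the reverse inclusion I will use that $P(t)$ is symmetric positive semidefinite, so $\im P(t)^\perp = \ker P(t)$, and similarly $\im \mathscr{C}^\perp = \ker \mathscr{C}^T$. It therefore suffices to show $\ker P(t) \subseteq \ker \mathscr{C}^T$. If $v \in \ker P(t)$, then
\begin{equation*}
0 = v^T P(t) v = \int_0^t \bigl\| \Sigma^{1/2} B^T e^{A^T u} v \bigr\|^2 \,\dd u,
\end{equation*}
and since the integrand is continuous and non-negative it vanishes identically on $[0,t]$. Invertibility of $\Sigma^{1/2}$ gives $B^T e^{A^T u} v = 0$ for all $u \in [0,t]$; by real analyticity this extends to all $u \in \R$, and differentiating $k$ times at $u=0$ yields $B^T (A^T)^k v = 0$ for every $k \geq 0$, i.e.\ $v \in \ker \mathscr{C}^T$.

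I do not anticipate any genuine obstacle: both inclusions are short, and the only subtlety is to invoke Cayley--Hamilton (or equivalently the Krylov subspace interpretation of $\im \mathscr{C}$) in one direction and real analyticity of the entire function $u \mapsto B^T e^{A^T u} v$ in the other. Positive definiteness of $\Sigma$ is used only to replace $\Sigma$ by $\Sigma^{1/2}\Sigma^{1/2}$ and absorb the invertible factor; the result would in fact hold for any $\Sigma \in \SS_m^{++}(\R)$ by the same argument.
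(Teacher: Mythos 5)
Your proof is correct. Note that the paper itself gives no argument at all here: it disposes of the lemma in one line by declaring it a straightforward generalization of \citet[Lemma 12.6.2]{bernstein2005matrix}, i.e.\ of the classical fact that the controllability Gramian and the controllability matrix have the same image. What you have written out is essentially the standard proof of that underlying fact, adapted to carry the weight matrix $\Sigma$: the inclusion $\im P(t)\subseteq\im\mathscr{C}$ via Cayley--Hamilton (each partial sum of the exponential series reduces to a polynomial in $A$ of degree at most $N-1$, and the span of $I,A,\dots,A^{N-1}$ is closed, so the limit $\ee^{Au}$ lies in it and maps $\im B$ into $\im\mathscr{C}$, which is preserved under the Riemann-sum limit), and the reverse inclusion via the quadratic form $v^TP(t)v$, positive definiteness of $\Sigma$, and analyticity of $u\mapsto B^T\ee^{A^Tu}v$. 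Both steps are sound, including the orthogonal-complement reduction $\im P(t)=(\ker P(t))^\perp$ and $\im\mathscr{C}=(\ker\mathscr{C}^T)^\perp$, which correctly reverses the inclusion. The only cosmetic point is your closing remark that the result ``would in fact hold for any $\Sigma\in\SS_m^{++}(\R)$'': that is just the hypothesis of the lemma restated, so it adds nothing; the genuinely relevant observation is that positive definiteness cannot be weakened to semidefiniteness, since $\ker\Sigma^{1/2}$ would then break the step from $\Sigma^{1/2}B^T\ee^{A^Tu}v=0$ to $B^T\ee^{A^Tu}v=0$.
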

\begin{proof}
The assertion is a straightforward generalization of \citet[Lemma 12.6.2]{bernstein2005matrix}. 
\end{proof}

\begin{corollary}
\label{coro-fullrankcov}
If the triple $(A,B,C)$ is minimal of dimension $N$, and $\Sigma$ is positive definite, then the $N\times N$ matrix $\cancel{\Sigma}=\int_0^h{\ee^{Au}B\Sigma B^T \ee^{A^Tu}\dd u}$ has full rank $N$.
\end{corollary}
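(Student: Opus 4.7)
The plan is to combine the two preceding results directly. First I would apply \cref{subspaces} with $t=h$ to rewrite
\[
\im \cancel{\Sigma} \;=\; \im\!\left[B,\,AB,\,\ldots,\,A^{N-1}B\right],
\]
so that the rank of $\cancel{\Sigma}$ coincides with the rank of the controllability matrix $\mathscr{C}$ of the triple $(A,B,C)$. Next, since $(A,B,C)$ is minimal, \cref{theominimality} implies that $(A,B,C)$ is in particular controllable; by \cref{DefControl} this means $\mathscr{C}$ has full rank, which in this setting is $N$ (viewed as a map into $\R^N$). Combining these two observations gives $\dim \im \cancel{\Sigma}=N$, i.e.\ $\cancel{\Sigma}$ has full rank.

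There is really no obstacle here once \cref{subspaces} is available: positive definiteness of $\Sigma$ is used implicitly to ensure that $B\Sigma B^T$ and $BB^T$ have the same image, which is what makes \cref{subspaces} applicable with $\Sigma$ rather than with the identity. The only point worth double-checking in a fuller write-up is that minimality of $(A,B,C)$ is equivalent to controllability together with observability (\cref{theominimality}), and that only the controllability part is actually needed for this corollary. Symmetry of $\cancel{\Sigma}$ then turns the image statement into the rank statement without further work.
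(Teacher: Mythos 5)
Your proof is correct and follows exactly the paper's own argument: minimality gives controllability via \cref{theominimality}, and \cref{subspaces} identifies $\im\cancel{\Sigma}$ with the image of the controllability matrix, which has full rank $N$. The remark about positive definiteness of $\Sigma$ entering through \cref{subspaces} is a fair elaboration but does not change the route.
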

\begin{proof}
By \cref{theominimality}, minimality of $(A,B,C)$ implies controllability, and by \cref{subspaces}, this is equivalent to $\cancel{\Sigma}$ having full rank.
\end{proof}
\begin{proposition}
\label{kalBerThm}
 Assume that $\Y$ is the $d$\hyp{}dimensional output process of the state space model \labelcref{eq-ssm-QML} with $(A,B,C)$ being a minimal realization of McMillan degree $N$. Then a sufficient condition for  the sampled process $\Y^{(h)}$ to have the same McMillan degree, is the {\it Kalman--Bertram criterion}
\begin{equation}
\label{eq-KalBerCrit}
\lambda-\lambda'\neq 2h^{-1}\pi \ii k,\qquad \forall (\lambda, \lambda')\in\sigma(A)\times\sigma(A),\qquad \forall k\in\mathbb{Z}\backslash\{0\}.
\end{equation}
\end{proposition}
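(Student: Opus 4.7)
The plan is to deduce the statement from the characterization of minimality as the conjunction of observability and controllability (\cref{theominimality}), applied to the natural discrete-time realization $(\ee^{Ah}, M, C)$ of the sampled model in \cref{eq-sampledSSM}, where $M \in M_{N,r}(\R)$ is any full-column-rank factor of the discrete noise covariance with $MM^T = \cancel{\Sigma}^{(h)}$. The McMillan degree of $\Y^{(h)}$ is then $N$ precisely when this realization is minimal.

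For observability of the pair $(\ee^{Ah}, C)$, I would invoke the Popov--Belevitch--Hautus test, which requires $\ker(\mu \I_N - \ee^{Ah})\cap\ker C = \{\bzero_N\}$ for every $\mu\in\sigma(\ee^{Ah})$. Since $\sigma(\ee^{Ah})=\{\ee^{\lambda h}:\lambda\in\sigma(A)\}$, the Kalman--Bertram condition \labelcref{eq-KalBerCrit} ensures that the map $\lambda\mapsto \ee^{\lambda h}$ is a bijection between $\sigma(A)$ and $\sigma(\ee^{Ah})$, so to each $\mu$ there corresponds a unique $\lambda\in\sigma(A)$. The structural core of the proof is the claim that under \labelcref{eq-KalBerCrit} one actually has $\ker(\mu \I_N-\ee^{Ah})=\ker(\lambda \I_N-A)$. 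I would verify this by decomposing an eigenvector $\bv$ of $\ee^{Ah}$ for $\mu$ along the generalized $A$-eigenspaces $\{V_{\lambda'}\}_{\lambda'\in\sigma(A)}$: these are $\ee^{Ah}$-invariant and $\ee^{Ah}|_{V_{\lambda'}}$ has the unique eigenvalue $\ee^{\lambda' h}$, so KB forces $\bv\in V_\lambda$; writing $A|_{V_\lambda}=\lambda \I+N_\lambda$ with $N_\lambda$ nilpotent, the identity $\ee^{Ah}\bv=\ee^{\lambda h}\bv$ reduces to $\ee^{N_\lambda h}\bv=\bv$, and expanding the exponential together with nilpotency of $N_\lambda$ yields $N_\lambda\bv=\bzero_N$, i.e.\ $\bv\in\ker(\lambda \I-A)$. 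Minimality of $(A,B,C)$ gives observability of $(A,C)$ by \cref{theominimality}, hence $\ker(\lambda \I-A)\cap\ker C=\{\bzero_N\}$ by PBH, and the equality of the kernels then transfers this to $(\ee^{Ah}, C)$.

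For controllability of $(\ee^{Ah}, M)$, I would exploit \cref{coro-fullrankcov}: when $(A,B,C)$ is minimal and $\SigL$ is positive definite, the matrix $\cancel{\Sigma}^{(h)}$ has full rank $N$, so any factor $M$ with $MM^T=\cancel{\Sigma}^{(h)}$ already has rank $N$ as a matrix, and the controllability matrix $\left[M,\ee^{Ah}M,\ldots,\ee^{(N-1)Ah}M\right]$ trivially has rank $N$. If $\SigL$ is merely positive semidefinite, one falls back on a discrete-time analogue of \cref{subspaces}, identifying $\im\left[M,\ee^{Ah}M,\ldots,\ee^{(N-1)Ah}M\right]$ with $\im\int_0^{Nh}\ee^{Au}B\SigL B^T\ee^{A^Tu}\dd u$ and then invoking \cref{subspaces} and controllability of the continuous-time pair $(A,B)$ to conclude it equals $\R^N$.

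The main obstacle is the structural step in the observability argument: establishing that, under KB, the geometric eigenspaces of $\ee^{Ah}$ and of $A$ coincide, even in the presence of non-trivial Jordan blocks. Without KB, sampling can introduce spurious coincidences among eigenvalues of $\ee^{Ah}$ that merge distinct eigenspaces of $A$ and create eigenvectors of $\ee^{Ah}$ lying outside $\ker(\lambda \I-A)$, breaking the PBH argument. The handling of this via a Jordan-canonical-form (or spectral-projection) decomposition is the non-routine part; once it is in place, observability and controllability assemble into minimality through \cref{theominimality}, and the sampled McMillan degree equals $N$.
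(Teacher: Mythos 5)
Your proof follows essentially the same route as the paper's: minimality of the sampled realization is established via \cref{theominimality} by combining observability of the pair $(\ee^{Ah},C)$ — which the paper obtains by citing \citet[Proposition 5.2.11]{sontag1998mathematical} together with the Hautus criterion, and which your Jordan/generalized-eigenspace argument proves in detail — with controllability, which both you and the paper deduce from the full rank of $\cancel{\Sigma}^{(h)}$ provided by \cref{coro-fullrankcov}. Your argument is correct; the only difference is that you supply the eigenspace computation that the paper delegates to the reference.
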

\begin{proof}
We will prove the assertion by showing that the $N$\hyp{}dimensional state space representation \labelcref{eq-sampledSSM} is both controllable and observable, and thus, by \cref{theominimality}, minimal. Observability has been shown in \citet[Proposition 5.2.11]{sontag1998mathematical} using the Hautus criterion \citep{hautus1969controllability}. The key ingredient in the proof of controllability is \cref{coro-fullrankcov}, where we showed that the autocovariance matrix $\cancel{\Sigma}^{(h)}$ of $\NN^{(h)}_n$, given in \cref{prop-sampledSSM}, has full rank; this shows that the representation \labelcref{eq-sampledSSM} is indeed minimal and completes the proof.
\end{proof}
Since, by \citet[Theorem 2.3.4]{hannan1987stl}, minimal realizations are unique up to a change of basis $(A,B,C)\mapsto (TAT^{-1},TB,CT^{-1})$, for some non\hyp{}singular $N\times N$ matrix $T$, and such a transformation does not change the eigenvalues of $A$, the criterion \labelcref{eq-KalBerCrit} does not depend on what particular triple $(A,B,C)$ one chooses. Uniqueness of the principal logarithm
implies the following.
\begin{lemma}
\label{princlog}
Assume that the matrices $A,B\in M_N(\R)$ satisfy $\ee^{hA}=\ee^{hB}$ for some $h>0$. If the spectra $\sigma_A,\sigma_B$ of $A,B$ satisfy $|\imag\lambda|<\pi/h$ for all $\lambda\in\sigma_A\cup\sigma_B$, then $A=B$.
\end{lemma}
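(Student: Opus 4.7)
My plan is to exhibit both $hA$ and $hB$ as the principal matrix logarithm of $M\coloneqq \ee^{hA}=\ee^{hB}$, whose uniqueness then forces $hA=hB$ and hence $A=B$. The spectral hypothesis $|\imag\lambda|<\pi/h$ is precisely what is required to make the principal branch of the scalar logarithm a genuine left inverse of $\exp$ on $\sigma(A)\cup\sigma(B)$.

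First I would verify that $M$ has no eigenvalue on the non\hyp{}positive real axis, which is the standard domain condition for the principal matrix logarithm to be well defined. By the spectral mapping theorem $\sigma(M)=\{\ee^{h\lambda}:\lambda\in\sigma(A)\}$; the value $0$ is excluded because the exponential is never zero, and $\ee^{h\lambda}<0$ would force $h\imag\lambda\in\pi(2\Z+1)$, contradicting $|\imag\lambda|<\pi/h$.

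The main step is the identity $\log(\ee^{hA})=hA$, where $\log$ denotes the principal matrix logarithm, and analogously for $B$. I would Jordan\hyp{}decompose $A=S\bigl(\bigoplus_k J_k\bigr)S^{-1}$ with $J_k=\lambda_k\I+N_k$ and $N_k$ nilpotent; since both the exponential and the principal logarithm commute with similarity transformations and with block\hyp{}diagonal decomposition, the identity reduces to the single\hyp{}block statement $\log(\ee^{hJ_k})=hJ_k$. Writing $\ee^{hJ_k}=\ee^{h\lambda_k}(\I+U)$ with $U=\ee^{hN_k}-\I$ nilpotent, the finite Taylor series gives $\log(\I+U)=hN_k$ (this is the formal\hyp{}series identity $\log\circ\exp=\mathrm{id}$ applied to nilpotents), and combining this with the scalar identity $\log(\ee^{h\lambda_k})=h\lambda_k$, which is valid precisely because $|\imag(h\lambda_k)|<\pi$ by hypothesis, yields the block\hyp{}level claim. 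A more compact alternative is to invoke the holomorphic functional calculus and its composition rule: on the strip $\{z:|\imag z|<\pi/h\}$ the scalar function $z\mapsto\log(\ee^{hz})$ coincides with $z\mapsto hz$, and this identity lifts directly to $A$.

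Applying the identity twice gives $hA=\log(M)=hB$, and division by $h>0$ yields $A=B$. I expect the main obstacle to be the block\hyp{}wise computation in the second step, since this is the only place where the strip hypothesis is genuinely exploited and the bookkeeping involving the nilpotent parts must be handled with some care; the remaining steps reduce to the spectral mapping theorem and the standard transformation properties of matrix functions.
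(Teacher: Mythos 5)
Your argument is correct and follows exactly the route the paper takes: the paper disposes of this lemma with the single remark that ``uniqueness of the principal logarithm implies the following,'' and your proof is a careful working-out of precisely that idea, i.e.\ checking that $\ee^{hA}$ has no spectrum on $(-\infty,0]$ and that the strip condition $|\imag\lambda|<\pi/h$ forces the principal matrix logarithm to recover $hA$ and $hB$ from their common exponential. Nothing further is needed.
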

\begin{lemma}
\label{injectivity}
Assume that $A\in M_N(\R)$ satisfies \cref{assumEigen1-QML}. For every $h>0$, the linear map $\mathscr{M}:M_N(\R)\to M_N(\R)$, $M\mapsto\int_0^h{\ee^{Au} M \ee^{A^Tu}\dd u}$ is injective.
\end{lemma}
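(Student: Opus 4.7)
The plan is to vectorize the map $\mathscr{M}$ and reduce the question to the invertibility of a matrix whose eigenvalues can be computed explicitly from $\sigma(A)$.

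First, I would apply the identity $\vec(XMY^T) = (Y\otimes X)\vec(M)$ to rewrite
\[
\vec\bigl(\mathscr{M}(M)\bigr) = \left[\int_0^h \bigl(\ee^{Au}\otimes \ee^{Au}\bigr)\,\dd u\right]\vec(M).
\]
Since $A\otimes\I_N$ and $\I_N\otimes A$ commute, $\ee^{Au}\otimes \ee^{Au} = \ee^{Ku}$ with $K \coloneqq A\otimes \I_N + \I_N\otimes A$. The spectrum of $K$ is $\{\lambda+\lambda':\lambda,\lambda'\in\sigma(A)\}$, so by \cref{assumEigen1-QML} every eigenvalue of $K$ has strictly negative real part; in particular $K$ is non-singular, and hence $\int_0^h \ee^{Ku}\,\dd u = K^{-1}(\ee^{Kh}-\I_{N^2})$.

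Second, I would show that $\ee^{Kh}-\I_{N^2}$ is invertible. Its eigenvalues are $\ee^{(\lambda+\lambda')h}-1$ for $\lambda,\lambda'\in\sigma(A)$, each of which has modulus $\ee^{\operatorname{Re}(\lambda+\lambda')h}<1$ in absolute value before subtracting $1$, so none equals $1$; consequently all of these eigenvalues are non-zero. Therefore $\int_0^h \ee^{Ku}\,\dd u$ is invertible, which shows that $\vec(M)\mapsto \vec(\mathscr{M}(M))$ is injective and hence $\mathscr{M}$ itself is injective.

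There is no real obstacle here: the only step that requires care is verifying that $K$ and $\ee^{Kh}-\I_{N^2}$ are both non-singular, which follows cleanly from \cref{assumEigen1-QML} together with the spectral mapping theorem. An alternative one-line argument avoiding the Kronecker formalism would be to observe that if $\mathscr{M}(M)=0$ then $\int_0^h \ee^{Au}M\ee^{A^Tu}\,\dd u = 0$, and combined with $\int_0^\infty \ee^{Au}M\ee^{A^Tu}\,\dd u$ being well-defined (by \cref{assumEigen1-QML}) and solving the Lyapunov equation $AX+XA^T=-M$, one can iterate to conclude $M=0$; but the Kronecker approach is more direct.
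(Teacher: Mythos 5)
Your proof is correct and follows essentially the same route as the paper: vectorize via the Kronecker identity, identify the integrand as $\ee^{(A\oplus A)u}$, and conclude invertibility of $\int_0^h\ee^{(A\oplus A)u}\,\dd u=(A\oplus A)^{-1}\bigl[\ee^{(A\oplus A)h}-\I_{N^2}\bigr]$ from the spectral mapping theorem and \cref{assumEigen1-QML}. No substantive differences to report.
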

\begin{proof}
If we apply the vectorization operator $\vec:M_N(\R)\to\R^{N^2}$ and use the well\hyp{}known identity \citep[Proposition 7.1.9]{bernstein2005matrix} $\vec(UVW)=(W^T\otimes U)\vec(V)$ for matrices $U,V$ and $W$ of appropriate dimensions, we obtain the induced linear operator
\begin{equation*}
\vec\circ\mathscr{M}\circ\vec^{-1}:\R^{N^2}\to\R^{N^2},\quad \vec M\mapsto \int_0^h{\ee^{Au}\otimes \ee^{Au}\dd u}\vec M.
\end{equation*}
To prove the claim that the operator $\mathscr{M}$ is injective, it is thus sufficient to show that the matrix $\mathscr{A}\coloneqq \int_0^h{\ee^{Au}\otimes \ee^{Au}\dd u}\in M_{N^2}(\R)$ is non\hyp{}singular. We write $A\oplus A\coloneqq A\otimes \I_N+\I_N\otimes A$. By \citet[Fact 11.14.37]{bernstein2005matrix}, $\mathscr{A} = \int_0^h{\ee^{(A\oplus A)u}\dd u}$ and since $\sigma(A\oplus A)=\{\lambda+\mu:\lambda,\mu\in\sigma(A)\}$ \citep[Proposition 7.2.3]{bernstein2005matrix}, \cref{assumEigen1-QML} implies that all eigenvalues of the matrix $A\oplus A$ have strictly negative real parts; in particular, $A\oplus A$ is invertible. Consequently, it follows from \citet[Fact 11.13.14]{bernstein2005matrix} that $\mathscr{A} = (A\oplus A)^{-1}\left[\ee^{(A\oplus A)h}-\I_{N^2}\right]$. Since, for any matrix $M$, it holds that $\sigma(\ee^M)=\{\ee^\lambda, \lambda\in\sigma(M)\}$ \citep[Proposition 11.2.3]{bernstein2005matrix}, the spectrum of $\ee^{(A\oplus A)h}$ is a subset of the open unit disk, and it follows that $\mathscr{A}$ is invertible.
\end{proof}

\subsection{Overcoming the aliasing effect}
\label{section-identifiability}

One goal in this paper is the estimation of multivariate CARMA processes or, equivalently, continuous\hyp{}time state space models, based on discrete observations. In this brief section we concentrate on the issue of identifiability, and we derive sufficient conditions that prevent redundancies from being introduced into an otherwise properly specified model by the process of sampling, an effect known as aliasing \citep{hansen1983dimensionality}.

For ease of notation we choose to parametrize the state matrix, the input matrix, and the observation matrix of the state space model \labelcref{eq-ssm-QML}, as well as the driving L\'evy process $\Lb$; from these one can always obtain an autoregressive and a moving average polynomial which describe the same process by applying a left matrix fraction decomposition to the corresponding transfer function
We hence assume that there is some compact parameter set $\Theta\subset\R^r$, and that, for each $\bth\in\Theta$, one is given matrices $A_{\bth} $, $B_{\bth} $ and $C_{\bth} $ of matching dimensions, as well as a L\'evy process $\Lb_{\bth}$. A basic assumption is that we always work with second order processes (cf. \cref{assum-levy2}).
\begin{assumptionparaC}
\label{assum-2momentsCT}
For each $\bth\in\Theta$, it holds that $\E\Lb_{\bth}=\bzero_m$, that $\E\left\|\Lb_{\bth}(1)\right\|^2$ is finite, and that the covariance matrix $\SigL_{\bth}=\E\Lb_{\bth}(1)\Lb_{\bth}(1)^T$ is non\hyp{}singular.
\end{assumptionparaC}
To ensure that the model corresponding to $\bth$ describes a stationary output process we impose the analogue of \cref{assumEigen1-QML}.
\begin{assumptionparaC}
\label{assum-stabilityCT}
For each $\bth\in\Theta$, the eigenvalues of $A_{\bth}$ have strictly negative real parts.
\end{assumptionparaC}
Next, we restrict the model class to minimal algebraic realizations of a fixed McMillan degree.
\begin{assumptionparaC}
\label{assum-minimalityCT}
For all $\bth\in\Theta$, the triple $\left(A_{\bth} ,B_{\bth} ,C_{\bth} \right)$ is minimal with McMillan degree $N$.
\end{assumptionparaC}
Since we shall base the inference on a QML approach and thus on second\hyp{}order properties of the observed process, we require the model class to be identifiable from these available information according to the following definitions.
\begin{definition}
Two stochastic processes, irrespective of whether their index sets are continuous or discrete, are {\it $L^2$\hyp{}observationally equivalent} if their spectral densities are the same. 
\end{definition}
\begin{definition}
\label{IdentSpecDens}
A family $\left(\Y_{\bth},\bth\in\Theta\right)$ of continuous\hyp{}time stochastic processes is {\it identifiable from the spectral density} if, for every $\bth_1\neq\bth_2$, the two processes $\Y_{\bth_1}$ and $\Y_{\bth_2}$ are not $L^2$\hyp{}observationally equivalent. It is {\it $h$\hyp{}identifiable from the spectral density}, $h>0$, if, for every $\bth_1\neq\bth_2$, the two sampled processes $\Y_{\bth_1}^{(h)}$ and $\Y_{\bth_2}^{(h)}$ are not $L^2$\hyp{}observationally equivalent.
\end{definition}
\begin{assumptionparaC}
\label{assum-identifiabilityCT}
The collection of output processes $K(\Theta)\coloneqq\left(\Y_{\bth},\bth\in\Theta\right)$ corresponding to the state space models $\left(A_{\bth} ,B_{\bth} ,C_{\bth} ,\Lb_{\bth}\right)$ is  identifiable from the spectral density.
\end{assumptionparaC}
Since we shall use only discrete, $h$\hyp{}spaced observations of $\Y$, it would seem more natural to impose the stronger requirement that $K(\Theta)$ be $h$\hyp{}identifiable. We will see, however, that this is implied by the previous assumptions if we additionally assume that the following holds.
\begin{assumptionparaC}
\label{assum-spectrumCT}
For all $\bth\in\Theta$, the spectrum of $A_{\bth} $ is a subset of $\left\{z\in\C:-\pi/h<\imag z < \pi/h\right\}$.
\end{assumptionparaC}

\begin{theorem}[Identifiability]
\label{discident}
Assume that $\Theta\supset\bth\mapsto\left(A_{\bth} ,B_{\bth} ,C_{\bth} ,\SigL_{\bth}\right)$ is a parametrization of continuous\hyp{}time state space models satisfying \cref{assum-2momentsCT,assum-stabilityCT,assum-minimalityCT,assum-identifiabilityCT,assum-spectrumCT}. Then the corresponding collection of output processes $K(\Theta)$ is $h$\hyp{}identifiable from the spectral density.
\end{theorem}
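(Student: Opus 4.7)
The plan is to prove the contrapositive at the level of spectral densities: if the sampled processes satisfy $f^{(h)}_{\Y_{\bth_1}}=f^{(h)}_{\Y_{\bth_2}}$, then in fact $f_{\Y_{\bth_1}}=f_{\Y_{\bth_2}}$, whereupon \cref{assum-identifiabilityCT} forces $\bth_1=\bth_2$. The pivotal observation is that sampling preserves minimality under our hypotheses. Indeed, \cref{assum-spectrumCT} implies the Kalman--Bertram criterion \labelcref{eq-KalBerCrit}, since $|\imag(\lambda-\lambda')|<2\pi/h$ rules out $\lambda-\lambda'\in 2\pi h^{-1}\ii\Z\setminus\{0\}$. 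Hence by \cref{kalBerThm} the sampled representation \labelcref{eq-sampledSSM} is a minimal discrete-time state space model of McMillan degree $N$, and by \cref{coro-fullrankcov} the discrete noise covariance $\cancel\Sigma^{(h)}_{\bth}$ is non-singular for every $\bth\in\Theta$.

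Given $f^{(h)}_{\Y_{\bth_1}}=f^{(h)}_{\Y_{\bth_2}}$, I would factor $\cancel\Sigma^{(h)}_{\bth_i}=\tilde B_i\tilde B_i^T$ with $\tilde B_i\in M_N(\R)$ invertible, so that by \cref{discspecdens} both spectral densities factor as $(2\pi)^{-1}W_i(\ee^{\ii\omega})W_i(\ee^{-\ii\omega})^T$ with $W_i(z)=C_{\bth_i}(z\I_N-\ee^{A_{\bth_i}h})^{-1}\tilde B_i$. Theorem~\ref{specfac}, combined with the minimality of both realizations of the same degree $N$ and the classical similarity theorem for minimal state space realizations (\citet[Theorem~2.3.4]{hannan1987stl}), then produces an invertible $T\in M_N(\R)$ such that
\begin{equation*}
\ee^{A_{\bth_1}h}=T\ee^{A_{\bth_2}h}T^{-1},\qquad C_{\bth_1}=C_{\bth_2}T^{-1},\qquad \cancel\Sigma^{(h)}_{\bth_1}=T\cancel\Sigma^{(h)}_{\bth_2}T^T.
\end{equation*}

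The final step is to promote this similarity to the continuous level. Since both $A_{\bth_1}$ and $TA_{\bth_2}T^{-1}$ have eigenvalues in the strip $|\imag z|<\pi/h$ by \cref{assum-spectrumCT} and they agree after exponentiation by $h$, \cref{princlog} gives $A_{\bth_1}=TA_{\bth_2}T^{-1}$. Substituting this into the third of the three identities above, pulling $T$ inside the integral in the definition of $\cancel\Sigma^{(h)}_{\bth_1}$, and invoking injectivity of the operator $M\mapsto\int_0^h\ee^{A_{\bth_1}u}M\ee^{A_{\bth_1}^T u}\dd u$ (\cref{injectivity}) yields $B_{\bth_1}\SigL_{\bth_1}B_{\bth_1}^T=TB_{\bth_2}\SigL_{\bth_2}B_{\bth_2}^T T^T$. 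Plugging all three continuous-time identities into the formula of \cref{SSMspecfac} and using $(s\I_N-A_{\bth_1})^{-1}=T(s\I_N-A_{\bth_2})^{-1}T^{-1}$, the factors of $T$ cancel and $f_{\Y_{\bth_1}}\equiv f_{\Y_{\bth_2}}$, whence $\bth_1=\bth_2$ by \cref{assum-identifiabilityCT}. The main obstacle I anticipate is the second paragraph's application of spectral factorization: one has to verify that the minimality of the sampled model lets us upgrade the orthogonal ambiguity from \cref{specfac} to an honest state space isomorphism, and that the ambiguity in choosing square roots $\tilde B_i$ can be absorbed into the similarity $T$. Beyond this bookkeeping, \cref{princlog} and \cref{injectivity} are tailored precisely to the needs of the last step, and \cref{assum-spectrumCT} plays exactly the dual role required: it disables aliasing at the discrete level (via Kalman--Bertram) and rules out logarithmic ambiguity at the continuous level.
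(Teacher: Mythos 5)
Your proposal is correct and follows essentially the same route as the paper's proof: Assumption C5 yields the Kalman--Bertram criterion and hence minimality of the sampled realization, the Spectral Factorization Theorem together with \citet[Theorem~2.3.4]{hannan1987stl} produces the similarity $T$ (with the orthogonal factor from \cref{specfac} absorbed exactly as you anticipate), and \cref{princlog} and \cref{injectivity} lift the three identities back to the continuous-time level, forcing equality of the continuous-time spectral densities and a contradiction with \cref{assum-identifiabilityCT}. The only cosmetic differences are that the paper argues by contradiction rather than contraposition and works with the positive definite square roots $\cancel\Sigma^{(h),1/2}_{\bth_i}$ explicitly.
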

\begin{proof}
We will show that for every $\bth_1,\bth_2\in\Theta$, $\bth_1\neq\bth_2$, the sampled output processes $\Y^{(h)}_{\bth_1}$ and $\Y^{(h)}_{\bth_2}{(h)}$ are not $L^2$\hyp{}observationally equivalent. Suppose, for the sake of contradiction, that the spectral densities of the sampled output processes were the same. Then the Spectral Factorization Theorem (\cref{specfac}) would imply that there exists an orthogonal $N\times N$ matrix $O$ such that
\begin{equation*}
C_{\bth_1} (\ee^{\ii \omega}\I_N-\ee^{A_{\bth_1} h})\cancel\Sigma^{(h),1/2}_{\bth_1}O=C_{\bth_2} (\ee^{\ii \omega}\I_N-\ee^{A_{\bth_2} h})\cancel\Sigma^{(h),1/2}_{\bth_2},\quad -\pi\leq\omega\leq\pi,
\end{equation*}
where $\cancel\Sigma^{(h),1/2}_{\bth_i}$ are the unique positive definite matrix square roots of the matrices $\int_0^h{\ee^{A_{\bth_i} u}B_{\bth_i} \SigL_{\bth_i}B_{\bth_i}^T\ee^{A_{\bth_i}^Tu}\dd u}$, defined by spectral calculus. This means that the two triples
\begin{equation*}
\left(\ee^{A_{\bth_1} h},\cancel\Sigma^{(h),1/2}_{\bth_1}O,C_{\bth_1} \right)\quad\text{and}\quad\left(\ee^{A_{\bth_2} h},\cancel\Sigma^{(h),1/2}_{\bth_2},C_{\bth_2} \right)
\end{equation*}
are algebraic realizations of the same rational matrix function. Since \cref{assum-spectrumCT} clearly implies the Kalman--Bertram criterion \labelcref{eq-KalBerCrit}, it follows from \cref{kalBerThm} in conjunction with \cref{assum-minimalityCT} that these realizations are minimal, and hence from \citet[Theorem 2.3.4]{hannan1987stl} that there exists an invertible matrix $T\in M_N(\R)$ satisfying
\begin{equation}
\label{eq-threeconditions}
\ee^{A_{\bth_1} h}=T^{-1}\ee^{A_{\bth_2} h}T,\qquad\cancel\Sigma^{(h),1/2}_{\bth_1}O =T^{-1}\cancel\Sigma^{(h),1/2}_{\bth_2},\qquad C_{\bth_1}  =C_{\bth_2} T.
\end{equation}
It follows from the power series representation of the matrix exponential that $T^{-1}\ee^{A_{\bth_2} h}T$ equals $\ee^{T^{-1}A_{\bth_2}T h}$. Under \cref{assum-spectrumCT}, the first equation in conjunction with \cref{princlog} therefore implies that $A_{\bth_1} =T^{-1}A_{\bth_2} T$. Using this, the second of the three equations \labelcref{eq-threeconditions} gives \begin{equation*}
\cancel\Sigma^{(h)}_{\bth_1} =\int_0^h{\ee^{A_{\bth_1} u}\left(T^{-1}B_{\bth_2}\right)\SigL_{\bth_2}\left(T^{-1}B_{\bth_2}\right)^T\ee^{A_{\bth_1}^Tu}\dd u},
\end{equation*}
which, by \cref{injectivity}, implies that $(T^{-1}B_{\bth_2} )\SigL_{\bth_2}(T^{-1}B_{\bth_2} )^T = B_{\bth_1}\Sigma_{\bth_1}^{\Lb}B_{\bth_1}^T$. Together with the last of the equations \labelcref{eq-threeconditions} and \cref{prop-sampledSSM} it follows that $f_{\bth_1}=f_{\bth_2}$, which contradicts \cref{assum-identifiabilityCT} that $\Y_{\bth_1}$ and $\Y_{\bth_2}$ are not $L^2$\hyp{}observationally equivalent.
\end{proof}

\subsection{Asymptotic properties of the QML estimator}
\label{section-asymptoticmcarma}

In this section we apply the theory that we developed in \cref{section-QMLDTSSM} for the QML estimation of general discrete\hyp{}time linear state space models to the estimation of continuous\hyp{}time linear state space models or, equivalently, multivariate CARMA processes. We have already seen that a discretely observed MCARMA process can be represented by a discrete\hyp{}time state space model and that, thus, a parametric family of MCARMA processes induces a parametric family of discrete\hyp{}time state space models. \Cref{eq-sampledSSM} show that sampling with spacing $h$ maps the continuous\hyp{}time state space models $\left(A_{\bth},B_{\bth},C_{\bth},\Lb_{\bth}\right)_{\bth\in\Theta}$ to the discrete\hyp{}time state space models
\begin{equation}
\label{eq-sampledSSmfamiliy}
\left(\ee^{A_{\bth}h},C_{\bth},\NN^{(h)}_{\bth},\bzero\right)_{\bth\in\Theta},\quad \NN^{(h)}_{\bth,n}=\int_{(n-1)h}^{nh}\ee^{A_{\bth}u}B_{\bth}\dd\Lb_{\bth}(u).
\end{equation}
which are not in the innovations form \labelcref{eq-ssminnoform}. The QML estimator $\hat\bth^{L,(h)}$ is defined by \cref{eq-hatlikelihood}, applied to the state space model \labelcref{eq-sampledSSmfamiliy}, that is
\begin{subequations}
\label[pluralequation]{eq-DefhatbthLh}
\begin{align}
\hat\bth^{L,(h)} =& \argmin_{\bth\in\Theta}{\widehat{\mathscr{L}}^{(h)}(\bth,\y^{L,(h)})},\\
\widehat{\mathscr{L}}^{(h)}(\bth,\y^{L,(h)})=&\sum_{n=1}^L{\left[d\log{2\pi} + \log\det V^{(h)}_{\bth} + \hat\beps^{(h),T}_{\bth,n}V_{\bth}^{(h),-1}\hat\beps^{(h)}_{\bth,n}\right]},
\end{align}
\end{subequations}
where $\hat\beps^{(h)}_{\bth}$ are the pseudo\hyp{}innovations of the observed process $\Y^{(h)}=\Y^{(h)}_{\bth_0}$, which are computed from the sample $\y^{L,(h)} = (\Y^{(h)}_1,\ldots,\Y^{(h)}_L)$ via the recursion
\begin{equation*}
\hat\X_{\bth,n} = \left(\ee^{A_{\bth}h}-K^{(h)}_{\bth}C_{\bth}\right)\hat\X_{\bth,n-1}+K^{(h)}_{\bth}\Y^{(h)}_{n-1},\quad \hat\beps^{(h)}_{\bth,n} = \Y^{(h)}_n - C_{\bth}\hat\X_{\bth,n},\quad n\in\N.
\end{equation*}
The initial value $\hat\X_{\bth,1}$ may be chosen in the same ways as in the discrete\hyp{}time case. The steady-state Kalman gain matrices $K^{(h)}_{\bth}$ and pseudo-covariances $V^{(h)}_{\bth}$ are computed as functions of the unique positive definite solution $\Omega^{(h)}_{\bth}$ to the discrete\hyp{}time algebraic Riccati equation
\begin{equation*}
\Omega^{(h)}_{\bth} = \ee^{A_{\bth}h}\Omega^{(h)}_{\bth}\ee^{A^T_{\bth}h}+\cancel\Sigma^{(h)}_{\bth}-\left[\ee^{A_{\bth}h}\Omega^{(h)}_{\bth} C_{\bth}^T\right]\left[C_{\bth}\Omega^{(h)}_{\bth} C_{\bth}^T\right]^{-1}\left[\ee^{A_{\bth}h}\Omega^{(h)}_{\bth} C_{\bth}^T\right]^T,
\end{equation*}
namely
\begin{equation*}
K^{(h)}_{\bth} = \left[\ee^{A_{\bth}h}\Omega^{(h)}_{\bth} C_{\bth}^T\right]\left[C_{\bth}\Omega^{(h)}_{\bth} C_{\bth}^T\right]^{-1},\quad V^{(h)}_{\bth} = C_{\bth}\Omega^{(h)}_{\bth}C_{\bth}^T.
\end{equation*}
In order to obtain the asymptotic normality of the QML estimator for multivariate CARMA processes, it is therefore only necessary to make sure that \cref{assum-compact,assum-smoothparam,assum-smoothparam3,assum-stability,assum-2moments,assum-4moments,assum-mixing,assum-interior,assum-identifiability1,assum-identifiabilityFisher} hold for the model \labelcref{eq-sampledSSmfamiliy}. The discussion of identifiability in the previous section allows us to specify accessible conditions on the parametrization of the continuous\hyp{}time model under which the QML estimator is strongly consistent. In addition to the identifiability assumptions \labelcref{assum-minimalityCT,assum-identifiabilityCT,assum-spectrumCT}, we impose the following conditions.

\begin{assumptionparaC}
\label{assum-compactCT}
The parameter space $\Theta$ is a compact subset of $\R^r$.
\end{assumptionparaC}

\begin{assumptionparaC}
\label{assum-smoothparamCT}
The functions $\bth\mapsto A_{\bth}$, $\bth\mapsto B_{\bth}$, $\bth\mapsto C_{\bth}$, and $\bth\mapsto \SigL_{\bth}$ are continuous. Moreover, for each $\bth\in\Theta$, the matrix $C_{\bth}$ has full rank.
\end{assumptionparaC}

\begin{lemma}
\label{lemma-C47D14}
\Cref{assum-minimalityCT,assum-compactCT,assum-smoothparamCT,assum-stabilityCT,assum-2momentsCT} imply that the family $\left(\ee^{A_{\bth}h},C_{\bth},\NN^{(h)}_{\bth},\bzero\right)_{\bth\in\Theta}$ of discrete\hyp{}time state space models  satisfies \cref{assum-compact,assum-smoothparam,assum-stability,assum-2moments}.
\end{lemma}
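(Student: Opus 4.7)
The plan is to verify \cref{assum-compact,assum-smoothparam,assum-stability,assum-2moments} in turn for the sampled family, under the identifications $F_{\bth}=\ee^{A_{\bth}h}$, $H_{\bth}=C_{\bth}$, $Q_{\bth}=\cancel\Sigma^{(h)}_{\bth}$, and $S_{\bth}=R_{\bth}=0$. \Cref{assum-compact} is a verbatim restatement of \cref{assum-compactCT}, and \cref{assum-2moments} is immediate from the first part of \cref{prop-sampledSSM}, which identifies $\NN^{(h)}_{\bth_0}$ as an i.i.d.\ (hence stationary and ergodic) sequence, together with $\W_{\bth_0}\equiv\bzero$. Neither of these requires any work beyond citation.

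For \cref{assum-smoothparam}, continuity of $\bth\mapsto C_{\bth}$ and of the constantly-zero maps $\bth\mapsto S_{\bth}$, $\bth\mapsto R_{\bth}$ is either part of \cref{assum-smoothparamCT} or trivial; continuity of $\bth\mapsto\ee^{A_{\bth}h}$ follows from continuity of the matrix exponential together with \cref{assum-smoothparamCT}; and continuity of $\bth\mapsto\cancel\Sigma^{(h)}_{\bth}=\int_0^h\ee^{A_{\bth}u}B_{\bth}\SigL_{\bth}B_{\bth}^T\ee^{A_{\bth}^Tu}\dd u$ will be obtained from dominated convergence, the integrand being uniformly bounded on $[0,h]\times\Theta$ by compactness of $\Theta$ and continuity in $\bth$.

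The three parts of \cref{assum-stability} are more substantive. Part \labelcref{assum-stabilityF} is a direct consequence of \cref{assum-stabilityCT} together with the spectral mapping identity $\sigma(\ee^{A_{\bth}h})=\{\ee^{\lambda h}:\lambda\in\sigma(A_{\bth})\}$, since $|\ee^{\lambda h}|=\ee^{\re(\lambda)h}<1$ whenever $\re\lambda<0$. Part \labelcref{assum-stabilityQS} reduces to positive definiteness of $Q_{\bth}=\cancel\Sigma^{(h)}_{\bth}$, which is exactly \cref{coro-fullrankcov} applied to the minimal triple provided by \cref{assum-minimalityCT} with the nonsingular covariance $\SigL_{\bth}$ of \cref{assum-2momentsCT}.

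I expect part \labelcref{assum-stabilityV}, the non-singularity of $V_{\bth}=C_{\bth}\Omega_{\bth}C_{\bth}^T$, to be the main obstacle. Having just verified parts \labelcref{assum-stabilityF,assum-stabilityQS}, the existence of a unique positive semidefinite $\Omega_{\bth}$ solving \labelcref{eq-DefOmega} is granted by \cref{prop-Kalmanfilter}. Using $R_{\bth}=S_{\bth}=0$ together with the definition of $K_{\bth}$, a short algebraic manipulation of \labelcref{eq-DefOmega} reduces it to the Joseph-type identity
\begin{equation*}
\Omega_{\bth}=(F_{\bth}-K_{\bth}H_{\bth})\Omega_{\bth}(F_{\bth}-K_{\bth}H_{\bth})^T+Q_{\bth},
\end{equation*}
so that $\Omega_{\bth}\geq Q_{\bth}>0$ in the positive-semidefinite order. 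Since $C_{\bth}$ has full row rank by \cref{assum-smoothparamCT}, this forces $V_{\bth}=C_{\bth}\Omega_{\bth}C_{\bth}^T>0$, completing the verification.
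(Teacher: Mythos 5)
Your proof is correct and takes essentially the same route as the paper's: the same identifications $F_{\bth}=\ee^{A_{\bth}h}$, $Q_{\bth}=\cancel\Sigma^{(h)}_{\bth}$, $S_{\bth}=R_{\bth}=0$, the spectral mapping argument for \cref{assum-stability}\,\labelcref{assum-stabilityF}, \cref{coro-fullrankcov} for \labelcref{assum-stabilityQS}, and \cref{prop-sampledSSM} for \cref{assum-2moments}. The only divergence is at \labelcref{assum-stabilityV}, where the paper merely asserts that $\Omega_{\bth}$ is non\hyp{}singular by \cref{prop-Kalmanfilter}, whereas you justify this explicitly via the Joseph\hyp{}form identity $\Omega_{\bth}=(F_{\bth}-K_{\bth}H_{\bth})\Omega_{\bth}(F_{\bth}-K_{\bth}H_{\bth})^T+Q_{\bth}$, giving $\Omega_{\bth}\geq Q_{\bth}>0$ -- a welcome filling\hyp{}in of a step the paper leaves implicit.
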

\begin{proof}
\Cref{assum-compact} is clear. \Cref{assum-smoothparam} follows from the observation that the functions $A\mapsto \ee^A$ and $(A,B,\Sigma)\mapsto\int_0^h{\ee^{Au}B\Sigma B^T\ee^{A^Tu}}\dd u$ are continuous. By \cref{assum-stabilityCT,assum-compactCT,assum-smoothparamCT}, and the fact that the eigenvalues of a matrix are continuous functions of its entries, it follows that there exists a positive real number $\epsilon$ such that, for each $\bth\in\Theta$, the eigenvalues of $A_{\bth}$ have real parts less than or equal to $-\epsilon$. The observation that the eigenvalues of $\ee^A$ are given by the exponentials of the eigenvalues of $A$ thus shows that \cref{assum-stability}, \labelcref{assum-stabilityF} holds with $\rho\coloneqq\ee^{-\epsilon h}<1$. \Cref{assum-2momentsCT} that the matrices $\SigL_{\bth}$ are non\hyp{}singular and the minimality assumption \labelcref{assum-minimalityCT} imply by \cref{coro-fullrankcov} that the noise covariance matrices $\cancel\Sigma^{(h)}_{\bth}=\E\NN^{(h)}_{\bth,n}\NN^{(h),T}_{\bth,n}$ are non\hyp{}singular, and thus \cref{assum-stability}, \labelcref{assum-stabilityQS} holds. Further, by \cref{prop-Kalmanfilter}, the matrices $\Omega_{\bth}$ are non\hyp{}singular, and so are, because the matrices $C_{\bth}$ are assumed to be of full rank, the matrices $ V_{\bth}$; this means that \cref{assum-stability}, \labelcref{assum-stabilityV} is satisfied. \Cref{assum-2moments} is a consequence of \cref{prop-sampledSSM}, which states that the noise sequences $\NN_{\bth}$ are i.\,i.\,d.\, and in particular ergodic; their second moments are finite because of \cref{assum-2momentsCT}.
\end{proof}


In order to be able to show that the QML estimator $\hat\bth^{L,(h)}$ is asymptotically normally distributed, we impose the following conditions in addition to the ones described so far.

\begin{assumptionparaC}
\label{assum-interiorCT}
The true parameter value $\bth_0$ is an element of the interior of $\Theta$.
\end{assumptionparaC}

\begin{assumptionparaC}
\label{assum-smoothparam3CT}
The functions $\bth\mapsto A_{\bth}$, $\bth\mapsto B_{\bth}$, $\bth\mapsto C_{\bth}$, and $\bth\mapsto \SigL_{\bth}$ are three times continuously differentiable.
\end{assumptionparaC}

\begin{assumptionparaC}
\label{assum-4momentsCT}
There exists a positive number $\delta$ such that $\E\left\|\Lb_{\bth_0}(1)\right\|^{4+\delta}<\infty$. 
\end{assumptionparaC}

\begin{lemma}
\label{lemma-C89D78}
\Cref{assum-interiorCT,assum-smoothparam3CT,assum-4momentsCT} imply that \cref{assum-interior,assum-smoothparam3,assum-4moments} hold for the model \labelcref{eq-sampledSSmfamiliy}.
\end{lemma}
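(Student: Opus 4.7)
The plan is to verify each of the three target assumptions individually for the induced discrete-time state space family $\bigl(\ee^{A_{\bth}h},C_{\bth},\NN^{(h)}_{\bth},\bzero\bigr)_{\bth\in\Theta}$, under which the correspondence with the notation of \cref{section-QMLDTSSM} reads $F_{\bth}=\ee^{A_{\bth}h}$, $H_{\bth}=C_{\bth}$, $\ZZ_{\bth}=\NN^{(h)}_{\bth}$, $Q_{\bth}=\cancel\Sigma^{(h)}_{\bth}$, and $\W_{\bth}\equiv\bzero$, so that also $S_{\bth}=R_{\bth}=0$.

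\Cref{assum-interior} is merely a relabelling of \cref{assum-interiorCT}. For \cref{assum-smoothparam3}, I would first observe that the matrix exponential $A\mapsto \ee^{Ah}$ is real-analytic in the entries of $A$, so that the chain rule together with \cref{assum-smoothparam3CT} implies $\bth\mapsto \ee^{A_{\bth}h}$ is three times continuously differentiable. Likewise, the map $(A,B,\Sigma)\mapsto \int_0^h \ee^{Au}B\Sigma B^T\ee^{A^Tu}\,\dd u$ is of class $C^\infty$ in its matrix arguments, because differentiation under the integral sign is justified by the compactness of $[0,h]$ together with the joint continuity of the integrand and all its partial derivatives of any order. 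Composing with the $C^3$ parametrization from \cref{assum-smoothparam3CT} yields $\bth\mapsto \cancel\Sigma^{(h)}_{\bth}\in C^3$. Smoothness of $\bth\mapsto C_{\bth}$ is immediate from \cref{assum-smoothparam3CT}, and the constant functions $R_{\bth}\equiv 0$, $S_{\bth}\equiv 0$ are trivially smooth.

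For \cref{assum-4moments} it is enough to prove that $\E\|\NN^{(h)}_{\bth_0,1}\|^{4+\delta}<\infty$, since $\W_{\bth_0}$ vanishes identically. Writing $\NN^{(h)}_{\bth_0,1}=\int_0^h f(u)\,\dd\Lb_{\bth_0}(u)$ with the deterministic integrand $f(u)=\ee^{A_{\bth_0}(h-u)}B_{\bth_0}$, note that $f$ is continuous on the compact interval $[0,h]$ and therefore uniformly bounded. I would then decompose $\Lb_{\bth_0}$ into its drift, Brownian, and compensated pure-jump parts via the L\'evy-It\^o decomposition, and bound each contribution in $L^{4+\delta}$ separately: the drift term by a pointwise estimate, the Brownian component by the Burkholder-Davis-Gundy inequality, and the pure-jump component by a Kunita-type moment inequality for stochastic integrals against Poisson random measures. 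In every case one obtains an upper bound proportional to $\|f\|_\infty^{4+\delta}$ times a polynomial in $h$ and in $1+\E\|\Lb_{\bth_0}(1)\|^{4+\delta}$, the latter being finite by \cref{assum-4momentsCT}.

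The main technical obstacle is the $L^{4+\delta}$ estimate for the stochastic integral in the verification of \cref{assum-4moments}; the smoothness and interiority statements reduce to elementary arguments about the analyticity of the matrix exponential and differentiation of parameter-dependent integrals over a compact interval.
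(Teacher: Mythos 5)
Your verification of \cref{assum-interior,assum-smoothparam3} coincides with the paper's: the interiority statement is a relabelling, and the $C^3$ (indeed $C^\infty$) regularity of $A\mapsto\ee^{Ah}$ and of $(A,B,\Sigma)\mapsto\int_0^h\ee^{Au}B\Sigma B^T\ee^{A^Tu}\,\dd u$ composed with the $C^3$ parametrization is exactly the argument given there. Where you genuinely diverge is in the verification of \cref{assum-4moments}. The paper does not touch the pathwise structure of the stochastic integral at all: it invokes \citet[Theorem 2.7]{rajput1989spectral} to identify $\NN^{(h)}_{\bth_0,1}$ as an infinitely divisible random vector whose L\'evy measure is the image of $\nuL\otimes\mathrm{Leb}$ under $(\bx,s)\mapsto\ee^{A_{\bth_0}(h-s)}B_{\bth_0}\bx$, bounds the $(4+\delta)$th moment of that L\'evy measure outside the unit ball by $\int_0^1\bigl\|\ee^{A_{\bth_0}(h-s)}B_{\bth_0}\bigr\|^{4+\delta}\dd s\int_{\left\|\bx\right\|\geq 1}\left\|\bx\right\|^{4+\delta}\nuL(\dd\bx)$, and then applies \citet[Corollary 25.8]{sato1991lpa} twice --- once to translate \cref{assum-4momentsCT} into finiteness of $\int_{\left\|\bx\right\|\geq 1}\left\|\bx\right\|^{4+\delta}\nuL(\dd\bx)$ and once to go back from the L\'evy measure of $\NN^{(h)}_{\bth_0,1}$ to its moments. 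Your route via the L\'evy--It\^o decomposition, Burkholder--Davis--Gundy for the Gaussian part and a Kunita-type maximal inequality for the compensated jump integral is also valid (note that under \cref{assum-2momentsCT} the small and large jumps can be compensated jointly, and that Kunita's inequality at exponent $4+\delta$ still requires you to pass through the same Sato equivalence to turn $\E\left\|\Lb_{\bth_0}(1)\right\|^{4+\delta}<\infty$ into $\int_{\left\|\bx\right\|\geq 1}\left\|\bx\right\|^{4+\delta}\nuL(\dd\bx)<\infty$, so that step is not avoided, merely relocated). What your approach buys is independence from the Rajput--Rosi\'nski identification of the characteristic triplet and a bound that is explicit in $h$ and $\|f\|_\infty$; what it costs is the heavier machinery of moment inequalities for semimartingale integrals where the paper needs only a static statement about infinitely divisible laws.
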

\begin{proof}
\Cref{assum-interior} is clear. \Cref{assum-smoothparam3} follows from the fact that the functions $A\mapsto \ee^A$ and $(A,B,\Sigma)\mapsto\int_0^h{\ee^{Au}B\Sigma B^T\ee^{A^Tu}}\dd u$ are not only continuous, but infinitely often differentiable. For \cref{assum-4moments} we need to show that the random variables $\NN\coloneqq\NN_{\bth_0,1}$ have bounded $(4+\delta)$th absolute moments. It follows from \citet[Theorem 2.7]{rajput1989spectral} that $\NN$ is infinitely divisible with characteristic triplet $(\bgamma,\Sigma,\nu)$, and that
\begin{equation*}
\int_{\left\|\bx\right\|\geq 1}{\left\|\bx\right\|^{4+\delta}\nu(\dd\bx)} \leq \int_0^1\left\|\ee^{A_{\bth_0}(h-s)}B_{\bth}\right\|^{4+\delta}\dd s\int_{\left\|\bx\right\|\geq 1}{\left\|\bx\right\|^{4+\delta}\nu^{\Lb_{\bth_0}}{\bth}(\dd\bx)}.
\end{equation*}
The first factor on the right side is finite by \cref{assum-compactCT,assum-smoothparam3CT}, the second by \cref{assum-4momentsCT} and the equivalence of finiteness of the $\alpha$th absolute moment of an infinitely divisible distribution and finiteness of the $\alpha$th absolute moments of the corresponding L\'evy measure restricted to the exterior of the unit ball \citep[Corollary 25.8]{sato1991lpa}. The same corollary shows that $\E\left\|\NN\right\|^{4+\delta}<\infty$ and thus \cref{assum-4moments}.
\end{proof}
Our final assumption is the analogue of \cref{assum-identifiabilityFisher}. It will ensure that the Fisher information matrix of the QML estimator $\hat\bth^{L,(h)}$ is non\hyp{}singular by imposing a non\hyp{}degeneracy condition on the parametrization of the model.

\begin{assumptionparaC}
\label{assum-identifiabilityFisherCT}
There exists a positive index $j_0$ such that the $\left[(j_0+2)d^2\right]\times r$ matrix
\begin{equation*}
\nabla_{\bth}\left(\begin{array}{c}\left[\I_{j_0+1}\otimes K^{(h),T}_{\bth}\otimes C_{\bth}\right]\left[\begin{array}{cccc}\left(\vec \ee^{\I_N h}\right)^T& \left(\vec \ee^{A_{\bth}h}\right)^T & \cdots & \left(\vec \ee^{A_{\bth}^{j_0}h}\right)^T\end{array}\right]^T\\\vec V_{\bth}\end{array}\right)_{\bth=\bth_0}
\end{equation*}
has rank $r$.
\end{assumptionparaC}

\begin{theorem}[Consistency and asymptotic normality of $\hat\bth^{L,(h)}$]
\label{theorem-CLTmcarma}
Assume that $\left(A_{\bth},B_{\bth},C_{\bth},\Lb_{\bth}\right)_{\bth\in\Theta}$ is a parametric family of continuous\hyp{}time state space models, and denote by $\y^{L,(h)}=(\Y^{(h)}_{\bth_0.1},\ldots,\Y^{(h)}_{\bth_0.L})$ a sample of length $L$ from the discretely observed output process corresponding to the parameter value $\bth_0\in\Theta$. Under \cref{assum-compactCT,assum-smoothparamCT,assum-stabilityCT,assum-2momentsCT,assum-minimalityCT,assum-identifiabilityCT,assum-spectrumCT} the QML estimator $\hat\bth^{L,(h)}=\argmin_{\bth\in\Theta}\widehat{\mathscr{L}}(\bth,\y^{L,(h)})$ is strongly consistent, i.\,e.\
\begin{equation}
\hat\bth^{L,(h)}\xrightarrow[L\to\infty]{\text{a.\,s.}}\bth_0.
\end{equation}
If, moreover, \cref{assum-interiorCT,assum-smoothparam3CT,assum-4momentsCT,assum-identifiabilityFisherCT} hold, then $\hat\bth^{L,{(h)}}$ is asymptotically normally distributed, i.\,e.\
\begin{equation}
\label{eq-CLTmcarma}
\sqrt{L}\left(\hat\bth^{L,(h)}-\bth_0\right)\xrightarrow[L\to\infty]{d}\mathscr{N}(\bzero,\Xi),
\end{equation}
where the asymptotic covariance matrix $\Xi=J^{-1}IJ^{-1}$ is given by
\begin{align}
I =& \lim_{L\to\infty}L^{-1}\Var\left(\nabla_{\bth}\mathscr{L}\left(\bth_0,\y^L\right)\right),\quad J = \lim_{L\to\infty}L^{-1}\nabla^2_{\bth}\mathscr{L}\left(\bth_0,\y^L\right).
\end{align}
\end{theorem}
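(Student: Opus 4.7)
The plan is to deduce the theorem from the two general results for discrete-time state space models, namely \cref{theorem-consistency,theorem-hatbthLCLT}, applied to the sampled family \labelcref{eq-sampledSSmfamiliy}. The two lemmas \cref{lemma-C47D14,lemma-C89D78} already translate most of the analytic continuous-time hypotheses into their discrete-time analogues, so the core of the proof is to verify the remaining identifiability and mixing hypotheses \cref{assum-identifiability1,assum-mixing,assum-identifiabilityFisher} for the discrete-time model that results from sampling.

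For strong consistency, the first task is to check \cref{assum-identifiability1} for the sampled system. By \cref{lemma-identifiabilityspectralDT}, it suffices to show that $f_{\Y_{\bth}}^{(h)}\neq f_{\Y_{\bth_0}}^{(h)}$ somewhere on $[-\pi,\pi]$ whenever $\bth\neq\bth_0$. This is precisely the statement that the sampled family $K(\Theta)$ is $h$-identifiable from the spectral density, which follows from \cref{discident} under \cref{assum-2momentsCT,assum-stabilityCT,assum-minimalityCT,assum-identifiabilityCT,assum-spectrumCT}. Combined with \cref{lemma-C47D14}, every hypothesis of \cref{theorem-consistency} is in force, and strong consistency $\hat\bth^{L,(h)}\to\bth_0$ almost surely follows.

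For asymptotic normality, besides the additional smoothness and moment conditions handled by \cref{lemma-C89D78}, two points need attention. First, the strong-mixing condition \cref{assum-mixing} on the sampled output process. This is supplied by the fact, recalled at the start of \cref{section-QMLMCARMA} and proved in \citet[Proposition~3.34]{marquardt2007multivariate}, that a stationary MCARMA process with finite second moments is exponentially strongly mixing; exponential decay clearly yields the summability $\sum_m\alpha_{\Y}(m)^{\delta/(2+\delta)}<\infty$ required by \cref{assum-mixing} with the same $\delta>0$ as in \cref{assum-4momentsCT}. Second, \cref{assum-identifiabilityFisher} for the sampled model is exactly \cref{assum-identifiabilityFisherCT} after substituting $F_{\bth}=\ee^{A_{\bth}h}$, $H_{\bth}=C_{\bth}$, $K_{\bth}=K^{(h)}_{\bth}$ and $V_{\bth}=V^{(h)}_{\bth}$ into the definition of $\psi_{\bth,j}$ in \cref{eq-DefPsij}. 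With all hypotheses \cref{assum-compact,assum-smoothparam,assum-smoothparam3,assum-stability,assum-2moments,assum-4moments,assum-interior,assum-mixing,assum-identifiability1,assum-identifiabilityFisher} thus verified for the sampled family, \cref{theorem-hatbthLCLT} yields \cref{eq-CLTmcarma} with the asserted covariance matrix $\Xi=J^{-1}IJ^{-1}$.

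The main obstacle — already carried out earlier in the paper — is the translation of the \emph{continuous-time} identifiability hypothesis \cref{assum-identifiabilityCT} to an identifiability statement for the \emph{discrete-time, $h$-sampled} spectral density, where the aliasing effect could a priori destroy identifiability. This is exactly where \cref{assum-spectrumCT} and the Kalman--Bertram argument of \cref{kalBerThm}, together with \cref{princlog,injectivity}, are needed: they rule out the appearance of spurious observationally equivalent parameters caused by sampling, which is what \cref{discident} ensures. Once this link is in place, the present proof is essentially an orchestration of the previously established lemmas, with the mixing property of MCARMA processes being the remaining non-trivial input needed to invoke \cref{theorem-hatbthLCLT}.
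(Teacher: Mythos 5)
Your proposal is correct and follows essentially the same route as the paper's own proof: both reduce the theorem to \cref{theorem-consistency,theorem-hatbthLCLT} via the sampled family \labelcref{eq-sampledSSmfamiliy}, invoke \cref{lemma-C47D14,lemma-C89D78} for the routine hypotheses, obtain \cref{assum-identifiability1} from \cref{lemma-identifiabilityspectralDT} together with \cref{discident}, read off \cref{assum-identifiabilityFisher} as a reformulation of \cref{assum-identifiabilityFisherCT}, and supply \cref{assum-mixing} from the exponential strong mixing of MCARMA processes in \citet[Proposition~3.34]{marquardt2007multivariate}. No gaps.
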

\begin{proof}
Strong consistency of $\hat\bth^{L,(h)}$ is a consequence of \cref{theorem-consistency} if we can show that the parametric family $\left(\ee^{A_{\bth}h},C_{\bth},\NN_{\bth},\bzero\right)_{\bth\in\Theta}$ of discrete\hyp{}time state space models satisfies \cref{assum-compact,assum-smoothparam,assum-stability,assum-2moments,assum-identifiability1}. The first four of these are shown to hold in \cref{lemma-C47D14}. For the last one, we observe that, by \cref{lemma-identifiabilityspectralDT}, \cref{assum-identifiability1} is equivalent to the family of state space models \labelcref{eq-sampledSSmfamiliy} being identifiable from the spectral density. Under \cref{assum-minimalityCT,assum-identifiabilityCT,assum-spectrumCT} this is guaranteed by \cref{discident}.

In order to prove \cref{eq-CLTmcarma}, we shall apply \cref{theorem-hatbthLCLT} and therefore need to verify \cref{assum-interior,assum-smoothparam3,assum-4moments,assum-mixing,assum-identifiabilityFisher} for the state space models $\left(\ee^{A_{\bth}h},C_{\bth},\NN_{\bth},\bzero\right)_{\bth\in\Theta}$. The first three hold by \cref{lemma-C89D78}, the last one as a reformulation of \cref{assum-identifiabilityFisherCT}. \Cref{assum-mixing}, that the strong mixing coefficients $\alpha$ of a sampled multivariate CARMA process satisfy $\sum_{m}[\alpha(m)]^{\delta/(2+\delta)}<\infty$, follows from \cref{assum-2momentsCT} and \citet[Proposition 3.34]{marquardt2007multivariate}, where it was shown that MCARMA processes with a finite logarithmic moment are exponentially strongly mixing.
\end{proof}

\section{Practical applicability}
\label{section-practicalapplicability}
In this section we complement the theoretical results from \cref{section-QMLDTSSM,section-QMLMCARMA} by commenting on their applicability in practical situations. Canonical parametrizations are a classical subject of research about discrete\hyp{}time dynamical systems, and most of the results apply also to the continuous\hyp{}time case; without going into detail we present the basic notions and results about these parametrizations. The assertions of \cref{theorem-CLTmcarma} are confirmed by a simulation study for a bivariate non\hyp{}Gaussian CARMA process. Finally, we estimate the parameters of a CARMA model for a bivariate time series from economics using our QML approach.
\subsection{Canonical parametrizations}
\label{section-Canonical}
We present parametrizations of multivariate CARMA processes that satisfy the identifiability conditions \labelcref{assum-minimalityCT,assum-identifiabilityCT}, as well as the smoothness conditions \labelcref{assum-smoothparamCT,assum-smoothparam3CT}; if, in addition, the parameter space $\Theta$ is restricted so that \cref{assum-stabilityCT,assum-compactCT,assum-interiorCT,assum-spectrumCT} hold, and the driving L\'evy process satisfies \cref{assum-2momentsCT}, the canonically parametrized MCARMA model can be estimated consistently. In order for this estimate to be asymptotically normally distributed, one must additionally impose \cref{assum-4momentsCT} on the L\'evy process and check that \cref{assum-identifiabilityFisherCT} holds -- a condition which we are unable to verify analytically for the general model; for explicit parametrizations, however, it can be checked numerically with moderate computational effort. The parametrizations are well\hyp{}known from the discrete\hyp{}time setting; detailed descriptions with proofs can be found in \citet{hannan1987stl}
or, from a slightly different perspective, in the control theory literature \citep[][and references therein]{gevers1986arma}.
We begin with a canonical decomposition for rational matrix functions.
\begin{theorem}[{{\citet[Theorem 4.7.5]{bernstein2005matrix}}}]
\label{thmSmith}
Let $H\in M_{d,m}(\R\{z\})$ be a rational matrix function of rank $r$. There exist matrices $S_1\in M_d(\R[z])$ and $S_2\in M_m(\R[z])$ with constant determinant, such that $H=S_1 M S_2$, where
\begin{equation}
M = \left[\begin{array}{cc}\operatorname{diag}\left\{\epsilon_i/\psi_i\right\}_{i=1}^r & 0_{r,m-r}\\0_{d-r,r} & 0_{d-r,m-r}\end{array}\right]\in M_{d,m}(\R\{z\}),
\end{equation}
and $\epsilon_1,\ldots\epsilon_r$, $\psi_1,\ldots,\psi_r\in\R[z]$ are monic polynomials uniquely determined by $H$ satisfying the following conditions: for each $i=1,\ldots,r$, the polynomials $\epsilon_i$ and $\psi_i$ have no common roots, and for each $i=1,\ldots,r-1$, the polynomial $\epsilon_i$ ($\psi_{i+1}$) divides the polynomial $\epsilon_{i+1}$ ($\psi_i$). The triple $(S_1,M,S_2)$ is called the Smith--McMillan decomposition of $H$.
\end{theorem}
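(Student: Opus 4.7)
The plan is to reduce the Smith--McMillan decomposition for rational matrix functions to the classical Smith normal form for polynomial matrices over the principal ideal domain $\R[z]$. First I would clear denominators: letting $d\in\R[z]$ be the monic least common multiple of all denominators that appear among the entries of $H$, the product $dH$ lies in $M_{d,m}(\R[z])$ and has the same rank $r$ as $H$.

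Next I would invoke the Smith normal form theorem for polynomial matrices: since $\R[z]$ is a principal ideal domain, there exist unimodular matrices $U\in M_d(\R[z])$ and $V\in M_m(\R[z])$, meaning $\det U,\det V\in\R\setminus\{0\}$, such that
\begin{equation*}
U(dH)V \;=\; \operatorname{diag}(\alpha_1,\ldots,\alpha_r,0,\ldots,0), \qquad \alpha_1\mid\alpha_2\mid\cdots\mid\alpha_r.
\end{equation*}
Taking $S_1 = U^{-1}$ and $S_2 = V^{-1}$, which are themselves polynomial with constant determinant, one obtains $H = S_1 \operatorname{diag}(\alpha_1/d,\ldots,\alpha_r/d,0,\ldots) S_2$. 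For each index $i$, I would then reduce $\alpha_i/d$ to lowest terms $\epsilon_i/\psi_i$ by cancelling $\gcd(\alpha_i,d)$ and selecting $\epsilon_i$, $\psi_i$ to be monic; coprimality of $\epsilon_i$ and $\psi_i$ is then built in by construction.

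It remains to verify the two divisibility chains and the uniqueness assertion. The relation $\epsilon_i\mid\epsilon_{i+1}$ follows from $\alpha_i\mid\alpha_{i+1}$ together with a factor-by-factor analysis of which irreducible polynomials survive the lowest-terms cancellation. The reversed relation $\psi_{i+1}\mid\psi_i$ stems from the observation that an irreducible factor of $d$ is cancelled against $\alpha_i$ only if it divides $\alpha_i$; since $\alpha_i\mid\alpha_{i+1}$, any factor of $d$ that is cancelled at stage $i+1$ is already cancelled at stage $i$, so the denominator can only shrink as $i$ increases. Uniqueness of the $\alpha_i$ (up to units, and hence of the monic $\epsilon_i$ and $\psi_i$) is obtained via the classical identity $\alpha_1\cdots\alpha_k = D_k/D_{k-1}$, where $D_k\in\R[z]$ denotes the monic greatest common divisor of all $k\times k$ minors of $dH$; since $D_k$ is invariant under left and right multiplication by unimodular matrices, it is determined by $H$ alone.

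The main technical input is the existence of the Smith normal form itself, which is obtained by an iterative Euclidean-algorithm argument: select a nonzero entry of minimal degree, use elementary row and column operations in $\R[z]$ to render every other entry in its row and column a multiple of it, zero those entries out, and iterate on the complementary submatrix. I expect the main obstacle to be purely combinatorial book-keeping, namely tracking the coprimality and reversed-divisibility patterns of $\epsilon_i$ and $\psi_i$ through the denominator-clearing step; no substantially new analytic difficulty arises beyond the Smith normal form itself.
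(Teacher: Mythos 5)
Your argument is correct and is the standard route to the Smith--McMillan form; note, however, that the paper does not prove this statement at all --- it is imported verbatim from \citet[Theorem 4.7.5]{bernstein2005matrix} --- so there is no in-paper proof to compare against. Your reduction (clear denominators by the monic least common multiple $d$, apply the Smith normal form over the Euclidean domain $\R[z]$ to $dH$, then cancel $\gcd(\alpha_i,d)$ to reach lowest terms) is exactly the classical construction, and your valuation-by-valuation argument for the two divisibility chains is sound: for an irreducible $p$ one has $v_p(\epsilon_i)=\max\{0,v_p(\alpha_i)-v_p(d)\}$ and $v_p(\psi_i)=\max\{0,v_p(d)-v_p(\alpha_i)\}$, so monotonicity of $v_p(\alpha_i)$ in $i$ gives $\epsilon_i\mid\epsilon_{i+1}$ and $\psi_{i+1}\mid\psi_i$. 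The one place you should tighten the write-up is uniqueness: the determinantal-divisor identity $\alpha_1\cdots\alpha_k=D_k/D_{k-1}$ pins down the invariant factors of $dH$, hence the $\epsilon_i,\psi_i$ you \emph{constructed}, but the theorem asserts that \emph{every} decomposition $H=S_1MS_2$ of the stated form yields the same $\epsilon_i/\psi_i$. To close this, observe that for any such decomposition $dM=S_1^{-1}(dH)S_2^{-1}$ is a polynomial diagonal matrix whose entries $d\epsilon_i/\psi_i$ satisfy the divisibility chain (since $(\epsilon_{i+1}/\epsilon_i)(\psi_i/\psi_{i+1})$ is a polynomial), so $dM$ is itself a Smith form of $dH$ and uniqueness of the Smith form forces $d\epsilon_i/\psi_i=\alpha_i$ up to units; monicity and coprimality then determine $\epsilon_i$ and $\psi_i$. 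One should also remark that the end result does not depend on the choice of common denominator, since replacing $d$ by $cd$ multiplies every $\alpha_i$ by $c$ and leaves the reduced fractions unchanged. These are routine completions; the substance of your proof is right.
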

The degrees $\nu_i$ of the denominator polynomials $\psi_i$ in the Smith--McMillan decomposition of a rational matrix function $H$ are called the  Kronecker indices of $H$, and they define the vector $\bnu=(\nu_1,\ldots,\nu_d)\in\N^d$, where we set $\nu_k=0$ for $k=r+1,\ldots,d$. They satisfy the important relation $\sum_{i=1}^d{\nu_i} = \delta_M(H)$, where $\delta_M(H)$ denotes the McMillan degree of $H$, i.\,e.\ the smallest possible dimension of an algebraic realization of $H$, see \cref{def-minreal}. For $1\leq i,j\leq d$, we also define the integers $\nu_{ij}=\min\{\nu_i+I_{\{i>j\}},\nu_j\}$, and if the Kronecker indices of the transfer function of an MCARMA process $\Y$ are $\bnu$, we call $\Y$ an $\operatorname{MCARMA}_{\bnu}$ process.
\begin{theorem}[{Echelon state space realization, \citet[Section 3]{guidorzi1975canonical}}]
\label{EchelonSSM}
For natural numbers $d$ and $m$, let $H\in M_{d,m}(\R\{z\})$ be a rational matrix function with Kronecker indices $\bnu=(\nu_1,\ldots,\nu_d)$. Then a unique minimal algebraic realization $(A,B,C)$ of $H$ of dimension $N=\delta_M(H)$ is given by the following structure.
\begin{enumerate}[(i)]
 \item The matrix $A=(A_{ij})_{i,j=1,\ldots,d}\in M_N(\R)$ is a block matrix with blocks $A_{ij}\in M_{\nu_i,\nu_j}(\R)$ given by
\begin{subequations}
\label[pluralequation]{canSSMpara}
\begin{equation}
\label{canSSMparaA}
 A_{ij} =\left(\begin{array}{cccccc}
                0 & \cdots & \cdots & \cdots & \cdots & 0\\
		\vdots & & & & & \vdots \\
		 0 & \cdots & \cdots & \cdots & \cdots & 0\\
		\alpha_{ij,1} & \cdots & \alpha_{ij,\nu_{ij}} & 0 & \cdots & 0
               \end{array}\right) + \delta_{i,j}\left(\begin{array}{ccc}
          0 &  \multicolumn{2}{c}{\multirow{3}{*}{$\I_{\nu_i-1}$}} \\
	  \vdots & &\\
	  0 & &\\
	  0 &\cdots & 0
         \end{array}\right),
 \end{equation}
\item $B=(b_{ij})\in M_{N,m}(\R)$ unrestricted,
\item if $\nu_i>0$, $i=1,\ldots,d$, then
\begin{equation}
C=\left(\begin{array}{cccccccccccccccc}
         1 & 0 & \ldots & 0 					&\vdots& 0 & 0 & \ldots & 0 &\vdots & & \vdots&  \multicolumn{4}{c}{\multirow{2}{*}{$0_{(d-1),\nu_d}$}}	\\
	 \multicolumn{4}{c}{\multirow{2}{*}{$0_{(d-1),\nu_1}$}}&\vdots 	& 1 & 0 & \ldots & 0&\vdots & &\vdots& &&&			\\
	 &&&							&\vdots& \multicolumn{4}{c}{0_{(d-2),\nu_2}}&\vdots & &\vdots& 1 & 0 &\ldots & 0
        \end{array}
\right).
\end{equation}
\end{subequations}
\end{enumerate}
\end{theorem}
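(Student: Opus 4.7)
The plan is to derive the echelon canonical form by starting from an arbitrary minimal realization and then pinning down a distinguished similarity transformation via the observability structure. By the standard theory of minimal realizations (which is already implicit in Theorems \ref{theominimality} and in the uniqueness-up-to-similarity statement of \citet[Theorem 2.3.4]{hannan1987stl}), there exists at least one minimal algebraic realization $(A',B',C')$ of $H$ of dimension $N=\delta_M(H)$, and any two such triples are related by $(A,B,C)=(TA'T^{-1},TB',C'T^{-1})$ for a unique nonsingular $T\in M_N(\R)$. So the task reduces to exhibiting a canonical choice of $T$ and verifying that it produces the announced sparsity pattern.

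First, I would introduce the observability indices. Writing $C'_i$ for the $i$th row of $C'$, consider the infinite list of row vectors
\begin{equation*}
C'_1,C'_2,\ldots,C'_d,\;C'_1A',C'_2A',\ldots,C'_dA',\;C'_1(A')^2,\ldots
\end{equation*}
and select, scanning in this row-major order, each vector that is linearly independent from those previously retained. By observability the span of the retained vectors is all of $\R^N$, and for each output $i$ one retains exactly $\mu_i$ vectors of the form $C'_i(A')^k$, $0\leq k<\mu_i$ (Popov's completion lemma: once $C'_i(A')^{\mu_i}$ lies in the span, so does $C'_i(A')^{\mu_i+j}$ for every $j\geq 0$). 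I would then show that the multi-index $(\mu_1,\ldots,\mu_d)$ coincides with the Kronecker indices $(\nu_1,\ldots,\nu_d)$ of \cref{thmSmith}: both satisfy $\sum_i\mu_i=\sum_i\nu_i=N$, and matching them row by row follows from the Smith--McMillan decomposition applied to $(zI-A')^{-1}$ combined with the fact that the sequence of ranks of the truncated observability matrices is an invariant of $H$. This is the first delicate step.

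Next, I would define $T$ as the matrix whose rows are precisely the retained $N$ vectors, ordered as $C'_1,C'_1A',\ldots,C'_1(A')^{\nu_1-1},C'_2,\ldots,C'_d(A')^{\nu_d-1}$, and transform the realization. By construction $C=C'T^{-1}$ automatically has ones in the positions prescribed in \labelcref{canSSMparaA} and zeros elsewhere, giving statement (iii). For $A=TA'T^{-1}$, I would compute each row: the row associated to $C'_i(A')^k$ for $k<\nu_i-1$ equals the row associated to $C'_i(A')^{k+1}$ in the new basis, which produces the shift blocks $\mathrm{I}_{\nu_i-1}$ on the superdiagonal of $A_{ii}$ and zero rows in the upper part of $A_{ij}$ for $i\neq j$. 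The row for $C'_i(A')^{\nu_i-1}$ is the only one that can contain free parameters: expressing $C'_i(A')^{\nu_i}$ as a linear combination of the basis vectors yields the coefficients $\alpha_{ij,\ell}$, with the upper summation bound $\nu_{ij}=\min\{\nu_i+I_{\{i>j\}},\nu_j\}$ forced by the row-major selection rule (rows beyond that index were never retained, so their coefficients are pinned to zero). The matrix $B=TB'$ carries no structure and remains arbitrary, giving (ii). Uniqueness is immediate: the Kronecker indices are an invariant of $H$, and the canonical form rigidly fixes $T$ as the change of basis from the retained observability rows to the standard basis of $\R^N$, leaving no residual freedom in $(A,B,C)$.

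The main obstacle will be the identification of the observability indices with the Kronecker indices read from the Smith--McMillan form, together with the bookkeeping needed to justify the precise bound $\nu_{ij}$ in \labelcref{canSSMparaA}; once these combinatorial facts are in place, the rest of the proof is mechanical linear algebra. Since the statement is attributed to \citet{guidorzi1975canonical}, I would in practice defer the detailed bookkeeping to that reference and present only the above outline.
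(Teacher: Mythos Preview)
The paper does not prove this theorem at all: it is stated as a quoted result from \citet[Section 3]{guidorzi1975canonical} with no accompanying argument, so there is no ``paper's own proof'' to compare against. Your outline is the standard derivation of the echelon canonical form via the observability structure (select a basis from the rows $C'_i(A')^k$ in row-major order, transform, and read off the companion-like block pattern), which is precisely the route taken in Guidorzi's original paper; in that sense your proposal is faithful to the cited source even though it goes beyond what the present paper actually does.

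One minor caution on the bookkeeping you flag yourself: the identification of the row-major observability indices with the Kronecker indices from the Smith--McMillan decomposition is not entirely trivial and in some treatments (e.g.\ \citet{hannan1987stl}) is handled by first passing through a left matrix fraction description rather than directly through $(zI-A')^{-1}$; if you intend to write out that step rather than defer it, the matrix-fraction route is usually cleaner. But since you already plan to cite Guidorzi for the detailed combinatorics, the sketch as written is adequate.
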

If $\nu_i=0$, the elements of the $i$th row of $C$ are also freely varying, but we concentrate here on the case where all Kronecker indices $\nu_i$ are positive. To compute $\bnu$ as well as the coefficients $\alpha_{ij,k}$ and $b_{ij}$ for a given rational matrix function $H$, several numerically stable and efficient algorithms are available in the literature \citep[see, e.\,g.,][and the references therein]{rozsa1975minimal}. The orthogonal invariance inherent in spectral factorization (see \cref{specfac}) implies that this parametrization alone does not ensure identifiability. One remedy is to restrict the parametrization to transfer functions $H$ satisfying $H(0)=H_0$, for a non\hyp{}singular matrix $H_0$. To see how one must constrain the parameters $\alpha_{ij,k},b_{ij}$ in order to ensure this normalization, we work in terms of left matrix fraction descriptions.
\begin{theorem}[{Echelon MCARMA realization, \citet[Section 3]{guidorzi1975canonical}}]
\label{EchelonMCARMA}
For positive integers $d$ and $m$, let $H\in M_{d,m}(\R\{z\})$ be a rational matrix function with Kronecker indices $\bnu=(\nu_1,\ldots,\nu_d)$. Assume that $(A,B,C)$ is a realization of $H$, parametrized as in \cref{canSSMpara}. Then a unique left matrix fraction description $P^{-1}Q$ of $H$ is given by $ P(z)=\left[p_{ij}(z)\right]$, $Q(z)=\left[q_{ij}(z)\right]$, where
\begin{equation}
p_{ij}(z)=\delta_{i,j}z^{\nu_i}-\sum_{k=1}^{\nu_{ij}}{\alpha_{ij,k}z^{k-1}},\quad q_{ij}(z)=\sum_{k=1}^{\nu_i}{\kappa_{\nu_1+\ldots+\nu_{i-1}+k,j}z^{k-1}},
\end{equation}
and the coefficient $\kappa_{i,j}$ is the $(i,j)$th entry of the matrix $K=TB$, where the matrix $T=(T_{ij})_{i,j=1,\ldots,d}\in M_N(\R)$ is a block matrix with blocks $T_{ij}\in M_{\nu_i,\nu_j}(\R)$ given by
\begin{equation}
T_{ij} = \left(\begin{array}{cccccc}
                 -\alpha_{ij,2}		&\ldots	&-\alpha_{ij,\nu_{ij}}	&0	&\ldots		&0	\\
		 \vdots			&\iddots	&			&	&		&\vdots	\\
		 -\alpha_{ij,\nu_{ij}}	&	&			&	&		&\vdots	\\
		 0			&	&			&	&		&\vdots	\\
		 \vdots			&	&			&	&		&\vdots	\\
		 0			&\ldots	&	\ldots		&\ldots	&\ldots		&0
                \end{array}
\right)+\delta_{i,j}\left(\begin{array}{cccccc}
                 0	&0	&\ldots	&\ldots	&0	& 1 	\\
		 0	&0	&\ldots	&	&1	& 0	\\
		 \vdots	&\vdots	&	&\iddots&	& \vdots\\
		 \vdots	&	&\iddots&	& \vdots& \vdots\\
		 0	&1	&	&\ldots	& 0	& 0	\\
		 1	&0	&\ldots	&\ldots	& 0	& 0
                \end{array}
\right).
\end{equation}
\end{theorem}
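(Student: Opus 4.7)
The plan is to work in the $z$-domain and translate the Echelon state-space realization directly into a left matrix fraction description. Taking $z$-transforms of the equations $(z\I_N - A)X(z) = B U(z)$ and $Y(z) = CX(z)$, I would index the state components as $x_{i,k}$ for $i=1,\ldots,d$ and $k=1,\ldots,\nu_i$, mirroring the block decomposition inherited from \cref{canSSMpara}. Because the observation matrix $C$ from \cref{canSSMpara} picks out $x_{i,1}$ in each block, we have $y_i(z) = x_{i,1}(z)$. The first $\nu_i - 1$ rows of the $i$-th block of $(z\I_N-A)X = BU$ read
\begin{equation*}
z\, x_{i,k}(z) = x_{i,k+1}(z) + (BU)_{i,k}(z), \quad k=1,\ldots,\nu_i - 1,
\end{equation*}
which can be iterated upward to give the explicit formula
\begin{equation*}
x_{i,k}(z) = z^{k-1} y_i(z) - \sum_{l=1}^{k-1} z^{k-1-l} (BU)_{i,l}(z).
\end{equation*}

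Next I would substitute this into the bottom row of the $i$-th block, namely $z\, x_{i,\nu_i} = \sum_{j,k} \alpha_{ij,k} x_{j,k} + (BU)_{i,\nu_i}$. Collecting the $y$-dependent contributions on the left and the $BU$-dependent contributions on the right, the identity
\begin{equation*}
\sum_{j=1}^{d} \bigl[\delta_{i,j} z^{\nu_i} - \sum_{k=1}^{\nu_{ij}} \alpha_{ij,k} z^{k-1}\bigr] y_j(z) = \text{(polynomial in $z$ with $B$-row coefficients)}\cdot u(z)
\end{equation*}
emerges. The left-hand side matches the $i$-th row of $P(z) y(z)$ as stated. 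The critical bookkeeping point is that the summation index $\nu_{ij} = \min(\nu_i + I_{\{i>j\}}, \nu_j)$ exactly captures the admissible range of $k$: it is bounded above by $\nu_j$ because the $j$-th block only contains $\nu_j$ state components, and the shift $I_{\{i>j\}}$ arises from the structural asymmetry in the specification of $A_{ij}$ in \cref{canSSMpara}.

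For the right-hand side I would group three kinds of $BU$-contributions: the direct term $(BU)_{i,\nu_i}$ with coefficient $z^0$, the terms $z^{\nu_i - l}(BU)_{i,l}$ for $l=1,\ldots,\nu_i - 1$ from the identity-cascade within block $i$, and the terms $-\alpha_{ij,k} z^{k-1-l}(BU)_{j,l}$ arising when the cascade is applied to the off-diagonal $\alpha$-contributions. Rearranging these as a polynomial in $z$ whose coefficients are linear combinations of the rows of $B$ reveals that the $i$-th row of $Q(z) u(z)$ takes precisely the form $\sum_{k=1}^{\nu_i}\kappa_{\nu_1+\cdots+\nu_{i-1}+k,\cdot}\, z^{k-1}$, with $K = TB$ and $T$ a $d\times d$ block matrix whose structure is read off from the coefficients of this rearrangement. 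The anti-diagonal of ones inside the diagonal block $T_{ii}$ accounts for the identity cascade within block $i$, while the off-diagonal blocks $T_{ij}$ encode the $\alpha$-cascading, with the triangular zero pattern reflecting the fact that $\alpha_{ij,k}$ does not occur for $k > \nu_{ij}$.

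Uniqueness of the LMFD with this structure follows from row-reducedness of $P$: any two LMFDs of $H$ with the given Kronecker row degrees differ by a unimodular polynomial matrix acting from the left, and the Echelon normalization (that the $i$-th row of $P$ has degree exactly $\nu_i$ and the leading column-coefficient matrix is the identity) pins this unimodular factor down to the identity. The main obstacle is the combinatorial verification that the blocks $T_{ij}$ take the claimed triangular form with the correct anti-diagonal structure; this requires careful tracking of how the upper limit of the $k$-summation in the $\alpha$-contributions interacts with the cascading across blocks of possibly different sizes $\nu_j$, which is exactly what determines whether an exponent $k-1-l$ generated on the right-hand side falls inside the range $\{0,\ldots,\nu_i-1\}$ and thus contributes to $q_{ij}(z)$.
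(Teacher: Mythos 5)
The paper offers no proof of this theorem at all: it is imported verbatim from \citet[Section~3]{guidorzi1975canonical}, so there is nothing internal to compare your argument against. On its own merits, your derivation is essentially the standard one and the computation checks out: the cascade $x_{i,k}(z)=z^{k-1}y_i(z)-\sum_{l=1}^{k-1}z^{k-1-l}(BU)_{i,l}(z)$ is correct given the structure of \cref{canSSMparaA}, substituting it into the bottom row of block $i$ reproduces $p_{ij}$ exactly, and collecting the $BU$-terms by powers of $z$ yields coefficient $(B)_{i,\nu_i-k+1}-\sum_{l=1}^{\nu_{ij}-k}\alpha_{ij,k+l}(B)_{j,l}$ for $z^{k-1}$, which is precisely row $k$ of $(TB)$ with the anti-diagonal block and the anti-triangular $-\alpha_{ij,k+l}$ pattern displayed in the theorem. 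Two points in the uniqueness step deserve tightening. First, the statement that any two LMFDs of $H$ differ by a left unimodular factor requires left coprimeness of $(P,Q)$, which you should derive from the observability (hence minimality) of the echelon realization rather than assume. Second, your normalization claim is slightly off: the leading row-coefficient matrix of $P$ is not the identity in general, since for $i>j$ with $\nu_j>\nu_i$ the index $\nu_{ij}=\nu_i+1$ permits a degree-$\nu_i$ term in $p_{ij}$; the matrix is only unipotent lower-triangular. It is the full echelon degree constraints (which coefficients of each $p_{ij}$ are free and which are forced to vanish), not just the row degrees and a leading identity, that pin the unimodular factor to $\I_d$. With those two repairs the argument is complete and matches the construction in Guidorzi's paper.
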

The orders $p,q$ of the polynomials $P,Q$ satisfy $p=\max\{\nu_1,\ldots,\nu_d\}$ and $q\leq p-1$. Using this parametrization, there are different ways to impose the normalization $H(0)=H_0\in M_{d,m}(\R)$. One first observes that the special structure of the polynomials $P$ and $Q$ implies that $H(0)=P(0)^{-1}Q(0) = -(\alpha_{ij,1})_{ij}^{-1}(\kappa_{\nu_1+\ldots+\nu_{i-1}+1,j})_{ij}$. The canonical state space parametrization $(A,B,C)$ given by \cref{canSSMpara} therefore satisfies $H(0)=-CA^{-1}B=H_0$ if one makes the coefficients $\alpha_{ij,1}$ functionally dependent on the free parameters $\alpha_{ij,m}$, $m=1,\ldots\nu_{ij}$ and $b_{ij}$ by setting $\alpha_{ij,1}=-[(\kappa_{\nu_1+\ldots+\nu_{k-1}+1,l})_{kl}H_0^{\sim1}]_{ij}$, where $\kappa_{ij}$ are the entries of the matrix $K$ appearing in \cref{EchelonMCARMA} and $H_0^{\sim 1}$ is a right inverse of $H_0$. Another possibility, which has the advantage of preserving the multi-companion structure of the matrix $A$, is to keep the $\alpha_{ij,1}$ as free parameters, and to restrict some of the entries of the matrix $B$ instead. Since $|\det K| = 1$ and the matrix $T$ is thus invertible, the coefficients $b_{ij}$ can be written as $B=T^{-1}K$. Replacing the $(\nu_1+\ldots+\nu_{i-1}+1,j)$th entry of $K$ by the $(i,j)$th entry of the matrix $-(\alpha_{kl,1})_{kl}H_0$ makes some of the $b_{ij}$ functionally dependent on the entries of the matrix $A$, and results in a state space representation with prescribed Kronecker indices and satisfying $H(0)=H_0$. This latter method has also the advantage that it does not require the matrix $H_0$ to possess a right inverse. In the special case that $d=m$ and $H_0=-\I_d$, it suffices to set $\kappa_{\nu_1+\ldots+\nu_{i-1}+1,j} = \alpha_{ij,1}$.
Examples of normalized low\hyp{}order canonical parametrizations are given in \cref{tablecanparassm,tablecanparamcarma}.
\begin{table}
{\centering
\small
\begin{tabular}{|c|c|c|c|c|}
\hline
$\bnu$	& $n(\bnu)$ 	& $A$ 	& $B$ 	& $C$	\\
\hline
$(1,1)$ 		& $7$			&$\left(\begin{array}{cc}	
        		     			        \vartheta_1 & \vartheta_2\\
							 \vartheta_3 & \vartheta_4
        		     			       \end{array}\right)$		&$\left(\begin{array}{cc}	
        		     			        \vartheta_1 & \vartheta_2\\
							 \vartheta_3 & \vartheta_4
        		     			       \end{array}\right)$	&$\left(\begin{array}{cc}	
        		     			        1 & 0 \\ 0 & 1
        		     			       \end{array}\right)$	\\
\hline
$(1,2)$			& $10$		&$\left(\begin{array}{ccc}
					    \vartheta_1 & \vartheta_2 & 0 \\
					    0 & 0 & 1 \\
					    \vartheta_3 & \vartheta_4 & \vartheta_5
					\end{array}\right)$	& $\left(\begin{array}{cc}
								    \vartheta_1 & \vartheta_2 \\
								    \vartheta_6 & \vartheta_7 \\
								    \vartheta_3+\vartheta_5\vartheta_6 & \vartheta_4+\vartheta_5\vartheta_7
								  \end{array}\right)$ & $\left(\begin{array}{ccc}1 & 0 & 0 \\ 0 & 1 & 0\end{array}\right)$\\
\hline
$(2,1)$			& $11$			&$\left(\begin{array}{ccc}
                0 & 1 & 0 \\
		\vartheta_1 & \vartheta_2 & \vartheta_3 \\
		\vartheta_4 & \vartheta_5 & \vartheta_6
               \end{array}\right)$	& 	$\left(\begin{array}{cc}
		\vartheta_7 & \vartheta_8 \\
		\vartheta_1+\vartheta_2\vartheta_7 & \vartheta_3+\vartheta_2\vartheta_8 \\
		\vartheta_4+\vartheta_5\vartheta_7 & \vartheta_6+\vartheta_5\vartheta_8
		\end{array}\right)$	& $\left(\begin{array}{ccc}1 & 0 & 0 \\ 0 & 0 & 1\end{array}\right)$\\
\hline
$(2,2)$			&$15$		& $\left(\begin{array}{cccc}
       			 		         0 & 1 & 0 & 0 \\
						 \vartheta_1 & \vartheta_2 & \vartheta_3 & \vartheta_4 \\
						 0 & 0 & 0 & 1 \\
						 \vartheta_5 & \vartheta_6 & \vartheta_7 & \vartheta_8  
       			 		         \end{array}\right)$	& $\left(\begin{array}{cc}
									  \vartheta_9 & \vartheta_{10}\\
									   \vartheta_1+\vartheta_{4}\vartheta_{11}+\vartheta_2\vartheta_9 & \vartheta_3+\vartheta_2\vartheta_{10}+\vartheta_4\vartheta_{12}\\
									    \vartheta_{11}	& \vartheta_{12}\\
									    \vartheta_5+\vartheta_8\vartheta_{11}+\vartheta_6\vartheta_9 & \vartheta_7+\vartheta_6\vartheta_{10}+\vartheta_8\vartheta_{12}	
									  \end{array}\right)$ & $\left(\begin{array}{cccc}
												    1 & 0 & 0 & 0 \\ 0 & 0 & 1 & 0
												  \end{array}\right)$\\
\hline
\end{tabular}}
\vspace{.5cm}\\
\caption{Canonical state space realizations $(A,B,C)$ of normalized ($H(0)=-\I_2$) rational transfer functions in $M_2(\R\{z\})$ with different Kronecker indices $\bnu$; the number of parameters, $n(\bnu)$, includes three parameters for a covariance matrix $\SigL $.}
\label{tablecanparassm}
\end{table}

\begin{table}
{\centering
\small
\begin{tabular}{|c|c|c|c|c|}
\hline
$\bnu$	& $n(\bnu)$ 	& $P(z)$ 	& $Q(z)$	& $(p,q)$	\\
\hline
  $(1,1)$ 		& $7$				&$\left(\begin{array}{cc}z-\vartheta_1&-\vartheta_2\\-\vartheta_3&z-\vartheta_4\end{array}\right)$	&$\left(\begin{array}{cc}\vartheta_1&\vartheta_2\\\vartheta_3&\vartheta_4\end{array}\right)$	& $(1,0)$		\\
\hline
    $(1,2)$		& $10$				&$\left(\begin{array}{cc}z-\vartheta_1&-\vartheta_2\\-\vartheta_3&z^2-\vartheta_4z-\vartheta_5\end{array}\right)$	&$\left(\begin{array}{cc}\vartheta_1&\vartheta_2\\\vartheta_6z+\vartheta_3&\vartheta_7z+\vartheta_5\end{array}\right)$	& $(2,1)$		\\
\hline
$(2,1)$			& $11$				&$\left(\begin{array}{cc}z^2-\vartheta_1z-\vartheta_2&-\vartheta_3\\-\vartheta_4z-\vartheta_5&z-\vartheta_6\end{array}\right)$	&$\left(\begin{array}{cc}\vartheta_7z+\vartheta_2&\vartheta_8z+\vartheta_3\\\vartheta_5&\vartheta_6\end{array}\right)$	& $(2,1)$		\\
\hline
$(2,2)$			&$15$				&$\left(\begin{array}{cc}z^2-\vartheta_1z-\vartheta_2&-\vartheta_3z-\vartheta_4\\-\vartheta_5z-\vartheta_6&z^2-\vartheta_7z-\vartheta_8\end{array}\right)$		&$\left(\begin{array}{cc}\vartheta_9z+\vartheta_2&\vartheta_{10}z+\vartheta_4\\\vartheta_{11}z+\vartheta_6&\vartheta_{12}z+\vartheta_8\end{array}\right)$	& $(2,1)$		 \\
\hline
\end{tabular}}
\vspace{.5cm}\\
\caption{Canonical MCARMA realizations $(P,Q)$ with order $(p,q)$ of normalized ($H(0)=-\I_2$) rational transfer functions in $M_2(\R\{z\})$ with different Kronecker indices $\bnu$; the number of parameters, $n(\bnu)$, includes three parameters for a covariance matrix $\SigL $.} 
\label{tablecanparamcarma}
\end{table}

\subsection{A simulation study}
\label{section-Simulation}
We present a simulation study for a bivariate CARMA process with Kronecker indices $(1,2)$, i.\,e.\ CARMA indices $(p,q)=(2,1)$. As the driving L\'evy process we chose a zero\hyp{}mean normal\hyp{}inverse Gaussian (NIG) process $(\Lb(t))_{t\in\R}$. Such processes have been found to be useful in the modelling of stock returns and stochastic volatility, as well as turbulence data \citep[see, e.\,g.,][]{barndorff1997normal,rydberg1997normal}. 
The distribution of the increments $\Lb(t)-\Lb(t-1)$ of a bivariate normal\hyp{}inverse Gaussian L\'evy process is characterized by the density
\begin{equation*}
f_{\operatorname{NIG}}(\boldsymbol{x};\boldsymbol{\mu},\alpha,\boldsymbol{\beta},\delta,\Delta)=\frac{\delta\exp(\delta\kappa)}{2\pi}\frac{\exp(\langle\boldsymbol{\beta}\boldsymbol{x}\rangle)}{\exp(\alpha g(\boldsymbol{x}))}\frac{1+\alpha g(\boldsymbol{x})}{g(\boldsymbol{x})^3},\quad\boldsymbol{x}\in\R^2,
\end{equation*}
where
\begin{equation*}
g(\boldsymbol{x})=\sqrt{\delta^2+\langle\boldsymbol{x}-\boldsymbol{\mu},\Delta(\boldsymbol{x}-\boldsymbol{\mu}\rangle},\quad \kappa^2=\alpha^2-\langle\boldsymbol{\beta},\Delta\boldsymbol{\beta}\rangle>0,
\end{equation*}
and $\boldsymbol{\mu}\in\R^2$ is a location parameter, $\alpha\geq 0$ is a shape parameter, $\boldsymbol{\beta}\in\R^2$ is a symmetry parameter, $\delta\geq 0$ is a scale parameter and $\Delta\in M_2^+(\R)$, $\det\Delta=1$, determines the dependence between the two components of $(\Lb(t))_{t\in\R}$. For our simulation study we chose parameters 
\begin{equation}
\label{NIGparams}
\delta=1,\quad \alpha= 3,\quad \boldsymbol{\beta}=(1,1)^T,\quad \Delta=\left(\begin{array}{cc}5/4 & -1/2 \\ -1/2 & 1\end{array}\right),\quad \boldsymbol{\mu}=-\frac{1}{2\sqrt{31}}(3,2)^T,
\end{equation}
resulting in a skewed distribution with mean zero and covariance $\SigL \approx\left(\begin{array}{cc}0.4751 & -0.1622 \\-0.1622 & 0.3708\end{array}\right)$. A sample of $350$ independent replicates of the bivariate $\text{CARMA}_{1,2}$ process $(\Y(t))_{t\in\R}$ driven by a normal\hyp{}inverse Gaussian L\'evy process $(\Lb(t))_{t\in\R}$ with parameters given in \cref{NIGparams} were simulated on the equidistant time grid $0,0.01,\ldots,2000$ by applying an Euler scheme to the stochastic differential equation \labelcref{eq-ssm-QML} making use of the canonical parametrization given in \cref{tablecanparassm}. For the simulation, the initial value $\X(0)=\bzero_3$ and parameters $\vartheta_{1:7}=(-1, -2, 1, -2, -3, 1, 2)$ was used. Each realization was sampled at integer times ($h=1$), and QML estimates of $\vartheta_1,\ldots,\vartheta_7$ as well as $(\vartheta_8,\vartheta_9,\vartheta_{10})\coloneqq\vech\SigL $ were computed by numerical maximization of the quasi log-likelihood function using a differential evolution optimization routine \citep{price2005differential} in conjunction with a subspace trust-region method 
In \cref{tablesimstudy} the sample means and sampled standard deviations of the estimates are reported. Moreover, the standard deviations were estimated using the square roots of the diagonal entries of the asymptotic covariance matrix \labelcref{asympCov} with $s(L)=\lfloor L/\log L\rfloor^{1/3}$, and the estimates are also displayed in \cref{tablesimstudy}.
\begin{table}
{\centering
\small
\begin{tabular}{|c|c|c|c|c|}
 \hline
parameter	& sample mean 	& bias 		& sample std. dev. & mean est. std. dev.\\
\hline
$\vartheta_1$ 	& -1.0001	& 0.0001	&0.0354		&0.0381	\\
$\vartheta_2$ 	&  -2.0078	& 0.0078	&0.0479		&0.0539	\\
$\vartheta_3$ 	&  1.0051	&-0.0051	&0.1276		&0.1321	\\
$\vartheta_4$ 	&  -2.0068	& 0.0068	&0.1009		&0.1202	\\
$\vartheta_5$ 	&  -2.9988	&-0.0012	&0.1587		&0.1820	\\
$\vartheta_6$ 	&  1.0255	&-0.0255	&0.1285		&0.1382	\\
$\vartheta_7$ 	&  2.0023	&-0.0023	&0.0987		&0.1061	\\
$\vartheta_8$ 	&  0.4723	&-0.0028	&0.0457		&0.0517	\\
$\vartheta_9$ 	&  -0.1654	& 0.0032	&0.0306		&0.0346	\\
$\vartheta_{10}$&  0.3732	& 0.0024	&0.0286		&0.0378	\\
\hline
\end{tabular}}
\vspace{.5cm}\\
\caption{QML estimates for the parameters of a bivariate NIG\hyp{}driven $\text{CARMA}_{1,2}$ process observed at integer times over the time horizon $[0,2000]$. The second column reports the empirical mean of the estimators as obtained from 350 independent paths; the third and fourth columns contain the resulting bias and the sample standard deviation of the estimators, respectively, while the last column reports the average of the expected standard deviations of the estimators as obtained from the asymptotic normality result \cref{theorem-CLTmcarma}.}
\label{tablesimstudy}
\end{table}
One sees that the bias, the difference between the sample mean and the true parameter value, is very small in accordance with the asymptotic consistency of the estimator. Moreover, the estimated standard deviation is always slightly larger than the sample standard deviation, yet close enough to provide a useful approximation for, e.\,g., the construction of confidence regions. In order not to underestimate the uncertainty in the estimate, such a conservative approximation to the true standard deviations is desirable in practice. Overall, the estimation procedure performs very well in the simulation study.

\section*{Acknowledgements}
ES acknowledges financial support from the International Graduate School of Science and Engineering of the Technische Universit\"at M\"unchen. RS is grateful for the support of Deutsche Forschungsgemeinschaft (DFG) via research grant STE 2005/1-1. Both authors acknowledge financial support from the TUM Institute for Advanced Study, funded by the German Excellence Initiative.

{\small
\bibliographystyle{imsart-nameyear}

}

\end{document}